\documentclass[11pt,reqno]{article}
\usepackage{graphicx}
\usepackage{amssymb,amsmath}
\usepackage{amsthm}
\usepackage{color,graphicx}
\usepackage{hyperref}
\usepackage{color}
\usepackage{epstopdf}
\usepackage{caption}
\usepackage{subcaption}
\usepackage[dvips]{epsfig}

\setlength{\textwidth}{16cm}
\setlength{\textheight}{21cm}
\addtolength{\oddsidemargin}{-1cm}
\addtolength{\evensidemargin}{-1cm}

\newcommand{\be}{\begin{equation}}

\newcommand{\ee}{\end{equation}}


\numberwithin{equation}{section}
\numberwithin{figure}{section}

\newtheorem{theorem}{Theorem}[section]
\newtheorem{proposition}[theorem]{Proposition}
\newtheorem{remark}[theorem]{Remark}
\newtheorem{lemma}[theorem]{Lemma}
\newtheorem{corollary}[theorem]{Corollary}

\newtheorem{definition}[theorem]{Definition}


\begin{document}

\title{\bf Periodic waves in the fractional modified \\ Korteweg--de Vries equation}

\author{\em F\'{a}bio Natali$^1$, Uyen Le$^2$, and Dmitry E. Pelinovsky$^2$ \\
	{\small $^1$ Departamento de Matem\'{a}tica - Universidade Estadual de Maring\'{a},} \\
	{\small Avenida Colombo 5790, CEP 87020-900, Maring\'{a}, PR, Brazil} \\
	{\small $^2$ Department of Mathematics and Statistics, McMaster University,}\\ {\small Hamilton, Ontario, Canada, L8S 4K1}}

\maketitle

\begin{abstract}
Periodic waves in the modified Korteweg-de Vries (mKdV) equation are revisited in the setting of the fractional Laplacian. Two families of solutions in the local case are given by the sign-definite dnoidal and sign-indefinite cnoidal solutions. Both solutions can be characterized in the general fractional case as global minimizers of the quadratic part of the energy functional subject to the fixed $L^4$ norm: the sign-definite (sign-indefinite) solutions are obtained in the subspace of even (odd) functions. Morse index is computed for both solutions and the spectral stability criterion is derived. 
We show numerically that the family of sign-definite solutions has a generic 
fold bifurcation for the fractional Laplacian of lower regularity 
and the family of sign-indefinite solutions has a generic symmetry-breaking 
bifurcation both in the fractional and local cases.
\end{abstract}


\section{Introduction}

The purpose of this work is to study existence, variational characterization, and bifurcations of periodic solutions of the following stationary equation:
\begin{equation}
\label{ode-stat}
D^{\alpha} \psi + c \psi + b = 2 \psi^3,
\end{equation}
where $\psi(x) : \mathbb{T} \mapsto \mathbb{R}$ is the wave profile on a circle $\mathbb{T} : = [-\pi,\pi]$, $(c,b)$ are real parameters, and $D^{\alpha}$ is the fractional Laplacian on $\mathbb{T}$ defined via Fourier series by 
\begin{equation*}
\psi(x) = \sum_{n \in \mathbb{Z}} \psi_n e^{inx}, \quad 
(D^{\alpha} \psi)(x) = \sum_{n \in \mathbb{Z}} |n|^{\alpha} \psi_n e^{inx}.
\end{equation*}
Thanks to the scaling transformation for the cubic nonlinearity, the fundamental period of the wave has been scaled to $2\pi$. In addition, we are interested in the simplest periodic waves with the single-lobe profile according to the following definition.

\begin{definition}\label{defilobe}
	We say that the periodic wave satisfying the stationary equation (\ref{ode-stat})
	has a single-lobe profile $\psi$ if it admits only one isolated maximum and minimum on $\mathbb{T}$.
\end{definition}

The stationary equation (\ref{ode-stat}) arises in the context of  
the fractional mKdV (modified Korteweg--de Vries) equation written in the form:
\begin{equation}\label{rDE}
u_t + 6u^2 u_x-(D^{\alpha}u)_x=0,
\end{equation}
with $u(t,x) : \mathbb{R} \times \mathbb{T} \mapsto \mathbb{R}$.
Traveling waves of the form $u(t,x) = \psi(x-c t)$ 
satisfy the stationary equation (\ref{ode-stat}), where $c$ is 
the wave speed and $b$ is an integration constant. The fractional mKdV equation (\ref{rDE}) admits formally the following conserved quantities:
\begin{equation}\label{Eu}
E(u) = \frac{1}{2} \int_{-\pi}^{\pi} \left[ (D^{\frac{\alpha}{2}}u)^2 - u^4 \right] dx,
\end{equation}
\begin{equation}\label{Fu}
F(u)=\frac{1}{2}\int_{-\pi}^{\pi}u^2dx,
\end{equation}
and
\begin{equation}\label{Mu}
M(u)=\int_{-\pi}^{\pi}u\,dx,
\end{equation}
which have meaning of energy, momentum, and mass, respectively.
The stationary equation (\ref{ode-stat}) is
the Euler--Lagrange equation for the action functional,
\begin{equation}\label{lyafun}
G(u)=E(u)+c F(u)+  b M(u),
\end{equation}
so that $G'(\psi)=0$. The Hessian operator from the action functional (\ref{lyafun}) yields the linearized operator around the wave $\psi$ in the form:
\begin{equation}\label{operator}
\mathcal{L} := G''(\psi) = D^{\alpha} + c - 6\psi^2.
\end{equation}
In addition to questions of the existence, we study the spectral stability of the periodic wave with the spatial profile $\psi$ in the time evolution of the fractional mKdV equation (\ref{rDE}) according to the following definition.

\begin{definition}
	\label{defspe} The periodic wave is said to be spectrally stable 
	with respect to perturbations of the same period if	$\sigma(\partial_x \mathcal{L}) \subset i\mathbb R$ in $L^2(\mathbb{T})$.
	Otherwise, it is said to be {\it spectrally unstable}.
\end{definition}

The fractional mKdV equation (\ref{rDE}) appears to be a generic model 
for one-dimensional long waves with weak dispersion \cite{SautPilod,SautSoliton}. For $\alpha = 2$, 
the mKdV equation is completely integrable \cite{AKNS}. 
For $\alpha = 1$, it is referred to as the modified Benjamin--Ono equation \cite{Kalish}.

Global well-posedness results for the initial data in $H^s(\mathbb{R})$ with $s>\frac{1}{4}$ and in $H^{s}(\mathbb{T})$ with $s\geq\frac{1}{2}$ 
were obtained for $\alpha = 2$  in \cite{CKSTT}. Local well-posedness results for initial data in $H^{s}(\mathbb{R})$ with $s\geq\frac{1}{2}$ were obtained for $\alpha = 1$ in \cite{KT}. Energy and momentum are conserved 
in the time evolution of such solutions. Local solutions 
with sufficiently large initial data in $H^{\frac{1}{2}}(\mathbb{R})$ blow up in a finite time \cite{KMR,Martel-Pilod}.

Periodic waves of the stationary equation (\ref{ode-stat}) were only studied in the local case of $\alpha = 2$. There exist two families 
of periodic solutions for $b = 0$ with the single-lobe profile:  sign-definite solutions are expressed by the dnoidal elliptic functions and 
sign-indefinite solutions are expressed by the cnoidal elliptic functions. 
For $b \neq 0$, all periodic waves can be expressed as a rational 
function of Jacobian elliptic functions \cite{CPgardner}. Spectral and orbital stability of periodic waves in the local case of $\alpha = 2$ was also considered in the literature.

Employing the arguments in \cite{bona} and \cite{W1,W2}, orbital stability of sign-definite dnoidal waves with $b = 0$ was proven in \cite{angulo1}. Spectral stability of sign-indefinite cnoidal waves with $b = 0$ was studied 
in \cite{DK} by using the count of negative eigenvalues of the operator $\mathcal{L}$ restricted to the orthogonal complement of ${\rm span}(1,\psi)$ (also also \cite{haragus,Pel}). It was discovered in 
\cite{DK} that the cnoidal waves were spectrally stable for smaller speeds $c$ and spectrally unstable for larger speeds $c$. Spectral and orbital stability and instability of the cnoidal waves was proven in \cite{AN1} by adopting the arguments of \cite{lin} in the periodic context
and employing the approach in \cite{henry} based on the existence of a sufficiently smooth data-to-solution-map. Orbital stability of a particular family of positive periodic waves of the dnoidal type with $b\neq0$ was proven in \cite{AP2} by adopting the arguments of \cite{grillakis1}.

In the limit of $c \to \infty$, periodic waves with the single-lobe profile on the fixed circle $\mathbb{T}$ concentrate near centers of symmetry and approach the solitary waves. With the scaling transformation, 
\begin{equation}\label{soliton}
\psi(x) = c^{\frac{1}{2}} Q(c^{\frac{1}{\alpha}} x), \quad x \in \mathbb{R},
\end{equation}  
for $c > 0$, the solitary wave profile $Q(x) :\mathbb{R} \mapsto \mathbb{R}$ satisfies the $c$-independent problem 
\begin{equation}\label{soliton-ode}
D^{\alpha} Q + Q = 2 Q^3.
\end{equation} 
Existence and uniqueness (modulus translations) of solitary waves with the spatial profile $Q$ satisfying equation (\ref{soliton-ode}) was shown in \cite{FL2013} based on their variational characterization 
as minimizers of the Gagliardo--Nirenberg--type inequality considered in \cite{W3,W4}:
\begin{equation}
\label{soliton-variational}
\| u \|^4_{L^4(\mathbb{R})} \leq C_{\alpha} \| D^{\frac{\alpha}{2}} u \|_{L^2(\mathbb{R})}^{\frac{2}{\alpha}} \| u \|_{L^2(\mathbb{R})}^{4-\frac{2}{\alpha}}, \quad u \in H^{\frac{\alpha}{2}}(\mathbb{R}),
\end{equation}
where $C_{\alpha} > 0$ is the $u$-independent constant. 
Solitary waves satisfying (\ref{soliton-ode}) give the best value for 
$C_{\alpha}$ which saturates the Gagliardo--Nirenberg--type inequality (\ref{soliton-variational}). 

For $\alpha=2$, the classical arguments in \cite{Albert,bona,grillakis1} can be used to prove the orbital stability of solitary waves in the energy space $H^1(\mathbb{R})$. 
Spectral and orbital stability of solitary waves in the general case $\alpha\in\left(\frac{1}{2},2\right)$ was considered in \cite{A}
based on the dependence of the momentum $F(\psi) = c^{1-\frac{1}{\alpha}} F(Q)$
on the wave speed $c > 0$. It was shown that the solitary waves were unstable if $\alpha\in \left(\frac{1}{2},1\right)$ and stable if $\alpha\in (1,2)$ in agreement with increasing and decreasing 
dependence of $F(\psi)$ versus $c$ respectively. The critical case $\alpha=1$ was inconclusive because $\frac{d}{dc}F(\psi)=0$ in this case.
For the critical case, existence of blow up solutions with minimal mass was proven in \cite{Martel-Pilod} by combining sharp energy estimates and a refined localized argument from \cite{KMR}. Hence, the solitary waves 
are unstable for $\alpha = 1$.

The previous review of literature shows that the stationary equation (\ref{ode-stat}) in the periodic domain $\mathbb{T}$ has been unexplored for $\alpha < 2$. Compared to the cubic case, a similar problem with the quadratic nonlinearity has been recently studied in many aspects, e.g. 
existence and stability of traveling periodic waves were analyzed by using perturbative \cite{J}, variational \cite{Bronski,BC,hur}, and fixed-point \cite{C,lepeli} methods. 

The standard approach to characterize the spectral and orbital stability of periodic waves with respect to perturbations of the same period is based on the minimization of energy $E(u)$ subject to the fixed momentum $F(u)$ and mass $M(u)$. However, parameters $(c,b)$ appear to be Lagrange multipliers of the action functional (\ref{lyafun}) and the smoothness of the momentum and mass with respect to Lagrange multipliers cannot be taken for granted (compared to what was done in \cite{hur}). 

In order to resolve problems of the variational characterization of the traveling periodic waves in the fractional KdV equation (with quadratic nonlinearity), two new approaches were recently developed. 

In \cite{stefanov}, the periodic waves with the single-lobe profile were constructed by minimizing the energy $E(u)$ subject to only one constraint of the fixed momentum $F(u)$. It was shown that such minimizers were 
degenerate only up to the translation symmetry and were spectrally stable.

From a different point of view, we characterized the traveling periodic waves in our previous work \cite{NPU} by minimizing the quadratic part of the action functional $G(u)$ subject to the fixed cubic part of the energy $E(u)$ and the zero mean constraint $M(u) = 0$. This approach combined with the Galilean transformation allowed us to represent all possible periodic waves of the single-lobe profile $\psi$ and to derive a simple stability criterion from the derivative of the momentum $F(\psi)$ with respect to the wave speed $c$. 

When ideas of \cite{NPU} are extended to the cubic nonlinearity 
in the framework of the stationary equation (\ref{ode-stat}), we face the difficulty 
that the Galilean transformation generates a quadratic nonlinear term 
and connects solutions of the fractional mKdV equation to solutions of the fractional Gardner equation. As a result, we are not able yet to characterize all possible periodic waves of the single-lobe profile 
in the stationary equation (\ref{ode-stat}). Instead, we shall study the two particular families of solutions which correspond to $b = 0$ and generalize the sign-definite dnoidal and sign-indefinite cnoidal elliptic solutions of the local case $\alpha = 2$. Both families are obtained as minimizers of the quadratic part of the action functional $G(u)$ subject to the fixed quartic part of the energy, but one family is obtained in the subspace of even functions and the other family is obtained in the subspace of odd functions. 
For the purpose of simplicity, we refer to the first family as {\em the even periodic waves} and to the second family as {\em the odd periodic waves}. 

The following two theorems present the main results of this paper. In what follows, we write $H^s_{\rm per}$ instead of $H^s_{\rm per}(\mathbb{T})$.
The subspace of odd (even) functions in $L^2$ is denoted by $L^2_{\rm odd}$ ($L^2_{\rm even}$). Similarly, the subspace of odd (even) periodic functions in $H^s_{\rm per}$ is denoted by $H^s_{\rm per, odd}$ ($H^s_{\rm per, even}$).

\begin{theorem}[Odd periodic wave]
	\label{theorem-main-1}
	Fix $\alpha \in \left(\frac{1}{2},2 \right]$. For every $c_0 \in (-1,\infty)$, there exists a solution to the stationary 
 equation (\ref{ode-stat}) with $b = 0$ and the odd, single-lobe profile $\psi_0$, which is obtained from a constrained minimizer of the following variational problem:
	\begin{equation}
	\label{minimizer-1}
	\inf_{u\in H_{\rm per, odd}^{\frac{\alpha}{2}}} \left\{ \int_{-\pi}^{\pi} \left[ (D^{\frac{\alpha}{2}}u)^2 + c_0 u^2 \right] dx : \quad
	\int_{-\pi}^{\pi} u^4 dx = 1 \right\}.
	\end{equation}
There exists a $C^1$ mapping $c \mapsto \psi(\cdot,c) \in H^{\alpha}_{\rm per, odd}$ in a local neighborhood of $c_0$ such that $\psi(\cdot,c_0) = \psi_0$. The spectrum of $\mathcal{L}$ in $L^2(\mathbb{T})$
includes two negative eigenvalues and if $1 \in {\rm Range}(\mathcal{L})$, 
a simple zero eigenvalue. Assuming $1 \in {\rm Range}(\mathcal{L})$ and setting $\sigma_0 := \langle \mathcal{L}^{-1} 1, 1 \rangle$, the periodic wave with the profile $\psi_0$ is spectrally stable if 
\begin{equation}
\sigma_0 \leq 0, \quad \frac{d}{dc} \| \psi \|^2_{L^2} \geq 0
\end{equation}
and is spectrally unstable with exactly one real, positive eigenvalue of $\partial_x \mathcal{L}$ in $L^2(\mathbb{T})$ if
\begin{equation}
{\rm either} \;\; \sigma_0 \frac{d}{dc} \| \psi \|_{L^2}^2 > 0 
\quad \mbox{\rm or} \;\; \sigma_0 = 0, \;\; \frac{d}{dc} \| \psi \|_{L^2}^2 < 0, \quad \mbox{\rm or} \;\; \sigma_0 > 0, \;\; \frac{d}{dc} \| \psi \|_{L^2}^2 = 0.
\end{equation}
If $1 \notin {\rm Range}(\mathcal{L})$, then
the periodic wave is spectrally unstable with exactly one real positive eigenvalue of $\partial_x \mathbb{L}$ in $L^2(\mathbb{T})$ if
\begin{equation}
\frac{d}{dc} \| \psi \|^2_{L^2} \geq 0.
\end{equation}
\end{theorem}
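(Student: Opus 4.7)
The plan is to establish in order: existence of the odd single-lobe minimizer; smoothness of the one-parameter family $c \mapsto \psi(\cdot,c)$; computation of the Morse index of $\mathcal{L}$ together with the simplicity of its zero eigenvalue; and the spectral stability count for $\partial_x \mathcal{L}$ using the Hamiltonian index formula in the spirit of Lin and Kapitula--Kevrekidis--Sandstede. For existence in (\ref{minimizer-1}), the direct method of calculus of variations works: coercivity on $H^{\alpha/2}_{\rm per, odd}$ is automatic, since the smallest eigenvalue of $D^\alpha + c_0$ on the odd subspace is $1 + c_0 > 0$, realized by $\sin x$. A minimizing sequence is weakly precompact, and the compact embedding $H^{\alpha/2}_{\rm per} \hookrightarrow L^4_{\rm per}$ (valid for $\alpha > \tfrac{1}{2}$) passes the $L^4$ constraint to the weak limit. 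The Euler--Lagrange equation $D^\alpha \psi_0 + c_0 \psi_0 = 2\lambda \psi_0^3$ becomes (\ref{ode-stat}) with $b=0$ after the scaling $\psi_0 \mapsto \lambda^{1/2}\psi_0$. The single-lobe property follows by a symmetric-decreasing rearrangement on $[0,\pi]$ adapted to the fractional Laplacian, which strictly reduces the quadratic form while preserving $L^4$; odd extension then gives a single-lobe minimizer on $\mathbb{T}$.

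For the $C^1$ continuation I would apply the implicit function theorem to $F(\psi,c) := D^\alpha \psi + c\psi - 2\psi^3$ viewed as a map $H^\alpha_{\rm per, odd} \times \mathbb{R} \to L^2_{\rm per, odd}$. The Fr\'echet derivative $\partial_\psi F = \mathcal{L}$ restricted to the odd subspace has trivial kernel, since the translational zero mode $\psi_0'$ lies in $H^\alpha_{\rm per, even}$ and any additional odd kernel element would contradict the strict minimization. For the Morse index of $\mathcal{L}$ on $L^2(\mathbb{T})$ I would combine two observations. In the odd sector, $\mathcal{L}\psi_0 = -4\psi_0^3$ (from the stationary equation) gives $\langle \mathcal{L}\psi_0, \psi_0\rangle = -4\|\psi_0\|_{L^4}^4 < 0$, so $n_-(\mathcal{L}|_{\rm odd}) \geq 1$; the constrained minimization bounds this above by $1$, forcing equality. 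In the even sector, $\psi_0' \in \ker \mathcal{L}$ has precisely two zeros on $\mathbb{T}$, and an oscillation-type argument (made rigorous in the nonlocal setting by continuous deformation from $\alpha = 2$) gives $n_-(\mathcal{L}|_{\rm even}) = 1$; hence $n_-(\mathcal{L}) = 2$. Simplicity of the zero eigenvalue under $1 \in \mathrm{Range}(\mathcal{L})$ follows from the Fredholm alternative combined with the $C^1$ continuation.

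For the stability criterion, any eigenfunction of $\partial_x \mathcal{L}$ with nonzero eigenvalue lies automatically in $L^2_0 := \{v \in L^2(\mathbb{T}) : \int_{-\pi}^{\pi} v\,dx = 0\}$, on which $\partial_x$ is invertible and defines a symplectic structure. The number $n_u$ of real positive eigenvalues of $\partial_x \mathcal{L}$ is then computed by the standard Hamiltonian index formula
\[
n_u = n_-\bigl(\mathcal{L}|_{L^2_0 \cap \{\psi_0\}^\perp}\bigr) - n_-(\mathcal{D}),
\]
where $\mathcal{D}$ is the symplectic-pairing matrix on the generalized kernel of $\partial_x \mathcal{L}$. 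Reducing $\mathcal{L}$ from $L^2(\mathbb{T})$ to $L^2_0$ corresponds to imposing the single constraint $\{1\}^\perp$: if $1 \in \mathrm{Range}(\mathcal{L})$, the sign of $\sigma_0 = \langle \mathcal{L}^{-1}1, 1\rangle$ determines whether the Morse index drops by one (when $\sigma_0 < 0$) or remains the same (when $\sigma_0 > 0$), with special handling when $\sigma_0 = 0$. The further restriction to $\{\psi_0\}^\perp$ is governed by $\langle \mathcal{L}^{-1}\psi_0, \psi_0\rangle$, which by differentiating (\ref{ode-stat}) in $c$ (giving $\mathcal{L}\partial_c \psi = -\psi$) equals $-\tfrac{1}{2}\frac{d}{dc}\|\psi\|_{L^2}^2$. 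Assembling these contributions case by case reproduces the stated criteria.

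The principal technical obstacle is the rigorous count of $n_-(\mathcal{L})$ and the simplicity of its zero eigenvalue in the nonlocal regime $\alpha < 2$, since classical Sturm--Liouville oscillation theory no longer applies to $D^\alpha$. I plan to overcome this by combining the variational characterization of $\psi_0$ (which controls the contribution from the odd sector) with a deformation argument in $\alpha$ from the local case $\alpha = 2$, where the elliptic-function structure of $\psi_0$ allows explicit spectral computations. The degenerate cases $\sigma_0 = 0$ and $1 \notin \mathrm{Range}(\mathcal{L})$ require separate treatment: for the latter, the symplectic-pairing matrix $\mathcal{D}$ acquires a different structure reflecting the obstruction to inverting $\mathcal{L}$ on $1$, which accounts for the weaker conclusion stated in that part of the theorem.
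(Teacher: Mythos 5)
Your overall architecture matches the paper's: variational existence via the direct method with the compact embedding $H^{\alpha/2}_{\rm per}\hookrightarrow L^4$, single-lobe profile via fractional rearrangement, continuation by the implicit function theorem, the identity $\mathcal{L}\partial_c\psi=-\psi$ giving $\langle\mathcal{L}^{-1}\psi,\psi\rangle=-\tfrac12\tfrac{d}{dc}\|\psi\|^2_{L^2}$, and a constrained-operator index count with the $2\times 2$ matrix built from $\sigma_0$ and the slope. However, there is a genuine gap at the heart of the spectral analysis, namely the even-sector Morse index and the nondegeneracy of the kernel. Your proposal to "make rigorous" the oscillation argument by a continuous deformation in $\alpha$ from the local case $\alpha=2$ does not work as stated: to conclude that $n_-(\mathcal{L}|_{\rm even})=1$ persists along the deformation you must rule out an even eigenvalue crossing through zero, i.e.\ you must know $z(\mathcal{L}|_{L^2_{\rm even}})=1$ for all intermediate $\alpha$ --- which is exactly the nondegeneracy you are trying to establish, so the argument is circular. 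Such crossings are not hypothetical: the paper's numerics show that for the nearby family with $b\neq 0$ the operator does acquire a double zero eigenvalue at some $\hat c_*$ with $n(\mathcal{L})$ jumping from $2$ to $1$. The paper instead invokes the nonlocal Sturm-type nodal bounds of Hur--Johnson (Propositions \ref{prop-nodal}--\ref{prop-odd}), valid for each fixed $\alpha\in(\tfrac12,2]$, which bound the number of sign changes of the $n$-th eigenfunction of $D^\alpha+V$; combined with the parity structure of the $\pi$-periodic potential with respect to $x=0$ and $x=\pm\pi/2$, these yield $n(\mathcal{L})=2$ and the dichotomy $z(\mathcal{L})=1$ or $2$ according to whether $1\in{\rm Range}(\mathcal{L})$. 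Some substitute for this nonlocal oscillation theory is indispensable.

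The same gap infects your justification of the implicit function theorem step: you assert that an odd kernel element of $\mathcal{L}$ "would contradict the strict minimization," but a constrained minimizer can perfectly well be degenerate in directions tangent to the constraint set (the one-constraint variational characterization only gives $n_-(\mathcal{L}|_{L^2_{\rm odd}})\le 1$, not triviality of the kernel). The paper derives $z(\mathcal{L}|_{L^2_{\rm odd}})=0$ from the nodal-parity analysis (an odd eigenfunction with four nodes must sit at a positive eigenvalue since $0$ is already the third eigenvalue), and this is what licenses Lemma \ref{prop-continuation}. Finally, a caution on the stability count: if you use the full Hamiltonian index formula $n_u=n_-(\mathcal{L}|_{\{1,\psi\}^\perp})-n_-(\mathcal{D})$ you must be careful in the regime $\sigma_0>0$, $\tfrac{d}{dc}\|\psi\|^2_{L^2}<0$, where $n_-(\mathcal{L}|_{\{1,\psi\}^\perp})=2$ and a Krein-signature computation would be required; the theorem deliberately excludes this case, and the paper records it as inconclusive in Remark \ref{rem-stability}.
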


\begin{remark}
	If $\sigma_0 = 0$, the odd periodic wave of Theorem \ref{theorem-main-1} undertakes the stability bifurcation, which also results in the bifurcation of new solutions in the stationary equation (\ref{ode-stat}) with $b \neq 0$. The stability bifurcation was first discovered in \cite{DK} for $\alpha = 2$. We show numerically that this bifurcation is generic for every $\alpha \in \left(\frac{1}{2},2\right)$.
\end{remark}

\begin{remark}
	Based on numerical studies, we conjecture that the case $1 \notin {\rm Range}(\mathcal{L})$ is impossible for the odd periodic wave in Theorem \ref{theorem-main-1} 
	for every $\alpha \in \left(\frac{1}{2},2\right]$ and every $c \in (-1,\infty)$. Nevertheless, the case $1 \notin {\rm Range}(\mathcal{L})$ is observed 
	for the new solutions bifurcating from the odd periodic wave in Theorem \ref{theorem-main-1}.
\end{remark}

\begin{theorem}[Even periodic wave]
	\label{theorem-main-2}
	Fix $\alpha \in \left(\frac{1}{2},2 \right]$. For every $c_0 \in \left(\frac{1}{2},\infty \right)$, there exists a solution to the stationary 
	equation (\ref{ode-stat}) with $b = 0$ and the even, single-lobe profile $\psi_0$, which is obtained from a constrained minimizer of the following variational problem:
	\begin{equation}
	\label{minimizer-2}
	\inf_{u\in H_{\rm per, even}^{\frac{\alpha}{2}}} \left\{ \int_{-\pi}^{\pi} \left[ (D^{\frac{\alpha}{2}}u)^2 + c_0 u^2 \right] dx : \quad
	\int_{-\pi}^{\pi} u^4 dx = 1 \right\}.
	\end{equation}
The spectrum of $\mathcal{L}$ in $L^2(\mathbb{T})$
	includes one simple negative eigenvalue and if $1 \in {\rm Range}(\mathcal{L})$, a simple zero eigenvalue. 
With the transformation, 
\begin{equation}
\psi_0(x) = a_0 + \phi_0(x), \quad 
a_0 := \frac{1}{2\pi} \int_{-\pi}^{\pi} \psi_0(x) dx, \quad \omega_0 := c_0 - 6a_0^2,
\end{equation}
assuming $\omega_0 \in (-1,\infty)$, 
there exists a $C^1$ mapping $(\omega,a) \mapsto \phi(\cdot,\omega,a) \in H^{\alpha}_{\rm per, even}$ in a local neighborhood of $(\omega_0,a_0)$ such that $\phi(\cdot,\omega_0,a_0) = \phi_0$ and the mean value of $\phi$ is zero. 	The periodic wave $\psi_0$ is spectrally stable if 
	\begin{equation}
\frac{\partial}{\partial \omega} \| \phi \|^2_{L^2} \geq 0
	\end{equation}
and is spectrally unstable with exactly one real, positive eigenvalue of $\partial_x \mathcal{L}$ in $L^2(\mathbb{T})$ if
	\begin{equation}
\frac{\partial}{\partial \omega} \| \phi \|^2_{L^2} < 0.
	\end{equation}
\end{theorem}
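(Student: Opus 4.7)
The plan is to follow the blueprint of Theorem \ref{theorem-main-1}, with the essential new ingredient being the substitution $\psi = a + \phi$ that promotes the mean $a$ to a second parameter and reduces the stationary problem to a fractional Gardner-type equation.

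I would first establish existence of the minimizer in (\ref{minimizer-2}) by the direct method: for $\alpha > \frac{1}{2}$ the embedding $H^{\alpha/2}_{\rm per} \hookrightarrow L^4_{\rm per}$ is compact, so a minimizing sequence converges (up to a subsequence) weakly in $H^{\alpha/2}$ and strongly in $L^4$, and the quadratic functional is coercive for $c_0 > 0$ and weakly lower semicontinuous. The threshold $c_0 > \frac{1}{2}$ is exactly the one at which the constant solution $\psi \equiv (c_0/2)^{1/2}$ of (\ref{ode-stat}) with $b = 0$ becomes a saddle under first-mode perturbations, since the Hessian $D^\alpha - 2c_0$ acquires the negative eigenvalue $1 - 2c_0$ at the mode $\cos x$; for $c_0 > \frac{1}{2}$ the infimum is therefore not realized by a constant, and a symmetric-decreasing rearrangement on $\mathbb{T}$ (preserving $L^2$ and $L^4$ while lowering the $H^{\alpha/2}$ seminorm) forces the single-lobe profile. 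A Lagrange multiplier and a rescaling then yield the even, single-lobe profile $\psi_0$ solving (\ref{ode-stat}) with $b = 0$. For the spectrum of $\mathcal{L}$, the identity $\mathcal{L}\psi_0 = -4\psi_0^3$ gives $\langle \mathcal{L}\psi_0, \psi_0\rangle < 0$, so there is at least one negative eigenvalue; since $\psi_0$ is a constrained minimizer in $L^2_{\rm even}$ with one scalar constraint, the Morse index in $L^2_{\rm even}$ is at most (and hence exactly) one. In $L^2_{\rm odd}$, translation invariance gives $\mathcal{L}\psi_0' = 0$, and a fractional Courant-type nodal argument in the spirit of Frank--Lenzmann identifies $\psi_0'$ as the ground state in $L^2_{\rm odd}$, so $\mathcal{L}|_{L^2_{\rm odd}} \geq 0$. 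Simplicity of the zero eigenvalue, under the running assumption $1 \in {\rm Range}(\mathcal{L})$, then follows because this assumption forces $\mathcal{L}|_{L^2_{\rm even}}$ to be injective.

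For the $C^1$ branch, I substitute $\psi = a + \phi$ with $\phi$ of zero mean, rewriting (\ref{ode-stat}) as $D^\alpha\phi + \omega\phi - 6a\phi^2 - 2\phi^3 = \kappa$ with $\omega = c - 6a^2$ and $\kappa$ a real constant absorbing $b$ that is fixed by the zero-mean condition. I apply the implicit function theorem to the augmented map $(\phi,\kappa) \mapsto (D^\alpha\phi + \omega\phi - 6a\phi^2 - 2\phi^3 - \kappa,\ \overline{\phi})$ from $H^\alpha_{\rm per, even}\times\mathbb{R}$ to $L^2_{\rm per, even}\times\mathbb{R}$, where $\overline{\phi}$ denotes the mean. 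The linearization at $(\phi_0,\kappa_0)$ is the block operator sending $(\phi,\kappa)$ to $(\mathcal{L}\phi - \kappa,\ \overline{\phi})$; the condition $\omega_0 \in (-1,\infty)$ ensures $D^\alpha + \omega_0$ is positive on zero-mean functions, and combined with the spectral description above this block operator is bijective. The IFT then delivers the desired $C^1$ mapping $(\omega,a) \mapsto \phi(\cdot, \omega, a)$.

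Finally, for spectral stability I would adapt the instability-index count to the $(\omega,a)$ coordinates. Differentiating the profile equation in $\omega$ at fixed $a$ gives $\mathcal{L}\partial_\omega\phi = -\phi + \partial_\omega\kappa$, so that the slope $\partial_\omega\|\phi\|_{L^2}^2$ controls the Jordan block of $\partial_x\mathcal{L}$ at the zero eigenvalue generated by translation. Since here the Morse index of $\mathcal{L}$ is $1$ rather than $2$ as in Theorem \ref{theorem-main-1}, the Kapitula--Kevrekidis--Sandstede instability index formula yields exactly one real positive eigenvalue of $\partial_x\mathcal{L}$ when $\partial_\omega\|\phi\|_{L^2}^2 < 0$, and spectral stability when the slope is non-negative. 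The principal obstacle I foresee is the $L^2_{\rm odd}$ Morse count of $\mathcal{L}$ for $\alpha \in (\frac{1}{2}, 2)$: no Sturm oscillation theorem is available on $\mathbb{T}$ in the fractional setting, so I would either invoke a fractional nodal-domain theorem based on positivity of the heat semigroup of $\mathcal{L}$, or, as a fallback, argue by continuation from the explicit $\alpha = 2$ case while tracking that no eigenvalue crosses zero along the path.
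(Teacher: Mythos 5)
Your overall blueprint is the paper's: direct minimization with the constant solution excluded for $c_0>\tfrac12$, the one-constraint Morse count giving $n(\mathcal{L})=1$, the decomposition $\psi=a+\phi$ with the two independent parameters $(\omega,a)$ and an implicit-function-theorem continuation on the zero-mean subspace, and a two-constraint index count for stability. Two steps, however, are asserted where the real work lies, and as written neither follows from what you have set up.

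First, the bijectivity of your augmented linearization $(\phi,\kappa)\mapsto(\mathcal{L}\phi-\kappa,\overline{\phi})$ on even functions is \emph{equivalent} to $\sigma_0:=\langle\mathcal{L}^{-1}1,1\rangle\neq 0$ (if $\sigma_0=0$ then $(\mathcal{L}^{-1}1,1)$ lies in its kernel), and positivity of $D^{\alpha}+\omega_0$ on mean-zero functions does not by itself give this. The paper obtains it from the Pohozhaev-type identity $\mathcal{B}_{\omega}(\phi)=\int\phi^4+3a\int\phi^3$, which for $\omega>-1$ yields $\langle\mathcal{L}|_{X_0}\phi,\phi\rangle\leq-2\int_{-\pi}^{\pi}\phi^4\,dx<0$, hence $n(\mathcal{L}|_{X_0})=n(\mathcal{L})=1$, hence $n_0=z_0=0$ in the index count and $\sigma_0>0$ (Lemma \ref{lemma-characterization-even} and Remark \ref{remark-sigma-0}). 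This is the only place where $\omega_0\in(-1,\infty)$ genuinely enters; you name the hypothesis but not the mechanism. Second, in the stability count you cannot simply quote the instability index formula with the slope $\partial_\omega\|\phi\|_{L^2}^2$: unlike the odd wave, here $\langle\mathcal{L}^{-1}1,\psi\rangle\neq 0$, so the off-diagonal entries of the constraint matrix $D(0)$ do not vanish by parity and could in principle contaminate the criterion with $\partial_\omega\beta$. The paper's proof of Theorem \ref{theorem-stability-even} computes $\langle\mathcal{L}^{-1}1,\phi\rangle=-\sigma_0\,\partial_\omega\beta$ and $\langle\mathcal{L}^{-1}\phi,\phi\rangle=-\langle\phi,\partial_\omega\phi\rangle+\sigma_0(\partial_\omega\beta)^2$, using the identity $6\langle\phi^2,\partial_\omega\phi\rangle+\langle\phi,\partial_a\phi\rangle=0$ obtained by cross-projecting (\ref{range-3-tilde}) and (\ref{range-4-tilde}); only then do the cross terms cancel to give ${\rm det}\,D(0)=-\sigma_0\langle\phi,\partial_\omega\phi\rangle$, and only because $\sigma_0>0$ does the sign of the determinant reduce to the sign of the partial derivative $\partial_\omega\|\phi\|_{L^2}^2$ at fixed $a$. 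Without these cancellations your final claim is unproven; with them, your argument becomes the paper's. (A smaller point: your assertion that $1\in{\rm Range}(\mathcal{L})$ ``forces $\mathcal{L}|_{L^2_{\rm even}}$ to be injective'' also needs the nodal argument of Lemma \ref{lemma-index-even} — an even kernel element would have to be orthogonal to both $1$ and $\psi^3$, which its two symmetric nodes forbid.)
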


\begin{remark}
We derive the criterion for $1 \notin {\rm Range}(\mathcal{L})$, in which case $\mathcal{L}$ has the double zero eigenvalue and the even periodic wave of Theorem \ref{theorem-main-2} undertakes the fold bifurcation. Two solutions of the stationary equation (\ref{ode-stat}) with $b = 0$ coexist for the same value of $c$ near the fold bifurcation.  We show numerically that the fold bifurcation is generic for every $\alpha \in \left(\frac{1}{2},\alpha_0 \right)$, where 
$$
\alpha_0 := \frac{\log 8 - \log 5}{\log 2} \approx 0.6781. 
$$
If $\alpha \in \left(\frac{1}{2},\alpha_0 \right)$, two solutions 
with the even, single-lobe profile exist for the same value of $c \in (c_0,\frac{1}{2})$ with $c_0 \in \left(0,\frac{1}{2}\right)$ beyond 
the admissible range of values of $c$ in Theorem \ref{theorem-main-2}.
\end{remark}

\begin{remark}
	Based on numerical evidences, we conjecture that $\omega \in (-1,\infty)$ is always satisfied for the even periodic wave in Theorem \ref{theorem-main-2}. 
\end{remark}

The paper is organized as follows. The odd periodic wave of Theorem \ref{theorem-main-1} is characterized in Section 2. Examples of the odd periodic waves are given in Section 3 with Stokes expansions, exact elliptic solutions in the local case $\alpha = 2$, and numerical approximations for $\alpha = 1$. The even periodic waves of Theorem \ref{theorem-main-2} is characterized in Section 4. Similar examples of the even periodic wave are given in Section 5. Section 6 discusses an open problem to characterize all possible periodic waves with the single-lobe profile among solutions to the stationary equation (\ref{ode-stat}) with $b \neq 0$.

\section{Odd periodic waves}

Here we consider {\em the odd periodic waves} and provide a proof of Theorem \ref{theorem-main-1}.

If $\psi \in H^{\alpha}_{\rm per}$ is a solution to the stationary equation
(\ref{ode-stat}) with $b = \frac{1}{\pi}\int_{-\pi}^{\pi}\psi^3dx$, then
$\psi$ satisfies the zero-mean constraint and the boundary-value problem:
\begin{equation}\label{ode-bvp}
D^{\alpha}\psi + c\psi = 2\Pi_0 \psi^3, 
\end{equation}
where $\Pi_0 f := f - \frac{1}{2\pi} \int_{-\pi}^{\pi} f(x) dx$ is the projection
operator reducing the mean value of $2\pi$-periodic functions to zero.

Since we use variational methods, we consider weak solutions of the boundary-value problem (\ref{ode-bvp}) in $H^{\frac{\alpha}{2}}_{\rm per}$.
By the same bootstrapping argument as in Proposition 1 in \cite{stefanov} 
or Proposition 2.4 in \cite{NPU},
if $\psi \in H^{\frac{\alpha}{2}}_{\rm per}$ is a weak solution of the boundary-value
problem (\ref{ode-bvp}), then $\psi \in H^{\infty}_{\rm per}$ and, in particular,
it is a strong solution to the boundary-value problem (\ref{ode-bvp}) in $H^{\alpha}_{\rm per}$.

The following theorem and its corollary give the construction and properties of the periodic waves in a subspace of odd functions. 
These odd periodic waves satisfy the boundary-value problem (\ref{ode-bvp}).

\begin{theorem}
\label{theorem-existence}
Fix $\alpha > \frac{1}{2}$. For every $c > -1$, there exists
the ground state (minimizer) $\chi \in H^{\frac{\alpha}{2}}_{\rm per, odd}$
of the following constrained minimization problem:
\begin{equation}
\label{minBfunc}
q_{c, {\rm odd}} := \inf_{u\in H^{\frac{\alpha}{2}}_{\rm per, odd}} \left\{ \mathcal{B}_c(u) : \quad \int_{-\pi}^{\pi} u^4 dx = 1 \right\},
\end{equation}
where
\begin{equation}
\label{def-B}
\mathcal{B}_c(u) := \frac{1}{2}\int_{-\pi}^{\pi} \left[ (D^{\frac{\alpha}{2}}u)^2 + c u^2 \right] dx.
\end{equation}
If $\alpha \leq 2$, the ground state has the single-lobe profile, which is even with respect to
the points at $x = \pm \pi/2$.
\end{theorem}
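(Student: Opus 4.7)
The plan is to apply the direct method of the calculus of variations to (\ref{minBfunc}) and then to extract the single-lobe structure via a symmetric rearrangement adapted to the odd-symmetry constraint. First I would check that the admissible set is non-empty (any nontrivial odd smooth function rescales into it) and that the $L^4$ constraint is well-defined on $H^{\frac{\alpha}{2}}_{\rm per, odd}$ via the Sobolev embedding $H^{\frac{\alpha}{2}}(\mathbb{T}) \hookrightarrow L^4(\mathbb{T})$, valid for $\alpha > \frac{1}{2}$. The key structural point giving coercivity is that any $u \in H^{\frac{\alpha}{2}}_{\rm per, odd}$ has vanishing zero Fourier mode, so
\[
\|D^{\alpha/2} u\|_{L^2}^2 = \sum_{n \neq 0} |n|^{\alpha} |u_n|^2 \geq \sum_{n \neq 0} |u_n|^2 = \|u\|_{L^2}^2,
\]
and combined with $c > -1$ this yields $\mathcal{B}_c(u) \geq \delta \|u\|_{H^{\alpha/2}}^2$ for some $\delta > 0$.

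A minimizing sequence $\{u_n\} \subset H^{\frac{\alpha}{2}}_{\rm per, odd}$ is therefore bounded, and after extraction $u_n \rightharpoonup \chi$ weakly in $H^{\frac{\alpha}{2}}_{\rm per}$; the weak limit is odd because $H^{\frac{\alpha}{2}}_{\rm per, odd}$ is weakly closed. The compact Sobolev embedding $H^{\frac{\alpha}{2}}(\mathbb{T}) \hookrightarrow L^4(\mathbb{T})$ (compact for $\alpha > \frac{1}{2}$) yields $u_n \to \chi$ in $L^4$, so $\|\chi\|_{L^4}^4 = 1$ and $\chi \not\equiv 0$. Weak lower semicontinuity of $\mathcal{B}_c$ then identifies $\chi$ as a minimizer, and the bootstrap argument cited immediately after (\ref{ode-bvp}) upgrades $\chi$ to $H^{\infty}_{\rm per}$.

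For the single-lobe property when $\alpha \leq 2$, I would use that $\chi$ is odd, hence $\chi(0)=\chi(\pi)=0$, and symmetrize as follows: on $[0,\pi]$, replace $|\chi|$ by its symmetric-decreasing rearrangement $\chi^\sharp$ about $x=\pi/2$ (which still vanishes at the endpoints), then extend to $[-\pi,0]$ by odd reflection. The resulting function is odd, even about $\pm \pi/2$, and single-lobe by construction, while its $L^2$ and $L^4$ norms coincide with those of $\chi$. The energy comparison requires the P\'olya--Szeg\H{o}-type inequality
\[
\|D^{\alpha/2}\chi^\sharp\|_{L^2}^2 \leq \|D^{\alpha/2}\chi\|_{L^2}^2
\]
together with the subordination $\|D^{\alpha/2}|\chi|\|_{L^2} \leq \|D^{\alpha/2}\chi\|_{L^2}$. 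Both hold for $\alpha \in (0,2]$: the second via the pointwise inequality $||a|-|b|| \leq |a-b|$ applied to the Gagliardo-type singular-integral representation of the fractional seminorm, the first via circular Riesz rearrangement on $\mathbb{T}$. Rigidity in the rearrangement inequality then forces $\chi$ itself to have the claimed structure.

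The principal technical difficulty I foresee is the fractional P\'olya--Szeg\H{o} inequality on the circle in a form compatible with the odd constraint, together with the rigidity required to identify $\chi$ with $\chi^\sharp$ up to sign and rotation. For $\alpha = 2$ this reduces to the classical Dirichlet-energy rearrangement inequality, but for $\alpha \in \left(\frac{1}{2},2\right)$ one must pass through the periodized Gagliardo double-integral representation and invoke circular Riesz rearrangement, or equivalently go through a harmonic/Poisson extension. Ensuring that the rearrangement preserves oddness and picks out $\pi/2$ as the correct center of symmetry is where most of the technical work will go.
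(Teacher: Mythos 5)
The existence half of your argument (coercivity of $\mathcal{B}_c$ on the zero-mean odd subspace via the Poincar\'e inequality for $c>-1$, boundedness of a minimizing sequence, weak compactness, compact embedding $H^{\frac{\alpha}{2}}_{\rm per,odd}\hookrightarrow L^4$ for $\alpha>\frac12$ to preserve the constraint, and weak lower semicontinuity) is correct and is essentially identical to the paper's proof.

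The single-lobe part has a genuine gap in the energy-comparison step. Your competitor is built by rearranging $|\chi|$ on the half-period $[0,\pi]$ about $x=\pi/2$ and then extending by \emph{odd} reflection. This function is \emph{not} the symmetric-decreasing rearrangement of $\chi$ (or of $|\chi|$) on $\mathbb{T}$ --- the latter is sign-definite and symmetric about a single point, hence leaves the admissible class $H^{\frac{\alpha}{2}}_{\rm per,odd}$. Consequently the ``circular Riesz rearrangement on $\mathbb{T}$'' you invoke does not apply: writing the seminorm through the periodized Gagliardo double integral and using oddness, the interactions between a point in $[0,\pi]$ and a point in $[-\pi,0]$ produce terms of the form $|u(x)+u(z)|^2$ with $x,z\in[0,\pi]$, whose cross term $+2u(x)u(z)$ carries the sign for which the Riesz rearrangement inequality goes the \emph{wrong} way (rearrangement increases positive-type bilinear forms). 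So the half-period-rearranged, oddly reflected competitor is not shown to have smaller $\mathcal{B}_c$, and the subordination $\|D^{\alpha/2}|\chi|\|_{L^2}\le\|D^{\alpha/2}\chi\|_{L^2}$ does not repair this, since passing to $|\chi|$ destroys the oddness you need to re-impose. This is exactly the point where the paper does not use the plain circular P\'olya--Szeg\H{o} inequality but instead cites the fractional P\'olya--Szeg\H{o}-type inequality of Lemma A.1 in Claasen--Johnson \cite{CJ2019}, which is formulated for rearrangements within a sign-changing (antiperiodic) symmetry class and shows that the nonlocal energy does not increase under precisely this kind of symmetry-preserving rearrangement. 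Without that lemma (or an equivalent substitute handling the cross-half-period terms), your symmetrization step, and hence the conclusion that the minimizer is single-lobe and even about $x=\pm\pi/2$, is not established. A secondary, smaller issue: the final ``rigidity'' appeal identifying $\chi$ with its rearrangement is also stated without proof; the paper instead argues softly that a minimizer must already be of the rearranged form, which is the same level of rigor but avoids claiming a strict-equality characterization.
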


\begin{proof}
It follows that $\mathcal{B}_c$ is a smooth functional bounded on $H^{\frac{\alpha}{2}}_{\rm per, odd}$.
Moreover, $\mathcal{B}_c$ is proportional to the quadratic form of the operator $c + D^{\alpha}$
with the spectrum in $L^2_{\rm odd}$ given by $\{ c + m^{\alpha}, \;\; m \in \mathbb{N} \}$.
By Poincar\'e's inequality, we have 
\begin{equation}
\label{positivity}
\mathcal{B}_c(u) \geq \frac{1}{2} (c + 1) \| u \|_{L^2}^2, \quad u \in H^{\frac{\alpha}{2}}_{\rm per, odd},
\end{equation}
and by G{\aa}rding's inequality, for every $c > -1$ there exists $C > 0$ such that
$$
\mathcal{B}_c(u) \geq C \| u \|_{H^{\frac{\alpha}{2}}_{\rm per}}^2, \quad u \in H^{\frac{\alpha}{2}}_{\rm per, odd}.
$$
Hence $\mathcal{B}_c$ is equivalent to the squared norm in $H^{\frac{\alpha}{2}}_{\rm per, odd}$ so that $q_{c, {\rm odd}} \geq 0$.

Let $\{u_n\}_{n \in \mathbb{N}}$ be a minimizing sequence for the constrained minimization problem \eqref{minBfunc}, that is, a sequence satisfying
$$
\mathcal{B}_c(u_n)\rightarrow  q_{c,{\rm odd}} \quad  \mbox{as} \quad  n\rightarrow \infty.
$$
Since $\{u_n\}_{n \in \mathbb{N}}$ is bounded in $H^{\frac{\alpha}{2}}_{\rm per, odd}$, there exists
$\chi \in H^{\frac{\alpha}{2}}_{\rm per, odd}$  such that, up to a subsequence,
$$
u_n\rightharpoonup \chi \quad \mbox{in} \ H^{\frac{\alpha}{2}}_{\rm per, odd},  \quad  \mbox{as} \quad n\rightarrow \infty.
$$
For every $\alpha>\frac{1}{2}$, the energy space $H^{\frac{\alpha}{2}}_{\rm per, odd}$ is compactly embedded in $L^4_{\rm odd}$ (see, e.g., Theorem 4.2 in \cite{Ambrosio}).
Thus, there is a positive constant $C$ such that for every $\chi \in H^{\frac{\alpha}{2}}_{\rm per, odd}$, 
\begin{equation}
\label{Sob-embedding}
\| \chi \|_{L^4} \leq C \| \chi \|_{H^{\frac{\alpha}{2}}}, 
\end{equation}
and
$$
u_n\rightarrow \chi \quad \mbox{in} \ L^4_{\rm odd},  \quad  \mbox{as} \quad n\rightarrow \infty.
$$
Using the estimate
\begin{eqnarray*}
\left|\int_{-\pi}^{\pi}(u_n^4-\chi^4)dx\right|
&\leq&\int_{-\pi}^{\pi}|u_n^4-\chi^4|dx \\
&\leq& \left( \|\chi\|^3_{L^4} + \|\chi\|_{L^4}^2 \|u_n\|_{L^4}
 + \|\chi\|_{L^4} \|u_n\|_{L^4}^2 + \|u_n\|_{L^4}^3 \right) \|u_n-\chi\|_{L^4},
\end{eqnarray*}
it follows that $\chi$ satisfies the constraint: $\int_{-\pi}^{\pi} \chi^4 dx = 1$. 
In view of (\ref{Sob-embedding}), this implies that  $q_{c, {\rm odd}} > 0$.

Thanks to the weak lower semi-continuity of $\mathcal{B}_c$, we have
\begin{equation*}
\mathcal{B}_c(\chi) \leq \liminf_{n\rightarrow \infty} \mathcal{B}(u_n) = q_{c, {\rm odd}}.
\end{equation*}
Since $\chi$ satisfies the constraint, we also have 
$\mathcal{B}_c(\chi) \geq q_{c, {\rm odd}}$, hence 
$\mathcal{B}_c(\chi) = q_{c, {\rm odd}}$ and $\chi \in H^{\frac{\alpha}{2}}_{\rm per, odd}$ is the ground state (global minimizer) of the variational problem (\ref{minBfunc}).

If $\alpha \in (0,2]$, the symmetric rearrangements of $u$ do not increase
$\mathcal{B}_c(u)$ while leaving the constraint on the $L^4$-norm invariant due to the fractional Polya--Szeg\"{o} inequality, see Lemma A.1 in \cite{CJ2019}.
As a result, the minimizer $\chi \in H^{\frac{\alpha}{2}}_{\rm per, odd}$ of $\mathcal{B}_c(u)$ must decrease symmetrically away from
the maximum point. Since $\chi(x) = 0$ at $x = 0$ and $x = \pm \pi$, the symmetry points of $\chi$ are located at $x = \pm \pi/2$,
so that the single-lobe profile is even with respect to the points at $x = \pm \pi/2$.
\end{proof}

\begin{corollary}
\label{corollary-existence}
Let $\chi$ be the ground state of Theorem \ref{theorem-existence}.
There exists $C > 0$ such that $\psi(x) = C \chi(x)$ satisfies the boundary-value problem (\ref{ode-bvp}).
\end{corollary}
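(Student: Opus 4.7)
The plan is to realize $\psi$ as a rescaling $\psi = C\chi$ of the constrained minimizer, with $C > 0$ chosen to absorb the Lagrange multiplier arising from the constraint $\|u\|_{L^4}^4 = 1$. Because $\chi$ minimizes $\mathcal{B}_c$ over the constraint set, which is a smooth manifold near $\chi$ (the differential of the constraint functional, $v \mapsto 4\int_{-\pi}^{\pi}\chi^3 v\,dx$, is nontrivial since $\chi \not\equiv 0$), the Lagrange multiplier theorem on $H^{\frac{\alpha}{2}}_{\rm per, odd}$ yields some $\lambda \in \mathbb{R}$ with
\[
\int_{-\pi}^{\pi}\bigl[ (D^{\frac{\alpha}{2}}\chi)(D^{\frac{\alpha}{2}}v) + c\chi v \bigr] dx = 4\lambda \int_{-\pi}^{\pi}\chi^3 v\,dx, \qquad v \in H^{\frac{\alpha}{2}}_{\rm per, odd}.
\]
The residual $D^{\alpha}\chi + c\chi - 4\lambda \chi^3$ is an odd element of $L^2(\mathbb{T})$ which is orthogonal to every odd test function; it must therefore vanish, so $\chi$ weakly satisfies $D^{\alpha}\chi + c\chi = 4\lambda \chi^3$.

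Next I identify $\lambda$ by pairing this equation with $\chi$ and using the constraint $\|\chi\|_{L^4}^4 = 1$ together with the identity $\int \chi\, D^{\alpha}\chi\, dx = \|D^{\frac{\alpha}{2}}\chi\|_{L^2}^2$:
\[
4\lambda = \int_{-\pi}^{\pi}\bigl[ (D^{\frac{\alpha}{2}}\chi)^2 + c\chi^2 \bigr] dx = 2\mathcal{B}_c(\chi) = 2 q_{c, \rm odd}.
\]
Theorem \ref{theorem-existence} gives $q_{c, \rm odd} > 0$, so $\lambda > 0$. Setting $C := \sqrt{2\lambda} > 0$ and $\psi := C\chi$, the relation $C^2 = 2\lambda$ yields
\[
D^{\alpha}\psi + c\psi = C\bigl( D^{\alpha}\chi + c\chi \bigr) = 4\lambda C\chi^3 = 2 C^3 \chi^3 = 2\psi^3.
\]
Since $\psi$ is odd, $\psi^3$ is odd and has zero mean, so $\Pi_0 \psi^3 = \psi^3$ and $\psi$ satisfies (\ref{ode-bvp}) weakly. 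The bootstrapping argument cited earlier (Proposition 1 of \cite{stefanov}, Proposition 2.4 of \cite{NPU}) then promotes $\psi \in H^{\frac{\alpha}{2}}_{\rm per, odd}$ to a strong solution in $H^{\infty}_{\rm per}$.

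The argument is essentially routine; the only substantive points are verifying nondegeneracy of the constraint at $\chi$ so that Lagrange multipliers apply, and positivity of $\lambda$ so that the real rescaling constant $C$ exists. Both follow immediately from the content of Theorem \ref{theorem-existence}, and the reduction from the restricted test space of odd functions to the full equation is automatic because the residual is itself odd.
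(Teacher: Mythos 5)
Your proof is correct and follows essentially the same route as the paper: apply the Lagrange multiplier theorem in $H^{\frac{\alpha}{2}}_{\rm per, odd}$, identify the multiplier as $2\mathcal{B}_c(\chi) = 2q_{c,\rm odd} > 0$ by pairing with $\chi$ and using the constraint, rescale by $C = \sqrt{\mathcal{B}_c(\chi)}$, and invoke oddness of $\psi^3$ plus bootstrapping to land on (\ref{ode-bvp}). You are in fact slightly more careful than the paper in checking the nondegeneracy of the constraint and in justifying that the Euler--Lagrange equation, initially tested only against odd functions, extends to the full equation because the residual is itself odd.
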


\begin{proof}
By Lagrange's Multiplier Theorem, the constrained minimizer $\chi \in H^{\frac{\alpha}{2}}_{\rm per, odd}$ satisfies
the stationary equation
\begin{equation}
\label{lagrange}
D^{\alpha}\chi + c \chi = \mu \chi^3,
\end{equation}
for some constants $\mu$. From the constraint $\int_{-\pi}^{\pi} \chi^4 dx = 1$, we have
$\mu = 2 \mathcal{B}_c(\chi)$. Since $\mathcal{B}_c(\chi) > 0$, the scaling transformation $\psi = C \chi$
with $C := \sqrt{\mathcal{B}_c(\chi)}$ maps the stationary equation (\ref{lagrange})
to the form (\ref{ode-stat}) with $b = 0$. By the bootstrapping argument, $\psi \in H^{\alpha}_{\rm per, odd}$,
hence it is a strong solution to the boundary-value problem (\ref{ode-bvp}).
\end{proof}

\begin{remark}
Since $\psi$ and $\psi^3$ are odd for the solution constructed in Corollary \ref{corollary-existence},
$\psi$ is a solution to the stationary equation (\ref{ode-stat}) with $b = 0$ for every $c \in (-1,\infty)$.
\label{remark-b}
\end{remark}

Let $\psi \in H^{\alpha}_{\rm per, odd}$ be a solution to the boundary-value problem (\ref{ode-bvp})
for some $c \in (-1,\infty)$ obtained by Theorem \ref{theorem-existence} and Corollary \ref{corollary-existence}.
Let $\mathcal{L}$ be the linearized operator around the wave $\psi$ given by (\ref{operator}) and
\begin{equation}\label{operator-c}
\mathcal{L} : \quad H^{\alpha}_{\rm per} \subset L^2(\mathbb{T}) \to L^2(\mathbb{T}).
\end{equation}
In what follows, we determine the multiplicity of the zero eigenvalue of $\mathcal{L}$ denoted as $z(\mathcal{L})$
and the number of negative eigenvalues of $\mathcal{L}$ with the account of their multiplicities denoted as $n(\mathcal{L})$.
It follows from the stationary equation (\ref{ode-stat}) with $b = 0$ that
\begin{eqnarray}
\label{range-1}
\mathcal{L} 1 = c - 6 \psi^2
\end{eqnarray}
and
\begin{eqnarray}
\label{range-2}
\mathcal{L} \psi = - 4 \psi^3.
\end{eqnarray}
By the translational symmetry, we always have $\mathcal{L} \partial_x \psi = 0$.

Since $\psi$ is the single-lobe profile of the periodic wave in the sense of Definition \ref{defilobe}
and since $\psi$ is even with respect to the points at $x = \pm \pi/2$, then we can place
the unique maximum of $\psi$ at $x = \pi/2$ and adopt several results of \cite{hur}
with the same proof after translation $x \mapsto x - \pi/2$.

\begin{proposition}\cite{hur}
\label{prop-nodal}
Let $\alpha \in (\frac{1}{2},2]$ and $\psi \in H^{\alpha}_{\rm per, odd}$ be a solution obtained in Theorem \ref{theorem-existence} and Corollary \ref{corollary-existence}. An eigenfunction of $\mathcal{L}$ defined by
(\ref{operator}) and \emph{}(\ref{operator-c})
corresponding to the $n$-th eigenvalue of $\mathcal{L}$ for $n = 1,2,3$ changes its sign at most
$2(n-1)$ times over $\mathbb{T}$. Moreover, the eigenfunction of $\mathcal{L} |_{L^2_{\rm even}}$
for the $n$-th eigenvalue of $\mathcal{L} |_{L^2_{\rm even}}$ changes its sign at most $2(n-1)$ times over $\mathbb{T}$.
\end{proposition}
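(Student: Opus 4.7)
The plan is to reduce directly to the setting of \cite{hur} by performing the translation $x \mapsto x - \pi/2$, which places the unique maximum of $\psi$ at $x = 0$ and the unique minimum at $x = \pm \pi$, turning $\psi$ into an even, single-lobe profile. This translation commutes with $D^{\alpha}$ and preserves the decomposition $L^2(\mathbb{T}) = L^2_{\rm even} \oplus L^2_{\rm odd}$, so the spectral and nodal counts of $\mathcal{L}$ are unaffected. After translation, $\partial_x \psi$ is odd with exactly two sign changes on $\mathbb{T}$, and $\mathcal{L} \partial_x \psi = 0$ provides an explicit zero eigenfunction in $L^2_{\rm odd}$ whose number of sign changes serves as the reference value $2(n-1)$ for $n=2$.

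First, I would prove sign-definiteness of the ground state (case $n=1$) by a Perron--Frobenius/Krein--Rutman argument applied to the resolvent. For $\lambda > 0$ large enough, $\mathcal{L} + \lambda$ is positive and invertible, and its integral kernel on $\mathbb{T}$ is strictly positive: the Fourier representation of the kernel of $D^\alpha + \mu$ gives a positive periodic Green's function for $\mu > 0$, and the multiplicative perturbation by the bounded potential $-6\psi^2$ is absorbed by a Neumann series without destroying positivity once $\lambda$ is large. Krein--Rutman then implies that the leading eigenvalue of $(\mathcal{L}+\lambda)^{-1}$ is simple with a strictly positive eigenfunction, hence the first eigenfunction of $\mathcal{L}$ on $L^2(\mathbb{T})$ is simple and sign-definite; the same argument restricts to $L^2_{\rm even}$.

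For $n=2,3$, I would carry out a Courant-type nodal-domain count in the nonlocal setting following \cite{hur}. The essence is: if the $n$-th eigenfunction $\varphi_n$ had more than $2(n-1)$ sign changes, one could form a linear combination of indicator-truncated pieces of $\varphi_n$ on its nodal intervals (regularized so as to lie in $H^{\alpha/2}_{\rm per}$ via the fractional Polya--Szeg\"{o} inequality, Lemma~A.1 in \cite{CJ2019}) that is orthogonal to the first $n-1$ eigenfunctions and yet achieves Rayleigh quotient no larger than the $n$-th eigenvalue, contradicting the min-max characterization. Combined with the explicit zero eigenfunction $\partial_x \psi$ (with exactly two sign changes) and its role in pinning the position of $0$ in the spectrum of $\mathcal{L}$, this yields the claimed bound $2(n-1)$ for $n=1,2,3$. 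Restricting to $L^2_{\rm even}$ removes the odd kernel element $\partial_x \psi$ while leaving the nodal argument intact, which gives the corresponding statement for $\mathcal{L}|_{L^2_{\rm even}}$.

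The main obstacle is that the classical Sturm oscillation theorem does not apply to the nonlocal operator $D^\alpha + c - 6\psi^2$: truncating an eigenfunction to a nodal interval no longer preserves the quadratic form because of the nonlocality of $D^\alpha$. The resolution, carried out in \cite{hur}, is to replace sharp truncation by fractional rearrangement together with an analysis of the symmetry of $\psi$ around its extremum, which restores the comparison between Rayleigh quotients. Since the argument uses only the single-lobe, even-about-the-maximum structure of $\psi$ and the general form of $\mathcal{L}$, it transfers verbatim to our setting once the translation has been applied, and the proposition follows.
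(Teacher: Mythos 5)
Your overall strategy coincides with the paper's: Proposition \ref{prop-nodal} is not proved in the text but imported from \cite{hur} after the translation $x \mapsto x - \pi/2$ that places the maximum of the single-lobe profile at the origin, exactly as you propose; the paper only adds the remark that the potential $\psi^2$ of $\mathcal{L}$ has period $\pi$ rather than $2\pi$. Your sketch of the internal mechanism of \cite{hur} is broadly faithful for the first sentence (a positivity-improving resolvent for the ground state, a Courant-type min--max count for $n=2,3$), though the nonlocal obstruction is resolved there not by rearrangement but by the strict off-diagonal negativity $\langle D^{\alpha}u, v\rangle < 0$ for disjointly supported nonnegative $u,v$, which is what replaces the truncation identity of local Sturm theory; the fractional Polya--Szeg\"{o} inequality is a rearrangement statement and is not what puts the nodal-domain pieces into $H^{\alpha/2}_{\rm per}$.

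The one assertion that needs repair is that the translation ``preserves the decomposition $L^2(\mathbb{T}) = L^2_{\rm even}\oplus L^2_{\rm odd}$.'' It does not: conjugation by $x \mapsto x - \pi/2$ exchanges parity about $x=\pi/2$ with parity about $x=0$, and these are genuinely different subspaces (for instance $\partial_x\psi$ is even about $0$ but odd about $\pi/2$). The second sentence of the proposition, as it is used in Lemma \ref{lemma-index}, concerns $\mathcal{L}|_{L^2_{\rm even}}$ with parity about the origin, where $\partial_x\psi$ \emph{does} live and its two nodes are what force $n(\mathcal{L}|_{L^2_{\rm even}}) \geq 1$; your reduction delivers the statement only for parity about the maximum, from which $\partial_x\psi$ is absent. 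The fix is short: since $\psi$ is odd about $0$ and even about $\pm\pi/2$, one has $\psi(x+\pi) = -\psi(x)$, so the potential $\psi^2$ is $\pi$-periodic and even about both $x=0$ and $x=\pm\pi/2$; hence $\mathcal{L}$ commutes with either reflection, and the restricted-operator argument of \cite{hur} (which uses only that the reflection commutes with $\mathcal{L}$ and that replacing $f$ by $|f|$ does not increase the quadratic form) applies verbatim to the reflection about $x=0$. With that adjustment your proposal matches the paper's intended proof.
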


\begin{proposition}\cite{hur}
	Assume $\alpha \in (\frac{1}{2},2]$ and $\psi \in H^{\alpha}_{\rm per, odd}$ be a solution obtained in Theorem \ref{theorem-existence} and Corollary \ref{corollary-existence}. If $\{ 1, \psi, \psi^2\} \in {\rm Range}(\mathcal{L})$,
then ${\rm Ker}(\mathcal{L}) = {\rm span}(\partial_x \psi)$.
\label{prop-kernel}
\end{proposition}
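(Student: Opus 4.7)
The plan is to decompose $\text{Ker}(\mathcal{L})$ by parity and to treat the two summands with different tools: variational minimality for the odd part and Proposition \ref{prop-nodal} for the even part. Since $\psi$ is odd, $\psi^2$ is even and $\mathcal{L} = D^\alpha + c - 6\psi^2$ commutes with the reflection $x \mapsto -x$. Hence $\text{Ker}(\mathcal{L}) = \text{Ker}_e \oplus \text{Ker}_o$, with $\partial_x\psi \in \text{Ker}_e$ (derivative of an odd function). Self-adjointness of $\mathcal{L}$ converts the range hypothesis into orthogonality conditions on any $f \in \text{Ker}(\mathcal{L})$: splitting by parity, $\text{Ker}_e \perp \{1,\psi^2\}$ (from $1, \psi^2 \in \text{Range}(\mathcal{L})$) and $\text{Ker}_o \perp \{\psi\}$ (from $\psi \in \text{Range}(\mathcal{L})$).

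For the odd part, the strategy is to test the constrained minimality of $\chi := \psi/\sqrt{\mathcal{B}_c(\chi)}$ from Theorem \ref{theorem-existence}. For any $f_o \in \text{Ker}_o$, the identity $\mathcal{L}\psi = -4\psi^3$ together with self-adjointness yields
\begin{equation*}
\int_{-\pi}^{\pi}\chi^3 f_o \, dx \propto \int_{-\pi}^{\pi}\psi^3 f_o \, dx
= -\tfrac{1}{4}\int_{-\pi}^{\pi}\mathcal{L}\psi \cdot f_o \, dx
= -\tfrac{1}{4}\int_{-\pi}^{\pi}\psi \cdot \mathcal{L}f_o \, dx = 0,
\end{equation*}
so $f_o$ belongs to the tangent space $T_\chi\mathcal{M} = \{v \in L^2_{\rm odd} : \langle \chi^3, v\rangle = 0\}$ at the minimizer. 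Minimality gives $\langle \mathcal{L} v, v\rangle \geq 0$ on $T_\chi \mathcal{M}$, so $f_o$ is a null direction of this restricted quadratic form. The remaining task is to promote nonnegativity to strict positivity on $T_\chi \mathcal{M}$; here one invokes the extra orthogonality $f_o \perp \psi$ coming from $\psi \in \text{Range}(\mathcal{L})$, together with the observation that the odd subspace admits no continuous symmetry preserving oddness (translation is lost), so no non-translational degeneracy of the Hessian at $\chi$ should exist. This forces $f_o = 0$ and hence $\text{Ker}_o = \{0\}$.

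For the even part, one applies Proposition \ref{prop-nodal} to $\mathcal{L}|_{L^2_{\rm even}}$. Because $\psi$ has its unique maximum at $x = \pi/2$ and unique minimum at $x = -\pi/2$, the even function $\partial_x\psi$ changes sign exactly twice on $\mathbb{T}$. Since the first eigenfunction of $\mathcal{L}|_{L^2_{\rm even}}$ must be of one sign (at most $0$ sign changes), $\partial_x\psi$ corresponds to an eigenvalue of index at least two, so $n(\mathcal{L}|_{L^2_{\rm even}}) \geq 1$. Assuming for contradiction that $g \in \text{Ker}_e$ is linearly independent of $\partial_x\psi$, we combine $g$ with $\partial_x\psi$ to form an element of $\text{Ker}_e$ that, by the orthogonality $\text{Ker}_e \perp \{1, \psi^2\}$ and the identities $\mathcal{L}(1) = c - 6\psi^2$, $\mathcal{L}\psi = -4\psi^3$, satisfies additional nodal restrictions incompatible with the sign-change bound of Proposition \ref{prop-nodal} for the second even eigenfunction. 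The contradiction forces $\text{Ker}_e = \text{span}(\partial_x\psi)$, and together with $\text{Ker}_o = \{0\}$ gives $\text{Ker}(\mathcal{L}) = \text{span}(\partial_x\psi)$. The main obstacle, appearing in both parts, is converting the nonnegativity of the quadratic form on the appropriate constrained subspace into strict positivity away from $\partial_x\psi$; this is where all three range conditions on $1$, $\psi$, and $\psi^2$ are simultaneously consumed.
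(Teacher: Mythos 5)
First, note that the paper itself does not prove this proposition---it is imported from \cite{hur} and adapted by the translation $x\mapsto x-\pi/2$, the underlying argument there being a nodal (oscillation) argument. Your skeleton---parity decomposition of the kernel, orthogonality of the kernel to $\{1,\psi,\psi^2\}$ by self-adjointness, and the computation $\langle\psi^3,f_o\rangle=-\tfrac14\langle\psi,\mathcal{L}f_o\rangle=0$---is sound, but both steps that would actually close the argument are asserted rather than proved, and they are exactly the hard parts.

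In the odd sector, constrained minimality only gives $\langle\mathcal{L}v,v\rangle\geq 0$ on the tangent space $T_\chi\mathcal{M}$; a nonnegative quadratic form can perfectly well have a null direction, so minimality can never, by itself, exclude one. Your justification---``the odd subspace admits no continuous symmetry preserving oddness, so no non-translational degeneracy of the Hessian should exist''---is not an argument: absence of symmetry does not preclude degeneracy, and for this very operator the paper explicitly allows $z(\mathcal{L})=2$ when $1\notin{\rm Range}(\mathcal{L})$, so extra kernel directions are a genuine possibility that must be excluded using the hypotheses, not by fiat. The orthogonality $f_o\perp\psi$, which you mention but never deploy, is what does the work: since $n(\mathcal{L})\leq 2$, Proposition \ref{prop-nodal} caps the sign changes of $f_o$ at four; splitting $f_o$ further by parity about $x=\pi/2$ (the potential $6\psi^2$ is even about both $0$ and $\pi/2$), the piece odd about $\pi/2$ is handled by the simplicity of the lowest eigenvalue in that sector (Proposition \ref{prop-odd}), while the piece even about $\pi/2$ is forced by the two symmetries and the four-node cap to change sign only at $0$ and $\pm\pi$, hence to have the same sign pattern as $\pm\psi$, contradicting $\langle f_o,\psi\rangle=0$. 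In the even sector your text is a placeholder: ``combine $g$ with $\partial_x\psi$ to form an element \ldots\ that satisfies additional nodal restrictions incompatible with the sign-change bound'' names no combination and derives no restriction. The mechanism is again sign-definiteness: $g\perp 1$ forces sign changes, the nodal bound and the two reflection symmetries force them to sit at $\pm x_0$ and $\pm(\pi-x_0)$, and since $\psi^2$ is monotone on each quarter period, $g\cdot\bigl(\psi^2-\psi(x_0)^2\bigr)$ is then sign-definite while $\langle g,\psi^2-\psi(x_0)^2\rangle=0$ by the range hypotheses on $1$ and $\psi^2$. Until these sign-definiteness arguments (and the simplicity statement in the odd-about-$\pi/2$ sector) are written out, the proof is incomplete.
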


\begin{proposition}\cite{hur}
	Assume $\alpha \in (\frac{1}{2},2]$ and $\psi \in H^{\alpha}_{\rm per, odd}$ be a solution obtained in Theorem \ref{theorem-existence} and Corollary \ref{corollary-existence}.
Then, $\partial_x \psi \in {\rm Ker}(\mathcal{L})$
corresponds to the lowest eigenvalue of $\mathcal{L}$ in the space of odd functions with respect to $x = \pi/2$.
\label{prop-odd}
\end{proposition}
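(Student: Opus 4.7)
The plan is to translate so that the single-lobe maximum lies at the origin, and then to use a Perron--Frobenius / Krein--Rutman argument to identify $\partial_x \psi$ with the ground state of the restricted operator, in the spirit of \cite{hur}.

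Let $y := x - \pi/2$ and $\tilde\psi(y) := \psi(y + \pi/2)$. By the single-lobe property from Theorem \ref{theorem-existence}, $\tilde\psi$ is even about $y = 0$, strictly decreasing on $[0, \pi]$, with unique maximum at $y = 0$ and minimum at $y = \pm \pi$. The conjugated operator $\tilde{\mathcal{L}} := D^\alpha + c - 6 \tilde\psi^2$ commutes with the reflection $y \mapsto -y$ and hence preserves $L^2_{\rm odd}(\mathbb{T})$, which, under the translation, is exactly the subspace of functions odd with respect to $x = \pi/2$. The derivative $\partial_y \tilde\psi$ lies in $L^2_{\rm odd} \cap \mathrm{Ker}(\tilde{\mathcal{L}})$, so $0$ is an eigenvalue of $\tilde{\mathcal{L}}|_{L^2_{\rm odd}}$; by strict monotonicity of $\tilde\psi$ on $[0, \pi]$, the function $-\partial_y \tilde\psi$ is strictly positive on $(0, \pi)$ and vanishes at $y \in \{0, \pi\}$.

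Identify $L^2_{\rm odd}(\mathbb{T}) \cong L^2(0, \pi)$ via restriction. Under this identification, the free semigroup $e^{-t D^\alpha}|_{L^2_{\rm odd}}$ acts on $L^2(0, \pi)$ by the integral kernel
\[
K_t(y, z) = \tilde p_t(y - z) - \tilde p_t(y + z),
\]
where $\tilde p_t$ is the periodic heat kernel of $D^\alpha$ on $\mathbb{T}$, obtained as a periodic sum of the symmetric unimodal $\alpha$-stable L\'evy density on $\mathbb{R}$. For $y, z \in (0, \pi)$, $|y - z| < y + z \leq 2\pi$, and strict monotonic decay of $\tilde p_t$ in $|\cdot|$ on $[-\pi, \pi]$ yields $K_t(y, z) > 0$. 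The bounded multiplicative factor $e^{-t(c - 6\tilde\psi^2)}$, incorporated via the Trotter product formula, preserves this positivity. Hence $e^{-t\tilde{\mathcal{L}}}|_{L^2_{\rm odd}}$ is positivity-improving on $L^2(0, \pi)$, so by the Krein--Rutman theorem the ground state eigenvalue $\mu_1$ of $\tilde{\mathcal{L}}|_{L^2_{\rm odd}}$ is simple with a strictly positive eigenfunction on $(0, \pi)$. Since $-\partial_y \tilde\psi$ is such an eigenfunction with eigenvalue $0$, we conclude $\mu_1 = 0$, and translating back yields the proposition.

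The main obstacle is verifying the strict positivity of the reflected periodic heat kernel $K_t$ on $(0, \pi)^2$: the periodization $\tilde p_t$ of the L\'evy density on $\mathbb{R}$ must remain strictly decreasing in $|\cdot|$ on the fundamental domain $[-\pi, \pi]$, which is standard for symmetric stable processes on the torus but merits careful justification. A variational alternative, closer to \cite{hur}, avoids the heat-kernel analysis entirely: it uses the nodal bound of Proposition \ref{prop-nodal} together with the facts that $\partial_y \tilde\psi$ has exactly two sign changes on $\mathbb{T}$ and that any non-trivial function in $L^2_{\rm odd}$ must change sign at least twice on $\mathbb{T}$, so that $\partial_y \tilde\psi$ realizes the minimal oscillation and therefore the lowest eigenvalue in that subspace.
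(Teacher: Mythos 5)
Your main argument is essentially the proof that the paper itself omits: Proposition \ref{prop-odd} is simply imported from \cite{hur} ``with the same proof after translation,'' and the proof in \cite{hur} is exactly this Perron--Frobenius argument for the semigroup restricted to the odd sector, so you have reconstructed the intended route rather than found a new one. Two points in your write-up need tightening. First, for $y,z\in(0,\pi)$ with $y+z>\pi$ the monotonicity of $\tilde p_t$ on $[0,\pi]$ must be applied to the reduced distance $2\pi-(y+z)$; the positivity of $K_t$ then follows from $|y-z|<\min(y+z,\,2\pi-y-z)$, which holds precisely because $\max(y,z)<\pi$ --- your inequality $|y-z|<y+z\leq 2\pi$ alone does not suffice. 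Second, the strict decrease of the periodized stable density on $(0,\pi)$, which you correctly identify as the crux, is not obtainable by termwise comparison of the periodization; the standard proof (valid exactly for $\alpha\in(0,2]$, matching the hypothesis) writes $e^{-t|\xi|^{\alpha}}$ as a Bochner subordination of Gaussians and transfers the classical unimodality of the periodic heat kernel (theta function). Finally, your ``variational alternative'' is not a valid substitute: Proposition \ref{prop-nodal} controls sign changes of eigenfunctions on $L^2(\mathbb{T})$ and $L^2_{\rm even}$, not on the odd sector, and ``minimal oscillation implies lowest eigenvalue'' is precisely the Sturm-type statement that, for a nonlocal operator, has no proof other than the positivity-improving semigroup argument you already gave.
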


\begin{remark}
	Compared to \cite{hur}, the potential of $\mathcal{L}$ is $\psi^2 \in H^{\alpha}_{\rm per}$ rather than $\psi \in H^{\alpha}_{\rm per}$, where we recall that $\alpha > \frac{1}{2}$. This implies that the spatial period of the potential of $\mathcal{L}$ is $\pi$ rather than $2\pi$. Nevertheless, we are interested in the properties of $\mathcal{L}$ on $L^2(\mathbb{T})$ rather than on $L^2(\frac{1}{2} \mathbb{T})$.
\end{remark}

By an elementary application of the implicit function theorem (similarly to Lemma 3.8 in \cite{NPU}), we can also obtain
the following result.

\begin{lemma} 
\label{prop-continuation}
Assume $\alpha \in (\frac{1}{2},2]$ and $\psi_0 \in H^{\alpha}_{\rm per, odd}$ be a 
solution obtained in Theorem \ref{theorem-existence} and Corollary \ref{corollary-existence} for $c = c_0$.
Assume ${\rm Ker}(\mathcal{L} |_{L_{\rm odd}^2})$ is trivial.
Then, there exists a $C^1$ mapping in an open subset of $c_0$ denoted by $\mathcal{I} \subset \mathbb{R}$:
\begin{equation}
\label{map-in-c}
\mathcal{I} \ni c \mapsto \psi(\cdot;c) \in H_{\rm per, odd}^{\alpha}
\end{equation}
such that $\psi(\cdot;c_0) = \psi_0$ and $\mathcal{L} \partial_c \psi(\cdot;c_0) = -\psi_0$.
\end{lemma}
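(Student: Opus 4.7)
The plan is a direct application of the implicit function theorem (IFT) in Banach spaces. I define the nonlinear map
\begin{equation*}
F : H^{\alpha}_{\rm per, odd} \times \mathbb{R} \to L^2_{\rm odd}, \qquad F(\psi, c) := D^{\alpha} \psi + c \psi - 2 \psi^3,
\end{equation*}
which is well defined because $\psi$ odd implies $\psi^3$ odd. Since $\alpha > \frac{1}{2}$, one has the one-dimensional embedding $H^{\alpha}_{\rm per} \hookrightarrow L^{\infty}(\mathbb{T})$, so the cubic Nemytskii map is smooth from $H^{\alpha}_{\rm per}$ into itself and, a fortiori, into $L^2(\mathbb{T})$. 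Hence $F$ is $C^{\infty}$, and by construction $F(\psi_0, c_0) = 0$.

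The Fr\'echet derivative with respect to $\psi$ at $(\psi_0, c_0)$ is the linearized operator $\mathcal{L} = D^{\alpha} + c_0 - 6 \psi_0^2$. Since $\psi_0^2$ is even, multiplication by $\psi_0^2$ preserves parity, and so does the Fourier multiplier $D^{\alpha}$; thus the restriction $\mathcal{L}|_{\rm odd} : H^{\alpha}_{\rm per, odd} \to L^2_{\rm odd}$ is well defined. The key step is to verify that this restriction is a topological isomorphism. It is self-adjoint on $L^2_{\rm odd}$ as a bounded multiplicative perturbation of the self-adjoint Fourier multiplier $D^{\alpha} + c_0$, and inherits compactness of the resolvent from $c_0 + D^{\alpha}$ (the perturbation by the bounded potential $-6 \psi_0^2 \in L^{\infty}$ is relatively compact). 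By the Fredholm alternative, the assumed triviality of ${\rm Ker}(\mathcal{L}|_{L^2_{\rm odd}})$ upgrades to bijectivity, and the open mapping theorem supplies a bounded inverse.

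The IFT now yields an open neighborhood $\mathcal{I} \subset \mathbb{R}$ of $c_0$ and a $C^1$ mapping $c \mapsto \psi(\cdot;c) \in H^{\alpha}_{\rm per, odd}$ satisfying $\psi(\cdot;c_0) = \psi_0$ and $F(\psi(\cdot;c), c) = 0$ for all $c \in \mathcal{I}$. Differentiating the latter identity in $c$ gives
\begin{equation*}
\bigl( D^{\alpha} + c - 6 \psi(\cdot;c)^2 \bigr) \partial_c \psi(\cdot;c) + \psi(\cdot;c) = 0,
\end{equation*}
and evaluating at $c = c_0$ yields the stated identity $\mathcal{L} \partial_c \psi(\cdot;c_0) = -\psi_0$. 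The main obstacle is the invertibility of $\mathcal{L}|_{\rm odd}$ between the prescribed spaces; this step is natural because translational invariance forces $\partial_x \psi_0 \in {\rm Ker}(\mathcal{L})$, but $\partial_x \psi_0$ is \emph{even} (being the derivative of an odd function), so the translational zero mode does not obstruct invertibility on the odd subspace, which is precisely why the hypothesis ${\rm Ker}(\mathcal{L}|_{L^2_{\rm odd}}) = \{0\}$ is the right condition.
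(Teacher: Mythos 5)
Your proposal is correct and follows essentially the same route as the paper: the same map $\Upsilon(c,f)=D^{\alpha}f+cf-2f^3$ on the odd subspace, invertibility of $\mathcal{L}|_{L^2_{\rm odd}}$ via self-adjointness, compactness of the resolvent, and the assumed trivial kernel, followed by the implicit function theorem and differentiation in $c$ to obtain $\mathcal{L}\partial_c\psi(\cdot;c_0)=-\psi_0$. Your closing remark on the parity of $\partial_x\psi_0$ is a nice explicit justification of why the translational zero mode does not obstruct the argument, which the paper leaves implicit.
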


\begin{proof}
	Let $\Upsilon:(-1,\infty)\times H_{\rm per,odd}^{\alpha} \to
	L^2_{\rm odd}(\mathbb{T})$ be defined by $\Upsilon(c,f) := D^{\alpha} f + c f - 2 f^3$. 
	By hypothesis of the lemma, we have $\Upsilon(c_0,\psi_0)=0$. Moreover, $\Upsilon$ is smooth and	its Fr\'echet derivative with respect to $f$ evaluated at $(c_0,\psi_0)$ is given by $\mathcal{L}$ computed at $\psi_0$.	
	Since ${\rm Ker}(\mathcal{L} |_{L_{\rm odd}^2})$ is empty by the assumption, we conclude that $\mathcal{L}$ is one-to-one. It is also onto since its spectrum consists of nonzero isolated eigenvalues with finite algebraic multiplicities because $H_{\rm per,odd}^{\alpha}$
	is compactly embedded in $L^2_{\rm odd}(\mathbb{T})$ if $\alpha > 1/2$ and because $\mathcal{L}$ is a self-adjoint operator. Hence, $\mathcal{L}$ is a bounded linear operator	with a bounded inverse. Thus, since $\Upsilon$ and its derivative with respect to $f$  are smooth maps on their domains,
	the result follows from the implicit function theorem.
\end{proof}

By using Propositions \ref{prop-nodal}, \ref{prop-kernel}, \ref{prop-odd}, and Lemma \ref{prop-continuation}, we
compute $n(\mathcal{L})$ and $z(\mathcal{L})$ in the following lemma.

\begin{lemma}
\label{lemma-index}
Let $\alpha \in (\frac{1}{2},2]$ and $\psi \in H^{\alpha}_{\rm per, odd}$ be a 
solution obtained in Theorem \ref{theorem-existence} and Corollary \ref{corollary-existence}. Then, $n(\mathcal{L}) = 2$ and
$$
z(\mathcal{L}) = \left\{ \begin{array}{l} 1, \quad \mbox{\rm if  } 1 \in {\rm Range}(\mathcal{L}), \\
2, \quad \mbox{\rm if  } 1 \notin {\rm Range}(\mathcal{L}). \end{array} \right.
$$
\end{lemma}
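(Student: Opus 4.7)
The plan is to compute $n(\mathcal{L})$ and $z(\mathcal{L})$ by combining three ingredients: the variational characterization of $\psi$ as a constrained minimizer on $L^2_{\rm odd}$, the symmetry decomposition induced by $\psi(\cdot+\pi)=-\psi$ (so that $\psi^2$ has period $\pi$ and $\mathcal{L}$ commutes with translation by $\pi$, splitting $L^2(\mathbb{T})$ into four subspaces $L^2_{{\rm even},\pm}$ and $L^2_{{\rm odd},\pm}$ indexed by parity about $x=0$ and shift-periodic/antiperiodic type), and Propositions~\ref{prop-nodal}--\ref{prop-odd}.

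For the negative eigenvalue count, I would first exploit the variational characterization: the Hessian $\mathcal{L}$ at the minimizer is nonnegative on the codimension-one tangent subspace $\{v\in L^2_{\rm odd}:\langle\psi^3,v\rangle=0\}$, giving $n(\mathcal{L}|_{L^2_{\rm odd}})\leq 1$; combined with the identity $\mathcal{L}\psi=-4\psi^3$ yielding $\langle\mathcal{L}\psi,\psi\rangle=-4\|\psi\|_{L^4}^4<0$, this produces $n(\mathcal{L}|_{L^2_{\rm odd}})=1$. For the even part, Proposition~\ref{prop-odd} identifies $\partial_x\psi\in L^2_{{\rm even},-}$ as the simple ground state (eigenvalue $0$) of $\mathcal{L}$ on the odd-about-$\pi/2$ subspace $L^2_{{\rm odd},+}\oplus L^2_{{\rm even},-}$, so $\mathcal{L}$ has no negative eigenvalues in either $L^2_{{\rm even},-}$ or $L^2_{{\rm odd},+}$. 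Applying Proposition~\ref{prop-nodal} in $L^2_{\rm even}$: the first eigenfunction is non-sign-changing (hence lies in $L^2_{{\rm even},+}$ and is strictly positive), the second sign-changes at most twice, and $\partial_x\psi$ (with its two sign changes at $x=\pm\pi/2$ and eigenvalue $0$) must be precisely that second eigenfunction. This gives $n(\mathcal{L}|_{L^2_{\rm even}})=1$, whence $n(\mathcal{L})=2$.

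For the kernel count, I decompose any $\eta\in{\rm Ker}(\mathcal{L})$ by the four symmetry classes. Simplicity of the ground state in the odd-about-$\pi/2$ subspace forces the $L^2_{{\rm even},-}$-component to be proportional to $\partial_x\psi$ and the $L^2_{{\rm odd},+}$-component to vanish; a parallel analysis of the even-about-$\pi/2$ subspace $L^2_{{\rm even},+}\oplus L^2_{{\rm odd},-}$, using the positive ground state together with the variational nondegeneracy of $\psi$ in $L^2_{\rm odd}$, rules out any $L^2_{{\rm odd},-}$-kernel. Hence ${\rm Ker}(\mathcal{L})={\rm span}(\partial_x\psi)\oplus{\rm Ker}(\mathcal{L}|_{L^2_{{\rm even},+}})$, with the latter at most one-dimensional by the nodal pinning above. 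Now $1\in{\rm Range}(\mathcal{L})={\rm Ker}(\mathcal{L})^\perp$ reduces to ${\rm Ker}(\mathcal{L}|_{L^2_{{\rm even},+}})\perp 1$ (since $\int\partial_x\psi\,dx=0$ and the other kernel components automatically have zero mean). If this kernel is trivial, $z(\mathcal{L})=1$; if it contains a zero-mean generator $\eta$, then $0=\langle\eta,\mathcal{L}1\rangle=c\int\eta-6\langle\eta,\psi^2\rangle$ gives $\langle\eta,\psi^2\rangle=0$, so $\{1,\psi,\psi^2\}\subset{\rm Range}(\mathcal{L})$ and Proposition~\ref{prop-kernel} forces ${\rm Ker}(\mathcal{L})={\rm span}(\partial_x\psi)$, contradicting nontriviality. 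Therefore any nontrivial ${\rm Ker}(\mathcal{L}|_{L^2_{{\rm even},+}})$ must consist of functions with nonzero mean, yielding $1\notin{\rm Range}(\mathcal{L})$ and $z(\mathcal{L})=2$.

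The hardest part is the nodal identification pinning $\partial_x\psi$ as precisely the second eigenfunction of $\mathcal{L}|_{L^2_{\rm even}}$ rather than a higher one, since Proposition~\ref{prop-nodal}'s fractional bound is weaker than classical Sturm oscillation and only asserted for $n\leq 3$. I expect this to be closed using the shift-by-$\pi$ decomposition in conjunction with Proposition~\ref{prop-odd}'s identification of the ground state, together with the variational nondegeneracy of $\psi$ in $L^2_{\rm odd}$; near bifurcation configurations where degeneracies arise, the argument would need to be handled as separate sub-cases.
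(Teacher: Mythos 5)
Your strategy is the same as the paper's: the variational bound plus $\langle \mathcal{L}\psi,\psi\rangle = -4\|\psi\|_{L^4}^4<0$ gives $n(\mathcal{L}|_{L^2_{\rm odd}})=1$; Propositions \ref{prop-nodal} and \ref{prop-odd} together with the symmetry of $\psi^2$ about $x=0$ and $x=\pi/2$ handle the even subspace; and the dichotomy $\langle 1,f\rangle=0$ versus $\langle 1,f\rangle\neq 0$ combined with Proposition \ref{prop-kernel} gives the kernel count (your computation $0=\langle \eta,\mathcal{L}1\rangle \Rightarrow \langle \eta,\psi^2\rangle=0$ is the paper's argument in a slightly different order). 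However, two load-bearing steps are left open. The first is the one you flag: the bound $n(\mathcal{L}|_{L^2_{\rm even}})\leq 1$ does not follow from ``pinning $\partial_x\psi$ as the second eigenfunction,'' because Proposition \ref{prop-nodal} only bounds sign changes from above and cannot prevent $\partial_x\psi$ from sitting at the third eigenvalue of $\mathcal{L}|_{L^2_{\rm even}}$. The missing idea is an orthogonality contradiction: if the second eigenvalue of $\mathcal{L}|_{L^2_{\rm even}}$ were negative, its eigenfunction would have exactly two sign changes (at least two, to be orthogonal to the sign-definite ground state; at most two, by Proposition \ref{prop-nodal}); the reflection symmetry about $x=\pi/2$ forces it to be odd about $\pi/2$ (a function even about both $x=0$ and $x=\pi/2$ is $\pi$-periodic and cannot have exactly two sign changes on $\mathbb{T}$), so its nodes are at $x=\pm\pi/2$, precisely the nodes of $\partial_x\psi$. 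The two functions then have a sign-definite product and cannot be orthogonal, contradicting that they are eigenfunctions of $\mathcal{L}|_{L^2_{\rm even}}$ for distinct eigenvalues. This is exactly how the paper closes the count.

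The second gap is unacknowledged: you rule out a kernel element in the symmetry class of $\psi$ itself (odd about $0$, even about $\pi/2$) by appealing to ``the variational nondegeneracy of $\psi$ in $L^2_{\rm odd}$,'' but the constrained minimization only yields $n(\mathcal{L}|_{L^2_{\rm odd}})\leq 1$ and says nothing about ${\rm Ker}(\mathcal{L}|_{L^2_{\rm odd}})$. In fact any $g\in{\rm Ker}(\mathcal{L})\cap L^2_{\rm odd}$ automatically satisfies $\langle g,\psi^3\rangle = -\tfrac{1}{4}\langle g,\mathcal{L}\psi\rangle = 0$, so it lies in the tangent space of the constraint and is fully compatible with second-order minimality; nondegeneracy must be proved, not assumed. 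The paper establishes $z(\mathcal{L}|_{L^2_{\rm odd}})=0$ by a separate nodal/parity argument: a second zero eigenvalue would be the fourth eigenvalue of $\mathcal{L}$, whose eigenfunctions have four nodes and the same parity about $x=0$ and $x=\pi/2$, and the ordering fixed by Proposition \ref{prop-odd} places the odd such eigenfunction at a positive eigenvalue. This fact is also what justifies invoking Lemma \ref{prop-continuation} and the relation $\mathcal{L}\partial_c\psi=-\psi$ later in the section, so it cannot be passed over.
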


\begin{proof}
Since $\psi \in H^{\alpha}_{\rm per, odd}$ is a minimizer of the constrained variational problem
(\ref{minBfunc}) with only one constraint, we have $n (\mathcal{L} |_{L^2_{\rm odd}}) \leq 1$.
On the other hand, we have
$$
\langle \mathcal{L} \psi, \psi \rangle_{L^2} = - 4 \| \psi \|^4_{L^4} < 0,
$$
with odd $\psi$, hence $n(\mathcal{L} |_{L^2_{\rm odd}}) \geq 1$, so that $n(\mathcal{L} |_{L^2_{\rm odd}}) = 1$.

Since $\partial_x \psi \in {\rm Ker}(\mathcal{L})$ and $\partial_x \psi$ is even with two nodes on $\mathbb{T}$,
then by Proposition \ref{prop-nodal}, $0$ is not the first eigenvalue of $\mathcal{L} |_{L^2_{\rm even}}$, so that
$n(\mathcal{L} |_{L^2_{\rm even}}) \geq 1$. However, another negative eigenvalue of $\mathcal{L} |_{L^2_{\rm even}}$
is impossible since the eigenfunction for the second eigenvalue of $\mathcal{L} |_{L^2_{\rm even}}$ must have two nodes
by Proposition \ref{prop-nodal} and the nodes are located at the symmetry points $x = \pm \pi/2$, hence
this eigenfunction is not orthogonal to $\partial_x \psi \in {\rm Ker}(\mathcal{L})$. Therefore,
$0$ is the second eigenvalue of $\mathcal{L} |_{L^2_{\rm even}}$, which yields $n(\mathcal{L} |_{L^2_{\rm even}}) = 1$
and
$$
n(\mathcal{L}) = n(\mathcal{L} |_{L^2_{\rm odd}}) + n(\mathcal{L} |_{L^2_{\rm even}}) = 2.
$$

It remains to consider $z(\mathcal{L}) \geq 1$. By the symmetry of $\psi$ and $\psi^2$,
the operator $\mathcal{L}$ in (\ref{operator})
and (\ref{operator-c}) has a $\pi$-periodic potential, which is even with respect to both $x = 0$ and $x = \pi/2$.
The negative eigenvalue of $\mathcal{L}$ in $L^2_{\rm even}$, which is the lowest eigenvalue of $\mathcal{L}$ in $L^2(\mathbb{T})$, 
corresponds to the sign-definite $\pi$-periodic function, which is even with respect to both
$x = 0$ and $x = \pi/2$. The negative eigenvalue in $L^2_{\rm odd}$, which is the second eigenvalue of $\mathcal{L}$ in $L^2(\mathbb{T})$, corresponds to the eigenfunction
with two nodes on $\mathbb{T}$, which is even with respect to $x = \pi/2$.
The eigenfunction $\partial_x \psi$ for the zero eigenvalue in $L^2_{\rm even}$, which is the third
eigenvalue of $\mathcal{L}$ in $L^2(\mathbb{T})$, has two nodes and is odd with respect to $x = \pi/2$. By Proposition \ref{prop-odd},
the zero eigenvalue is the lowest eigenvalue for the eigenfunctions that are odd with respect to $x = \pi/2$.

Finally, we consider the eigenfunctions with four nodes on $\mathbb{T}$ since $0$ is the third eigenvalue of $\mathcal{L}$. 
These eigenfunctions have the same parity with respect to $x = 0$ and $x = \pi/2$. It follows from the previous
argument that the odd eigenfunction
of $\mathcal{L}$ in $L^2_{\rm odd}$ with four nodes corresponds to the positive eigenvalue of $\mathcal{L}$.
Therefore, $z(\mathcal{L} |_{L^2_{\rm odd}}) = 0$ and by Lemma \ref{prop-continuation},
the mapping $c \mapsto \psi(\cdot;c)$ is $C^1$ in $c$ with $\mathcal{L} \partial_c \psi = -\psi$,
so that $\psi \in {\rm Range}(\mathcal{L})$.

Assume that the even eigenfunction of $\mathcal{L}$
in $L^2_{\rm even}$ with four nodes (call it $f$) belongs to ${\rm Ker}(\mathcal{L})$, hence ${\rm Ker}(\mathcal{L}) = {\rm span}(\partial_x \psi, f)$.
Either $\langle 1, f \rangle_{L^2} = 0$ or $\langle 1, f \rangle_{L^2} \neq 0$.
If $\langle 1, f \rangle_{L^2} = 0$, then $1 \in {\rm Range}(\mathcal{L})$. It follows from (\ref{range-1}) that
if $1 \in {\rm Range}(\mathcal{L})$, then $\psi^2 \in {\rm Range}(\mathcal{L})$.
Therefore, $\{ 1, \psi, \psi^2\} \in {\rm Range}(\mathcal{L})$ and by Proposition \ref{prop-kernel},
${\rm Ker}(\mathcal{L}) = {\rm span}(\partial_x \psi)$, so that $f$ does not belong to ${\rm Ker}(\mathcal{L})$.
Hence, $z(\mathcal{L}) = 1$ if $1 \in {\rm Range}(\mathcal{L})$.
If $\langle 1, f \rangle_{L^2} \neq 0$, then $1 \notin {\rm Range}(\mathcal{L})$ because ${\rm Range}(\mathcal{L})$
is orthogonal to ${\rm Ker}(\mathcal{L})$. Hence, $z(\mathcal{L}) = 2$ if and only if $1 \notin {\rm Range}(\mathcal{L})$.
\end{proof}

Next, we introduce the subspace of $L^2$ with zero mean and denote it by $X_0$:
\begin{equation}
\label{zero}
X_0 := \Big\{f \in L^2(\mathbb{T}): \quad \int_{-\pi}^{\pi} f(x) dx = 0 \Big\}.
\end{equation}
Denote $\Pi_0 \mathcal{L} \Pi_0$ by $\mathcal{L} |_{X_0}$. By an explicit computation, it follows
that if $f \in H^{\alpha}_{\rm per} \cap X_0$, then
\begin{equation}
\label{operator-zero}
\mathcal{L}|_{X_0} f := \mathcal{L} f + \frac{3}{\pi} \langle f, \psi^2\rangle.
\end{equation}
The following result is similar to Lemma 3.5 in \cite{NPU}.

\begin{lemma}
\label{lem-mean-value}
Let $\alpha \in (\frac{1}{2},2]$ and $\psi \in H^{\alpha}_{\rm per, odd}$ be a 
solution obtained in Theorem \ref{theorem-existence} and Corollary \ref{corollary-existence}. If there exists a nonzero $f \in {\rm Ker}(\mathcal{L} |_{X_0})$
such that $\langle f, \partial_x \psi \rangle = 0$, then
\begin{equation}
\label{kernel-mean-value}
z(\mathcal{L}) = 1, \;\; \mbox{\rm and} \quad \langle f, \psi^2 \rangle \neq 0.
\end{equation}
\end{lemma}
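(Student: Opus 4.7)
The plan is to split the argument on whether $\langle f, \psi^2 \rangle$ vanishes or not. The formula \eqref{operator-zero} rewrites the hypothesis $\mathcal{L}|_{X_0} f = 0$ as
\begin{equation*}
\mathcal{L} f = -\frac{3}{\pi} \langle f, \psi^2 \rangle \cdot 1.
\end{equation*}
If $\langle f,\psi^2\rangle \neq 0$, this identity exhibits the constant function $1$ as an element of $\mathrm{Range}(\mathcal{L})$, so Lemma \ref{lemma-index} directly gives $z(\mathcal{L})=1$, and both conclusions of the lemma are established simultaneously. The whole content is therefore to rule out $\langle f,\psi^2 \rangle = 0$.

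So assume for contradiction that $\langle f,\psi^2\rangle = 0$. Then $\mathcal{L} f = 0$, i.e. $f \in \mathrm{Ker}(\mathcal{L})$, and we distinguish the two cases supplied by Lemma \ref{lemma-index}. If $z(\mathcal{L}) = 1$, then $\mathrm{Ker}(\mathcal{L}) = \mathrm{span}(\partial_x \psi)$, so $f = \lambda\,\partial_x\psi$ for some $\lambda \in \mathbb{R}$; the orthogonality hypothesis $\langle f, \partial_x\psi\rangle = 0$ forces $\lambda \|\partial_x\psi\|_{L^2}^2 = 0$ and hence $f \equiv 0$, contradicting $f \neq 0$.

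If instead $z(\mathcal{L}) = 2$, then by the proof of Lemma \ref{lemma-index} the kernel is $\mathrm{Ker}(\mathcal{L}) = \mathrm{span}(\partial_x\psi, g)$, where $g$ is the even, four-nodes eigenfunction characterized there by $\langle 1,g\rangle \neq 0$. Since $\psi$ is odd with respect to $x=0$ and even with respect to $x=\pi/2$, the function $\partial_x\psi$ is even w.r.t.\ $x=0$ and odd w.r.t.\ $x=\pi/2$, whereas $g$ has the same (even) parity with respect to both symmetry points. This mismatch in parity about $x=\pi/2$ gives $\langle g, \partial_x\psi\rangle = 0$. Writing $f = a\,\partial_x\psi + b\,g$, the assumption $\langle f,\partial_x\psi\rangle = 0$ forces $a = 0$, and then $f \in X_0$, i.e. $\langle f, 1\rangle = 0$, combined with $\langle g,1\rangle \neq 0$, forces $b = 0$. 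This contradicts $f \neq 0$ and completes the proof.

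The main point requiring care is not any estimate but the parity bookkeeping with respect to the two symmetry points $x = 0$ and $x = \pi/2$ used in the $z(\mathcal{L})=2$ case; I would justify it by quoting the structural description of the extra kernel element from the proof of Lemma \ref{lemma-index} (based on Propositions \ref{prop-nodal} and \ref{prop-odd}), namely that such an eigenfunction has four nodes and is even with respect to both $x=0$ and $x=\pi/2$, and that $z(\mathcal{L})=2$ is characterized exactly by $\langle 1,g\rangle \neq 0$.
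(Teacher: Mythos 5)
Your proof is correct and follows the paper's argument: the same dichotomy on $\langle f,\psi^2\rangle$ based on the identity $\mathcal{L} f = -\frac{3}{\pi}\langle f,\psi^2\rangle$, with the case $\langle f,\psi^2\rangle\neq 0$ handled identically via Lemma \ref{lemma-index}. The only divergence is in ruling out $\langle f,\psi^2\rangle=0$: the paper observes that this case gives $z(\mathcal{L})=2$, hence $1\notin{\rm Range}(\mathcal{L})$, while $1\perp{\rm span}(\partial_x\psi,f)={\rm Ker}(\mathcal{L})$ yields $1\in{\rm Range}(\mathcal{L})$ by self-adjointness --- a contradiction reached without your parity bookkeeping for the extra kernel element, though your more explicit version is also valid.
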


\begin{proof}
Since $f \in {\rm Ker}(\mathcal{L} |_{X_0})$, then $\langle 1, f \rangle = 0$ and
$f$ satisfies
\begin{equation}
\label{kernel-X-0}
\mathcal{L} f = - \frac{3}{\pi} \langle f, \psi^2\rangle.
\end{equation}
Either $\langle f, \psi^2 \rangle = 0$ or $\langle f, \psi^2 \rangle \neq 0$.

If $\langle f, \psi^2 \rangle = 0$, then $f \in {\rm Ker}(\mathcal{L})$ so that $z(\mathcal{L}) = 2$
and $1 \notin {\rm Range}(\mathcal{L})$ by Lemma \ref{lemma-index}.  However,
$1 \perp {\rm span}(\partial_x \psi, f) = {\rm Ker}(\mathcal{L})$ implies $1 \in {\rm Range}(\mathcal{L})$, which is a contradiction.

If $\langle f, \psi^2 \rangle \neq 0$, then it follows from (\ref{kernel-X-0}) that $1 \in {\rm Range}(\mathcal{L})$
and hence $z(\mathcal{L}) = 1$ by Lemma \ref{lemma-index}. 
This yields (\ref{kernel-mean-value}).
\end{proof}

\begin{remark}
Assuming $1 \in {\rm Range}(\mathcal{L})$, let us define $\sigma_0 := \langle \mathcal{L}^{-1} 1, 1 \rangle$.
Then, $z(\mathcal{L}|_{X_0}) = 2$ if and only if $\sigma_0 = 0$. On the other hand,
$z(\mathcal{L}) = 2$ if and only if $\sigma_0$ is unbounded.
\end{remark}

Next, we consider if the ground state of the variational problem (\ref{minBfunc}) with a single constraint
is a local minimizer of the following variational problem with two constraints:
\begin{equation}
\label{infB}
r_c := \inf_{u\in H^{\frac{\alpha}{2}}_{\rm per}} \left\{ \mathcal{B}_c(u) : \quad \int_{-\pi}^{\pi} u^4 dx = 1, \quad \int_{-\pi}^{\pi} u dx = 0 \right\}.
\end{equation}
It is clear that $r_c \leq q_{c, {\rm odd}}$ and therefore, minimizers of (\ref{minBfunc}) could be saddle points of (\ref{infB}).
The following lemma provides the relevant criterion.

\begin{lemma}
\label{lemma-characterization}
Let $\alpha \in (\frac{1}{2},2]$ and $\psi \in H^{\alpha}_{\rm per, odd}$ be a 
solution obtained in Theorem \ref{theorem-existence} and Corollary \ref{corollary-existence}.
If $1 \in {\rm Range}(\mathcal{L})$, then
\begin{equation}
n(\mathcal{L}|_{\{1, \psi^3 \}^{\perp}}) = \left\{ \begin{array}{ll} 0, \quad & \sigma_0 \leq 0, \\
1, \quad & \sigma_0 > 0, \end{array}\right. \quad
z(\mathcal{L}|_{\{1, \psi^3 \}^{\perp}}) = \left\{ \begin{array}{ll} 1, \quad & \sigma_0 \neq 0, \\
2, \quad & \sigma_0 = 0, \end{array}\right.
\label{count-constraints}
\end{equation}
where $\sigma_0 := \langle \mathcal{L}^{-1} 1, 1 \rangle$.
If $1 \notin {\rm Range}(\mathcal{L})$, then
\begin{equation}
n(\mathcal{L}|_{\{1, \psi^3 \}^{\perp}}) = 1, \quad
z(\mathcal{L}|_{\{1, \psi^3 \}^{\perp}}) = 1.
\label{count-constraints-not-in-range}
\end{equation}
\end{lemma}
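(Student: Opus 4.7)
The plan is to apply the classical constraint-counting identity: for a self-adjoint operator $\mathcal{L}$ with finite $n(\mathcal{L})$, compact resolvent, and linearly independent constraints $v_1, \ldots, v_N \in {\rm Range}(\mathcal{L})$, the restricted operator on $V^\perp$ with $V = {\rm span}(v_1, \ldots, v_N)$ satisfies
\begin{equation*}
n(\mathcal{L}|_{V^\perp}) = n(\mathcal{L}) - n_-(D) - z(D), \qquad z(\mathcal{L}|_{V^\perp}) = z(\mathcal{L}) + z(D),
\end{equation*}
where $D_{ij} := \langle \mathcal{L}^{-1} v_i, v_j \rangle$ for any preimages. As a preliminary, relation (\ref{range-2}) shows that $\psi^3 = -\frac{1}{4}\mathcal{L}\psi$ lies in ${\rm Range}(\mathcal{L})$ unconditionally, with preimage $-\psi/4$, which is odd and hence orthogonal to the even kernel vector $\partial_x \psi$.

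In the case $1 \in {\rm Range}(\mathcal{L})$, Lemma \ref{lemma-index} gives $n(\mathcal{L}) = 2$ and $z(\mathcal{L}) = 1$, and I apply the identity with $v_1 = 1$, $v_2 = \psi^3$. Since the potential $\psi^2$ is even, $\mathcal{L}$ commutes with the parity reflection, so $\mathcal{L}^{-1} 1$ may be chosen even while $\mathcal{L}^{-1} \psi^3 = -\psi/4$ is odd. Consequently the constraint matrix is diagonal, $D = {\rm diag}\left(\sigma_0,\, -\tfrac{1}{4}\|\psi\|_{L^4}^4\right)$, with a strictly negative second entry. A case analysis on the sign of $\sigma_0$ yields $(n_-(D), z(D)) = (2,0), (1,1), (1,0)$ for $\sigma_0 < 0$, $\sigma_0 = 0$, $\sigma_0 > 0$ respectively, which gives (\ref{count-constraints}) directly.

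The case $1 \notin {\rm Range}(\mathcal{L})$ is the main obstacle since the formula cannot be applied to a constraint having a nontrivial kernel component. By Lemma \ref{lemma-index}, ${\rm Ker}(\mathcal{L}) = {\rm span}(\partial_x\psi, f)$ with $f$ even and $\langle f, 1\rangle \neq 0$; write $1 = \gamma f + 1_R$ with $\gamma \neq 0$ and $1_R \in {\rm Range}(\mathcal{L})$. The key observation is that the constraint $\langle u, 1 \rangle = 0$ can be replaced by $\langle u, f\rangle = 0$ without changing the inertia, because the map
\begin{equation*}
u_R \mapsto u = u_R - \frac{\langle u_R, 1\rangle}{\langle f, 1\rangle} f
\end{equation*}
is a linear isomorphism from $\{f\}^\perp$ onto $\{1\}^\perp$ preserving the quadratic form $\langle \mathcal{L}\cdot, \cdot\rangle$ (because $\mathcal{L} f = 0$), and also preserving the secondary constraint $\langle \cdot, \psi^3\rangle = 0$ (because $\langle f, \psi^3\rangle = 0$ by parity). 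Hence $\mathcal{L}|_{\{1, \psi^3\}^\perp}$ and $\mathcal{L}|_{\{f, \psi^3\}^\perp}$ share the same inertia. Restricting first to $\{f\}^\perp$ removes exactly one kernel direction, so $n(\mathcal{L}|_{\{f\}^\perp}) = 2$ and $z(\mathcal{L}|_{\{f\}^\perp}) = 1$; then the single $\psi^3$-constraint has scalar $D = \langle -\psi/4, \psi^3\rangle = -\tfrac{1}{4}\|\psi\|_{L^4}^4 < 0$ and decreases $n$ by one, yielding (\ref{count-constraints-not-in-range}).

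The principal difficulty lies in the second case: handling a constraint that is not in ${\rm Range}(\mathcal{L})$ requires the quadratic-form-preserving replacement $1 \leadsto f$ described above, after which the standard formula applies routinely.
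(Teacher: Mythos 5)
Your proof is correct. In the case $1 \in {\rm Range}(\mathcal{L})$ you follow essentially the paper's route: both arguments build the symmetric $2\times 2$ constraint matrix, find it equal to ${\rm diag}\left(\sigma_0, -\tfrac{1}{4}\|\psi\|_{L^4}^4\right)$ (the paper obtains the vanishing off-diagonal entries from $\mathcal{L}^{-1}\psi^3 = -\tfrac{1}{4}\psi$ and $\langle \psi,1\rangle = 0$, which is equivalent to your parity argument), and then apply the index identities of Theorem 4.1 in \cite{Pel-book} with $n(\mathcal{L})=2$, $z(\mathcal{L})=1$ from Lemma \ref{lemma-index}. Where you genuinely diverge is the case $1 \notin {\rm Range}(\mathcal{L})$: the paper again cites Theorem 4.1 in \cite{Pel-book}, now in its extended form in which the divergence of $\langle (\mathcal{L}-\lambda I)^{-1}1,1\rangle$ as $\lambda \to 0$ contributes a correction $z_{\infty}=1$ to the zero count, with $n_0 = 1$ and $z_0 = 0$ read off from the finite entries. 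You instead eliminate the offending constraint by trading $1$ for the even kernel vector $f$ through the explicit isomorphism $u_R \mapsto u_R - \langle u_R,1\rangle \langle f,1\rangle^{-1} f$, which preserves the quadratic form because $\mathcal{L}f = 0$ and preserves the second constraint because $\langle f,\psi^3\rangle = 0$; restricting off a kernel direction and then imposing the single in-range constraint $\psi^3$ gives $(n,z)=(1,1)$. Your reduction is self-contained and avoids the $z_{\infty}$ machinery altogether, which is a genuine simplification here; its applicability rests on the specific structure of this problem (the non-range constraint pairs nontrivially with exactly one kernel vector, and the other constraint is orthogonal to that vector), whereas the paper's citation handles the singular case wholesale. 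Both computations yield identical counts in all cases.
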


\begin{proof}
By using the result of Theorem 4.1 in \cite{Pel-book},
we construct the following symmetric $2$-by-$2$ matrix related to the two constraints in (\ref{infB}):
$$
P(\lambda) := \left[\begin{array}{cc}\langle (\mathcal{L} - \lambda I)^{-1} \psi^3,\psi^3 \rangle &
\langle (\mathcal{L} - \lambda I)^{-1}\psi^3, 1 \rangle \\
\langle (\mathcal{L} - \lambda I)^{-1} 1, \psi^3 \rangle &
\langle (\mathcal{L} - \lambda I)^{-1}1,1\rangle
\end{array}\right].
$$
If $1 \in {\rm Range}(\mathcal{L})$, then
\begin{eqnarray}
\label{projection}
\langle \mathcal{L}^{-1}1,1\rangle = \sigma_0, \quad
\langle \mathcal{L}^{-1} 1, \psi^3 \rangle = \langle \mathcal{L}^{-1} \psi^3, 1 \rangle  = 0, \quad
\langle \mathcal{L}^{-1} \psi^3, \psi^3 \rangle = -\frac{1}{4} \int_{-\pi}^{\pi} \psi^4 dx,
\end{eqnarray}
thanks to equation (\ref{range-2}).
By Theorem 4.1 in \cite{Pel-book}, we have the following identities:
\begin{equation}
\label{identneg}
\left\{ \begin{array}{l}
n(\mathcal{L} \big|_{\{1,\psi^3\}^{\bot}}) = n(\mathcal{L}) - n_0 - z_0, \\
z(\mathcal{L} \big|_{\{1,\psi^3\}^{\bot}}) = z(\mathcal{L}) + z_0,
\end{array} \right.
\end{equation}
where $n_0$ and $z_0$ are the numbers of negative and zero eigenvalues of $P(0)$.
Since $n(\mathcal{L}) = 2$ and $z(\mathcal{L}) = 1$ by Lemma \ref{lemma-index},
the count (\ref{identneg}) yields (\ref{count-constraints}) due to (\ref{projection}).

If $1 \notin {\rm Range}(\mathcal{L})$, then $z(\mathcal{L}) = 2$ but $z(\mathcal{L}|_{X_0}) = 1$ by Lemma \ref{lem-mean-value}. 
By Theorem 4.1 in \cite{Pel-book}, the count (\ref{identnegLL})
must be replaced by
\begin{equation}\label{identneg-singular}
\left\{ \begin{array}{l}
n(\mathcal{L} \big|_{\{1,\psi^3\}^{\bot}}) = n(\mathcal{L}) - n_0 - z_0, \\
z(\mathcal{L} \big|_{\{1,\psi^3\}^{\bot}}) = z(\mathcal{L}) + z_0 - z_{\infty},
\end{array} \right.
\end{equation}
where $z_{\infty} = 1$, $z_0 = 0$, and $n_0 = 1$. The count (\ref{identneg-singular}) yields (\ref{count-constraints-not-in-range}).
\end{proof}

It follows by Lemma \ref{lemma-characterization} that the ground state of the variational problem
(\ref{minBfunc}) is a local minimizer of the variational problem (\ref{infB}) if $\sigma_0 \leq 0$,
which is only degenerate by the translational symmetry if $\sigma_0 \neq 0$, whereas it is the saddle
point of the variational problem (\ref{infB}) if $\sigma_0 > 0$ or if 
$1 \notin  {\rm Range}(\mathcal{L})$, in which case $\sigma_0$ is unbounded.

Equipped with the variational characterization of Lemma \ref{lemma-characterization}, we can clarify the spectral stability of the odd periodic waves. The following theorem gives the relevant result.

\begin{theorem}
	\label{theorem-stability}
Let $\alpha \in (\frac{1}{2},2]$ and $\psi \in H^{\alpha}_{\rm per, odd}$ be a 
solution obtained in Theorem \ref{theorem-existence} and Corollary \ref{corollary-existence}.
If $1 \in {\rm Range}(\mathcal{L})$, then the periodic wave is spectrally stable if
\begin{equation}
\sigma_0 \leq 0, \quad \frac{d}{dc} \| \psi \|_{L^2}^2 \geq 0,
\label{stability-constraints}
\end{equation}
and is spectrally unstable with exactly one real positive eigenvalue of $\partial_x \mathbb{L}$ in $L^2(\mathbb{T})$ if
\begin{equation}
{\rm either} \;\; \sigma_0 \frac{d}{dc} \| \psi \|_{L^2}^2 > 0 
\quad \mbox{\rm or} \;\; \sigma_0 = 0, \;\; \frac{d}{dc} \| \psi \|_{L^2}^2 < 0, \quad \mbox{\rm or} \;\; \sigma_0 > 0, \;\; \frac{d}{dc} \| \psi \|_{L^2}^2 = 0,
\label{instability-constraints}
\end{equation}
where $\sigma_0 := \langle \mathcal{L}^{-1} 1, 1 \rangle$.
If $1 \notin {\rm Range}(\mathcal{L})$, then
the periodic wave is spectrally unstable with exactly one real positive eigenvalue of $\partial_x \mathbb{L}$ in $L^2(\mathbb{T})$ if
\begin{equation}
\label{instability-constraints-singular}
\frac{d}{dc} \| \psi \|^2_{L^2} \geq 0.
\end{equation}
\end{theorem}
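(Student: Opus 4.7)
The plan is to apply a Hamiltonian--Krein instability index theorem, in the form developed in \cite{Pel-book}, adapted to the degenerate symplectic operator $J = \partial_x$, and to combine it with the index counts of Lemmas \ref{lemma-index} and \ref{lemma-characterization}. First I reduce the spectral problem $\partial_x\mathcal{L} v = \lambda v$ to the zero-mean subspace $X_0$: integration of the eigenvalue relation against the constant $1$ forces $\int_{-\pi}^{\pi} v\,dx = 0$ for every $\lambda \neq 0$, so the nonzero $L^2(\mathbb{T})$-spectrum of $\partial_x\mathcal{L}$ coincides with that of $\partial_x\mathcal{L}|_{X_0}$ in view of (\ref{operator-zero}). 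This turns the instability analysis into a self-adjoint/skew-adjoint pair problem to which the classical index theory applies.

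Second, I describe the generalized kernel of $\partial_x\mathcal{L}$ in $L^2(\mathbb{T})$. The translational zero mode $\psi_x \in {\rm Ker}(\mathcal{L})$ is always present. Since $z(\mathcal{L}|_{L^2_{\rm odd}}) = 0$ is established in the proof of Lemma \ref{lemma-index}, Lemma \ref{prop-continuation} produces a $C^1$ family with $\mathcal{L}\partial_c\psi = -\psi$, so that $\partial_x\mathcal{L}(\partial_c\psi) = -\psi_x$ yields a Jordan block of length at least two at $\lambda = 0$. A length-three chain appears precisely when the Fredholm solvability condition $\langle\partial_c\psi,\psi\rangle = \tfrac12 \tfrac{d}{dc}\|\psi\|_{L^2}^2 = 0$ holds, with extra contributions when $1\notin{\rm Range}(\mathcal{L})$ from the additional element $f \in {\rm Ker}(\mathcal{L})$ supplied by Lemma \ref{lemma-index}.

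Third, the central step is the Hamiltonian--Krein identity
\[
k_r + 2k_c + 2k_i^- \;=\; n\bigl(\mathcal{L}|_{\{1,\psi^3\}^\perp}\bigr) \;-\; n(D),
\]
where $k_r, k_c, k_i^-$ count the unstable real eigenvalues, unstable complex quadruplets, and purely imaginary eigenvalues of negative Krein signature of $\partial_x\mathcal{L}$, and $D$ is a Hermitian matrix assembled from the directions $\{\partial_c\psi,\ldots\}$ in the generalized kernel. The constraint subspace $\{1,\psi^3\}^\perp$ reflects both the mass constraint (orthogonality to $1$) and, via (\ref{range-2}), the momentum (orthogonality to $\mathcal{L}\psi = -4\psi^3$). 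Lemma \ref{lemma-characterization} supplies $n(\mathcal{L}|_{\{1,\psi^3\}^\perp})$ in terms of $\sigma_0$, while $n(D)$ is determined by $\langle\mathcal{L}\partial_c\psi,\partial_c\psi\rangle = -\tfrac12\tfrac{d}{dc}\|\psi\|_{L^2}^2$. A case-by-case inspection of the signs of $\sigma_0$ and $\tfrac{d}{dc}\|\psi\|_{L^2}^2$ then delivers the dichotomy (\ref{stability-constraints})--(\ref{instability-constraints}), and in the case $1\notin{\rm Range}(\mathcal{L})$ the modified count (\ref{identneg-singular}) with $z_\infty = 1$ yields (\ref{instability-constraints-singular}).

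The main obstacle will be the singular case $1\notin{\rm Range}(\mathcal{L})$, where $\sigma_0$ is undefined, the $(2,2)$-entry of $P(0)$ is unbounded, and the generalized kernel of $\partial_x\mathcal{L}$ is enlarged by the element $f \in {\rm Ker}(\mathcal{L})$ with $\langle f,1\rangle \neq 0$. One must check that the index identity remains valid with the correction term $z_\infty = 1$ exactly accounting for the additional Jordan chain attached to $\lambda = 0$, and that the corresponding instability has the predicted sign. A secondary subtlety is the boundary cases $\sigma_0 = 0$ and $\tfrac{d}{dc}\|\psi\|_{L^2}^2 = 0$, where either $P(0)$ or $D$ becomes degenerate; these should be handled by a continuity/perturbation argument from the generic side of the bifurcation curve in order to pin down $k_r = 1$ at the degeneracy.
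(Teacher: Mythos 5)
Your overall strategy (reduce to the zero--mean subspace, analyze the generalized kernel of $\partial_x\mathcal{L}$, and apply a Hamiltonian--Krein index count in the spirit of \cite{Pel-book}) is the same as the paper's, but your central identity uses the wrong constrained operator, and this breaks the count. The constraints relevant for the spectral stability of the mKdV flow are the gradients of the conserved quantities $M$ and $F$, namely $1$ and $\psi$; the paper works with $\mathcal{L}\big|_{\{1,\psi\}^{\perp}}$ and the matrix $D(0)$ whose diagonal entries are $\sigma_0$ and $\langle \mathcal{L}^{-1}\psi,\psi\rangle = -\tfrac12\tfrac{d}{dc}\|\psi\|^2_{L^2}$. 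You instead take $n\bigl(\mathcal{L}|_{\{1,\psi^3\}^{\perp}}\bigr)$ from Lemma \ref{lemma-characterization}, justifying the $\psi^3$ direction by ``orthogonality to $\mathcal{L}\psi$.'' That conflates orthogonality to $F'(\psi)=\psi$ with orthogonality to $\mathcal{L}\psi=-4\psi^3$; the $\{1,\psi^3\}^{\perp}$ count pertains to the variational problem (\ref{infB}) with the (non-conserved) $L^4$ constraint, not to the dynamics. Since $\langle\mathcal{L}^{-1}\psi^3,\psi^3\rangle=-\tfrac14\|\psi\|^4_{L^4}<0$ always, one has $n\bigl(\mathcal{L}|_{\{1,\psi^3\}^{\perp}}\bigr)=n\bigl(\mathcal{L}|_{X_0}\bigr)-1$, so your formula $k_r+2k_c+2k_i^- = n\bigl(\mathcal{L}|_{\{1,\psi^3\}^{\perp}}\bigr)-n(D)$ systematically undercounts by one. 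Concretely: for $\sigma_0\le 0$ and $\tfrac{d}{dc}\|\psi\|^2_{L^2}>0$ (the stable regime) it returns $0-1=-1$, which is impossible; for $\sigma_0<0$ and $\tfrac{d}{dc}\|\psi\|^2_{L^2}<0$ (unstable by (\ref{instability-constraints})) it returns $0-0=0$, i.e.\ it predicts stability. The correct versions are either $k_r+2k_c+2k_i^-\le n\bigl(\mathcal{L}|_{\{1,\psi\}^{\perp}}\bigr)$ with both constraint directions $1$ and $\psi$ absorbed into the constrained operator (the paper's route, with $n(\mathcal{L})=2$, $z(\mathcal{L})=1$ from Lemma \ref{lemma-index} and $D(0)={\rm diag}\bigl(-\tfrac12\tfrac{d}{dc}\|\psi\|^2_{L^2},\,\sigma_0\bigr)$), or the equivalent two-step count $n\bigl(\mathcal{L}|_{X_0}\bigr)-n(D)$ with only the ${\rm Ker}(\partial_x)$ constraint in the first factor. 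You cannot mix the $\psi^3$ constraint into the first factor and then also subtract $n(D)$.

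The same substitution error propagates to your treatment of the case $1\notin{\rm Range}(\mathcal{L})$: you invoke the modified count (\ref{identneg-singular}) from Lemma \ref{lemma-characterization}, whereas the conclusion (\ref{instability-constraints-singular}) follows from the $\{1,\psi\}$-version (\ref{identnegLL-singular}), where $z_\infty=1$ and $n_0+z_0=1$ is read off from the single finite entry $\langle\mathcal{L}^{-1}\psi,\psi\rangle=-\tfrac12\tfrac{d}{dc}\|\psi\|^2_{L^2}\le 0$. Your remaining ingredients --- the reduction to $X_0$, the Jordan-chain description of the generalized kernel, and the perturbative handling of the boundary cases $\sigma_0=0$ and $\tfrac{d}{dc}\|\psi\|^2_{L^2}=0$ --- are consistent with the paper, but the proof does not close until the constraint set is corrected.
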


\begin{proof}
	It is well-known \cite{haragus} that the periodic wave $\psi$ is spectrally stable
	if it is a constrained minimizer of energy (\ref{Eu}) under fixed momentum (\ref{Fu}) and mass (\ref{Mu}).
	Since $\mathcal{L}$ is the Hessian operator	for $G(u)$ in (\ref{operator}), the spectral stability holds if
	\begin{equation}\label{LLpositive}
	\mathcal{L} \big|_{\{1,\psi\}^{\bot}} \geq 0.
	\end{equation}
	On the other hand, the periodic wave $\psi$ is spectrally unstable with exactly one real positive
eigenvalue if $n \left(\mathcal{L} \big|_{\{1,\psi\}^{\bot}}\right) = 1$, whereas the
case $n \left(\mathcal{L} \big|_{\{1,\psi\}^{\bot}}\right) = 2$ is inconclusive (see \cite{Pel}).

Similarly to the proof of Lemma \ref{lemma-characterization},
	we construct the following symmetric $2$-by-$2$ matrix related to the two constraints in (\ref{LLpositive}):
	$$
	D(\lambda) := \left[\begin{array}{cc}\langle (\mathcal{L} - \lambda I)^{-1} \psi,\psi \rangle &
	\langle (\mathcal{L} - \lambda I)^{-1}\psi, 1 \rangle \\
	\langle (\mathcal{L} - \lambda I)^{-1} 1, \psi \rangle &
	\langle (\mathcal{L} - \lambda I)^{-1}1,1\rangle
	\end{array}\right].
	$$
If $1 \in {\rm Range}(\mathcal{L})$, then
	\begin{eqnarray}
	\label{projections-stab}
		\langle \mathcal{L}^{-1}1,1\rangle = \sigma_0, \quad
		\langle \mathcal{L}^{-1} 1, \psi \rangle = \langle \mathcal{L}^{-1} \psi, 1 \rangle = 0, \quad
		\langle \mathcal{L}^{-1} \psi, \psi \rangle = - \frac{1}{2} \frac{d}{dc} \| \psi \|^2_{L^2},
	\end{eqnarray}
where we have used $\mathcal{L} \partial_c \psi = -\psi$ from Lemma \ref{prop-continuation},
which can be applied since $z(\mathcal{L} |_{L^2_{\rm odd}}) = 0$ follows from the proof of Lemma \ref{lemma-index}.
	By Theorem 4.1 in \cite{Pel-book}, we have the following identities:
	\begin{equation}\label{identnegLL}
	\left\{ \begin{array}{l}
	n(\mathcal{L} \big|_{\{1,\psi\}^{\bot}}) = n(\mathcal{L}) - n_0 - z_0, \\
	z(\mathcal{L} \big|_{\{1,\psi\}^{\bot}}) = z(\mathcal{L}) + z_0,
	\end{array} \right.
	\end{equation}
where $n_0$ and $z_0$ are the numbers of negative and zero eigenvalues of $D(0)$.
Since $n(\mathcal{L}) = 2$ and $z(\mathcal{L}) = 1$ by Lemma \ref{lemma-index},
the count (\ref{identnegLL}) implies $n(\mathcal{L} \big|_{\{1,\psi\}^{\bot}}) = 0$ 
due to (\ref{projections-stab}) 
if the conditions (\ref{stability-constraints}) are satisfied and
$n(\mathcal{L} \big|_{\{1,\psi\}^{\bot}}) = 1$
if the condition (\ref{instability-constraints}) is satisfied.

If $1 \notin {\rm Range}(\mathcal{L})$, then $z(\mathcal{L}) = 2$ but $z(\mathcal{L}|_{X_0}) = 1$
by Lemma \ref{lem-mean-value}. By Theorem 4.1 in \cite{Pel-book}, the count (\ref{identnegLL})
must be replaced by
	\begin{equation}\label{identnegLL-singular}
	\left\{ \begin{array}{l}
	n(\mathcal{L} \big|_{\{1,\psi\}^{\bot}}) = n(\mathcal{L}) - n_0 - z_0, \\
	z(\mathcal{L} \big|_{\{1,\psi\}^{\bot}}) = z(\mathcal{L}) + z_0 - z_{\infty},
	\end{array} \right.
	\end{equation}
where $z_{\infty} = 1$ and $n_0 + z_0 = 1$ if the condition (\ref{instability-constraints-singular})
is satisfied. In this case, $n(\mathcal{L} \big|_{\{1,\psi\}^{\bot}}) = 1$ and the periodic wave is spectrally unstable.
\end{proof}

\begin{remark}
If $1 \in {\rm Range}(\mathcal{L})$, the case $\sigma_0 > 0$ and $\frac{d}{dc} \| \psi \|_{L^2}^2 < 0$ is inconclusive because $n(\mathcal{L} \big|_{\{1,\psi\}^{\bot}}) = 2$. In this case, one needs to find if the spectral stability problem has eigenvalues $\lambda \in i \mathbb{R}$
with so-called negative Krein signature, see \cite{Pel} for further details.
The same is true if $1 \notin {\rm Range}(\mathcal{L})$ and $\frac{d}{dc} \| \psi \|_{L^2}^2 < 0$.
\label{rem-stability}
\end{remark}

\section{Examples of odd periodic waves}

Here we present three examples of the odd periodic waves obtained in Theorem \ref{theorem-existence} and Corollary \ref{corollary-existence}. 
Small-amplitude waves are studied with the explicit analytical computations of the perturbation expansions. Periodic waves in the local case $\alpha = 2$ are handled by explicit analytical computations of elliptic functions. Periodic waves for $\alpha = 1$ are considered by means of numerical approximations. 

\subsection{Stokes expansion of small-amplitude waves}

Stokes expansions of small-amplitude periodic waves near the bifurcation point $c = -1$ are
rather standard in getting precise results on the existence and stability of periodic waves \cite{J,lepeli,NPU}.
The following proposition describes the properties of the small-amplitude periodic waves.

\begin{proposition}
For each $\alpha \in \left(\frac{1}{2},2\right]$,
there exists $c_0\in (-1,\infty)$ such that the odd periodic wave exists for $c \in (-1,c_0)$
with $n(\mathcal{L}) = 2$, $z(\mathcal{L}) = 1$ and is spectrally stable.
\label{prop-Stokes}
\end{proposition}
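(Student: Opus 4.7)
The plan is to perform a Stokes expansion around the bifurcation point $c=-1$, where the operator $D^\alpha+c$ on $L^2_{\rm odd}$ has a simple zero eigenvalue with eigenfunction $\sin(x)$. I set $\psi = \varepsilon \sin(x) + \varepsilon^3 \psi_3 + O(\varepsilon^5)$ and $c = -1 + \varepsilon^2 c_2 + O(\varepsilon^4)$, substitute into the stationary equation with $b=0$ (automatic for odd $\psi$ by Remark \ref{remark-b}), and expand. Using $2\sin^3 x = \tfrac{3}{2}\sin x - \tfrac{1}{2}\sin 3x$, the Fredholm solvability condition at order $\varepsilon^3$ against $\sin(x)$ fixes $c_2 = 3/2$, and the orthogonal part yields $\psi_3 = -\frac{1}{2(3^\alpha-1)}\sin(3x)$. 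Existence of a smooth family $c\mapsto \psi(\cdot;c)\in H^\alpha_{\rm per,odd}$ on a right neighborhood of $-1$ then follows from the implicit function theorem after Lyapunov--Schmidt reduction on the one-dimensional kernel in the odd subspace. By uniqueness of the bifurcating branch it coincides with the variational solution from Theorem \ref{theorem-existence}, and in particular $\psi$ is odd with a single-lobe profile.

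Next I compute $n(\mathcal{L})$ and $z(\mathcal{L})$ on $L^2(\mathbb{T})$ by standard perturbation theory. Write $\mathcal{L} = \mathcal{L}_0 + \varepsilon^2 V + O(\varepsilon^4)$ with $\mathcal{L}_0 = D^\alpha - 1$ and $V = -\tfrac{3}{2} + 3\cos(2x)$. The unperturbed spectrum consists of the simple eigenvalue $-1$ (eigenfunction $1$), a double zero eigenvalue (eigenfunctions $\sin x,\cos x$), and strictly positive eigenvalues for $|n|\geq 2$. The isolated $-1$ eigenvalue persists negatively. The double zero splits according to the symmetric $2\times 2$ matrix of $V$ on ${\rm span}(\sin x,\cos x)$: a short trigonometric computation gives diagonal entries $\langle V\sin x,\sin x\rangle/\pi = -3$ and $\langle V\cos x,\cos x\rangle/\pi = 0$, with zero off-diagonal. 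Therefore one eigenvalue becomes $-3\varepsilon^2+O(\varepsilon^4) < 0$ while the other stays exactly at $0$, which is consistent with translational symmetry since $\partial_x\psi = \varepsilon\cos x + O(\varepsilon^3)\in {\rm Ker}(\mathcal{L})$. This yields $n(\mathcal{L})=2$ and $z(\mathcal{L})=1$, with $1\in {\rm Range}(\mathcal{L})$ since the zero eigenvalue is simple.

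To conclude spectral stability, I check the two hypotheses in (\ref{stability-constraints}) of Theorem \ref{theorem-stability}. From $\|\psi\|_{L^2}^2 = \pi\varepsilon^2 + O(\varepsilon^4)$ and $\varepsilon^2 = \tfrac{2}{3}(c+1) + O((c+1)^2)$, I obtain $\frac{d}{dc}\|\psi\|_{L^2}^2 = \tfrac{2\pi}{3} + O(c+1) > 0$. For $\sigma_0 = \langle\mathcal{L}^{-1}1,1\rangle$, the identity $\mathcal{L}_0^{-1}1 = -1$ (because $D^\alpha$ annihilates constants) gives $\sigma_0 = -2\pi + O(\varepsilon^2) < 0$ by continuity of the resolvent. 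Both inequalities hold strictly, so Theorem \ref{theorem-stability} yields spectral stability for $c\in(-1,c_0)$ with some $c_0>-1$.

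No serious obstacle is anticipated: both the Stokes computation (in the spirit of \cite{NPU,J,lepeli}) and the perturbation of a two-dimensional degenerate kernel are routine. The only point requiring care is uniformity in $\alpha\in(\tfrac12,2]$, which is automatic because $3^\alpha-1$ is bounded away from zero and all Fredholm and perturbation constants depend continuously on $\alpha$.
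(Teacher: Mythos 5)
Your proposal is correct and follows the same overall strategy as the paper's proof: a Stokes expansion about $c=-1$ with $\psi_1=\sin x$, $c_2=\tfrac{3}{2}$, $\psi_3=\tfrac{1}{2(1-3^{\alpha})}\sin 3x$ (your expression agrees with this), followed by the computations $\sigma_0=-2\pi+\mathcal{O}(A^2)<0$ and $\frac{d}{dc}\|\psi\|^2_{L^2}=\tfrac{2\pi}{3}+\mathcal{O}(A^2)>0$ and an appeal to Theorem \ref{theorem-stability}. The one place where you genuinely diverge is the index count: the paper obtains $n(\mathcal{L})=2$, $z(\mathcal{L})=1$ by observing that $1\in{\rm Range}(\mathcal{L})$ for small $A$ and invoking the general nodal-analysis result of Lemma \ref{lemma-index} for the variational minimizer, whereas you compute the splitting of the double zero eigenvalue of $D^{\alpha}-1$ directly via degenerate perturbation theory with $V=-\tfrac{3}{2}+3\cos 2x$, finding the corrections $-3$ and $0$ on ${\rm span}(\sin x,\cos x)$. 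Your route is more self-contained and does not need the nodal machinery in this regime (though it does need the remark that the eigenvalue attached to $\cos x$ is pinned exactly at zero by translation invariance, which you make); the paper's route is shorter here but leans on Lemma \ref{lemma-index}, which in turn presupposes that the Stokes branch is the variational one. Both arguments rest equally on the identification of the small-amplitude branch with the minimizer of Theorem \ref{theorem-existence} — you assert it via local uniqueness of the bifurcating branch, the paper defers it to a remark citing an extension of Lemma 2.3 of \cite{NPU} — so neither is more complete on that point.
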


\begin{proof}
We solve the stationary equation (\ref{ode-stat}) with $b = 0$ in the space of odd functions by using Stokes expansions in terms of small amplitude $A$:
\begin{equation}
\label{Stokes}
\psi(x) = A \psi_1(x) + A^3 \psi_3(x) + \mathcal{O}(A^5)
\end{equation}
and
\begin{equation}
\label{Stokes-speed}
 c = -1 + A^2 c_2 + \mathcal{O}(A^4).
\end{equation}
We obtain recursively: $\psi_1(x) = \sin(x)$, 
$$
\psi_3(x) = \frac{1}{2(1 - 3^{\alpha})} \sin(3x),
$$
and $c_2 = \frac{3}{2}$ uniformly in $\alpha$.

Since $\mathcal{L} = D^{\alpha} - 1 + \mathcal{O}(A^2)$,
then $1 \in {\rm Range}(\mathcal{L})$ for small $A$, so that $n(\mathcal{L}) = 2$ and $z(\mathcal{L}) = 1$
for any $\alpha \in \left(\frac{1}{2},2\right]$ by Lemma \ref{lemma-index}.

Furthermore, $\mathcal{L}^{-1} 1 = -1 + \mathcal{O}(A^2)$, so that $\sigma_0 = 
\langle \mathcal{L}^{-1} 1, 1 \rangle = -2 \pi + \mathcal{O}(A^2) < 0$,
which implies $n(\mathcal{L}|_{\{1, \psi^3 \}^{\perp}}) = 0$ and $z(\mathcal{L}|_{\{1, \psi^3 \}^{\perp}}) = 1$
by Lemma \ref{lemma-characterization}. Hence, the 
odd periodic wave for small amplitude $A$
represents a local minimizer of the variational problem
(\ref{infB}) with two constraints for any $\alpha \in \left(\frac{1}{2},2\right]$.

Finally, we obtain $\| \psi \|^2_{L^2} = \pi A^2 + \mathcal{O}(A^4)$ so that
$$
\frac{d}{dc} \| \psi \|^2_{L^2} = \frac{2\pi}{3} \left[ 1 + \mathcal{O}(A^2) \right] > 0.
$$
By Theorem \ref{theorem-stability}, the Stokes wave (\ref{Stokes}) for small $A$ is spectrally stable since the criterion (\ref{stability-constraints}) is satisfied.
\end{proof}

\begin{remark}
	For $c$ near $-1$, one can show by extending Lemma 2.3 in \cite{NPU} that the small-amplitude wave of Proposition \ref{prop-Stokes} coincides with the solution obtained in Theorem \ref{theorem-existence} and Corollary \ref{corollary-existence}.
\end{remark}

\subsection{Local case $\alpha = 2$}

In the case of the modified KdV equation ($\alpha = 2$), the stationary equation (\ref{ode-stat}) with $b = 0$ can be solved
in the space of odd functions by using the Jacobian cnoidal function.
Let us recall the normalized solution $\psi_0(z) = k {\rm cn}(z;k)$ of the
second-order equation
\begin{equation}
\label{normalized-cn}
\psi_0''(z) + (1-2k^2) \psi_0(z) + 2 \psi_0(z)^3 = 0.
\end{equation}
By adopting a scaling transformation and a translation of the even function ${\rm cn}(z;k)$ by a quarter-period,
we obtain the exact solution for the odd periodic wave in the form:
\begin{equation}
\label{cnoidal}
\psi(x) = \frac{2}{\pi} k K(k) {\rm cn}\left[\frac{2}{\pi} K(k) x - K(k);k\right]
= \frac{2}{\pi}  k \sqrt{1-k^2} K(k) \frac{{\rm sn}\left[\frac{2}{\pi}  K(k) x;k\right]}{{\rm dn}\left[\frac{2}{\pi}  K(k) x;k\right]}
\end{equation}
with
\begin{equation}
\label{cnoidal-speed}
c = \frac{4}{\pi^2} K(k)^2 (2k^2-1),
\end{equation}
where $K(k)$ is the complete elliptic integral of the first kind.
We recall some properties of complete elliptic integrals 
$K(k)$ and $E(k)$ of the first and second kinds, respectively:
\begin{eqnarray*}
&& \mbox{\rm (a)} \quad E(0) = K(0) = \frac{\pi}{2}, \\
&& \mbox{\rm (b)} \quad E(k) \to  1, \quad K(k) \to \infty, \quad \mbox{\rm as} \quad k \to 1,
\end{eqnarray*}
and
$$
\mbox{\rm (c)} \quad \frac{d}{dk} E(k) = \frac{E(k) - K(k)}{k} < 0, \quad \frac{d}{dk} K(k) = \frac{E(k)}{k(1-k^2)} - \frac{K(k)}{k} > 0.
$$
It follows from (a) and (c) that
\begin{equation}
\label{property-elliptic}
(1-k^2) K(k) < E(k) < K(k), \quad k \in (0,1).
\end{equation}

The following proposition summarizes properties of the odd periodic waves for $\alpha = 2$.

\begin{proposition}
Fix $\alpha = 2$. The odd periodic wave (\ref{cnoidal}) 
exists for every $c \in (-1,\infty)$ with $n(\mathcal{L}) = 2$ and $z(\mathcal{L}) = 1$.
There exists $c_* \in (-1,\infty)$ such that the odd periodic wave is spectrally
stable for $c \in (-1,c_*]$ and is spectrally unstable with one real positive eigenvalue
for $c \in (c_*,\infty)$.
\label{prop-cnoidal}
\end{proposition}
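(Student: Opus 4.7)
The plan is to parametrize the family by the elliptic modulus $k \in (0,1)$ via formulas (\ref{cnoidal})--(\ref{cnoidal-speed}), establish the spectral counts of $\mathcal{L}$ by classical Lam\'e theory, and then compute the two stability quantities $\|\psi\|_{L^2}^2$ and $\sigma_0 = \langle \mathcal{L}^{-1} 1, 1 \rangle$ in closed form, finally applying Theorem \ref{theorem-stability}. First, I would show that $k \mapsto c(k) = \frac{4}{\pi^2}K(k)^2(2k^2 - 1)$ is a smooth strictly increasing bijection from $(0,1)$ onto $(-1,\infty)$: the endpoint values follow from (a) and (b), while differentiating and using (c) together with the bound (\ref{property-elliptic}) yields $\frac{dc}{dk} > 0$. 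For the spectral counts, the operator $\mathcal{L} = -\partial_x^2 + c - 6\psi^2$ with $\psi^2$ proportional to ${\rm cn}^2$ is a classical Lam\'e operator whose low-lying eigenvalues on $L^2(\mathbb{T})$ are explicitly available: two are strictly negative and one is zero (simple, with eigenfunction $\partial_x \psi$). This gives $n(\mathcal{L}) = 2$ and $z(\mathcal{L}) = 1$, so $1 \in {\rm Range}(\mathcal{L})$ and the framework of Section~2 applies.

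Next I would derive $\|\psi\|_{L^2}^2$ and $\sigma_0$ in closed form. After the substitution $z = \frac{2K}{\pi}x - K$ and using the identity $\int_0^{4K}{\rm cn}^2(z;k)\,dz = \frac{4}{k^2}(E - (1-k^2)K)$, one obtains $\|\psi\|_{L^2}^2 = \frac{8 K(E - (1-k^2)K)}{\pi}$. A direct calculation using (c) reduces $k(1-k^2)\frac{d}{dk}[K(E-(1-k^2)K)]$ to $(E - (1-k^2)K)^2 + k^2(1-k^2)K^2 > 0$, hence $\|\psi\|_{L^2}^2$ is strictly increasing in $k$ and, via $\frac{dc}{dk} > 0$, also in $c$. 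For $\sigma_0$ I would seek $v$ with $\mathcal{L} v = 1$ via the ansatz $v = \alpha + \beta\, \psi^2$; combining $\mathcal{L}(1) = c - 6\psi^2$ with $\mathcal{L}(\psi^2) = -3c\psi^2 + 2A^2(c - A^2)$ (derived from the ODE and its first integral, where $A = \frac{2kK}{\pi}$ is the amplitude) fixes $\alpha$ and $\beta$, using that $(c - 2A^2)^2 = 16K^4/\pi^4$. Integrating $v$ against $1$ yields the explicit formula
$$\sigma_0 = \frac{\pi^3\bigl[K(k) - 2E(k)\bigr]}{2K(k)^3}.$$
The quantity $K - 2E$ equals $-\frac{\pi}{2}$ at $k=0$ and tends to $+\infty$ as $k \to 1$; its derivative reduces to $G(k) := (2k^2 - 1)E + (1-k^2)K$ with $G(0) = 0$ and $G'(k) = 3k(2E - K)$, and a sign analysis shows $G > 0$ on $(0,1)$, so $K - 2E$ is strictly increasing with a unique zero $k^{**} \in (0,1)$. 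Hence $\sigma_0 < 0$ for $k < k^{**}$ and $\sigma_0 > 0$ for $k > k^{**}$.

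Setting $c_* := c(k^{**})$, Theorem \ref{theorem-stability} then concludes the argument. For $c \in (-1, c_*]$ we have $\sigma_0 \leq 0$ and $\frac{d}{dc}\|\psi\|_{L^2}^2 > 0$, so (\ref{stability-constraints}) gives spectral stability; for $c \in (c_*, \infty)$ we have $\sigma_0 > 0$ and $\frac{d}{dc}\|\psi\|_{L^2}^2 > 0$, which places us in the first alternative of (\ref{instability-constraints}) and yields spectral instability with exactly one real positive eigenvalue. The main obstacle is the closed-form computation of $\sigma_0$: the ansatz $v = \alpha + \beta \psi^2$ must be recognized from the observation that both $\mathcal{L}(1)$ and $\mathcal{L}(\psi^2)$ are degree-two polynomials in $\psi$, and the identity $\mathcal{L}(\psi^2) = -3c\psi^2 + 2A^2(c - A^2)$ rests on simultaneously using the ODE (\ref{ode-stat}) and its first integral. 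Once $\sigma_0$ and $\|\psi\|_{L^2}^2$ are in hand, the sign analysis via $G(k)$ is elementary but requires attention.
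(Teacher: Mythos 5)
Your proposal is correct and follows the paper's overall strategy (monotonicity of $k \mapsto c(k)$, explicit Lam\'e spectrum for the counts $n(\mathcal{L})=2$, $z(\mathcal{L})=1$, closed forms for $\|\psi\|_{L^2}^2$ and $\sigma_0$, then Theorem \ref{theorem-stability}), but your computation of $\sigma_0$ is genuinely different from the paper's. The paper expands the constant function $1$ in the two explicit even Lam\'e eigenfunctions $\varphi_0,\varphi_4$ and inverts $\mathcal{L}_0$ eigenvalue-by-eigenvalue, obtaining $\langle \mathcal{L}_0^{-1}1,1\rangle = -4[2E(k)-K(k)]$; you instead solve $\mathcal{L}v=1$ directly with the ansatz $v=\alpha+\beta\psi^2$, using $\mathcal{L}\psi^2=-3c\psi^2+2A^2(c-A^2)$ from the ODE and its first integral. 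I verified your identity and the resulting formula $\sigma_0=\frac{\pi^3(K-2E)}{2K^3}$, which agrees exactly with the paper's value after accounting for the scaling $z=\frac{2K}{\pi}x-K$ (so the two answers are the same positive multiple of $K-2E$). Your route has the advantage of not needing the explicit eigenfunctions for this step (it also exhibits $1\in{\rm Range}(\mathcal{L})$ constructively) and of generalizing in principle to situations where the full Lam\'e spectrum is unavailable, though you still invoke the explicit spectrum for the index counts. Two small points worth tightening: (i) the positivity of $G(k)=(2k^2-1)E+(1-k^2)K$ is most cleanly seen by splitting cases — for $k^2\geq \tfrac12$ both terms are nonnegative, and for $k^2<\tfrac12$ one has $G\geq (1-k^2)E-(1-2k^2)E=k^2E>0$ using $K>E$ — rather than through $G'(k)=3k(2E-K)$, whose sign analysis is slightly circular with the monotonicity of $K-2E$; (ii) the value $\langle v,1\rangle$ is independent of the choice of preimage $v$ because ${\rm Ker}(\mathcal{L})={\rm span}(\partial_x\psi)$ has zero mean, which is worth a sentence.
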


\begin{proof}
The mapping $(0,1) \ni k \mapsto c(k) \in (-1,\infty)$ is one-to-one and onto. This follows from
$$
\frac{\pi^2}{8} \frac{dc}{dk} = \frac{K(k)}{k(1-k^2)} \left[ (1-k^2) [K(k) - E(k)] + k^2 E(k) \right] > 0,
$$
where property (\ref{property-elliptic}) has been used. Hence,
the odd periodic wave parameterized by $k \in (0,1)$ in (\ref{cnoidal}) and (\ref{cnoidal-speed})
exists for every $c \in (-1,\infty)$.

The first five eigenvalues and eigenfunctions of the normalized linearized operator
\begin{equation}
\label{operator-cn}
\mathcal{L}_0 = -\partial_z^2 + 2k^2 - 1 - 6 k^2 {\rm cn}(z;k)^2
\end{equation}
are known in space $L^2(-2K(k),2K(k))$ in the explicit form \cite{angulo1,DK}. The two negative eigenvalues and a simple zero
eigenvalue with the corresponding eigenfunctions are given by
\begin{eqnarray*}
& \lambda_0 = 1 - 2 k^2 - 2 \sqrt{1-k^2+k^4}, \quad & \varphi_0(z) = 1 + k^2 + \sqrt{1-k^2+k^4} - 3 k^2 {\rm sn}(z;k)^2,\\
& \lambda_1 = -3k^2, \quad & \varphi_1(z) = {\rm cn}(z;k) {\rm dn}(z;k), \\
& \lambda_2 = 0, \quad & \varphi_2(z) = {\rm sn}(z;k) {\rm dn}(z;k).
\end{eqnarray*}
The next two positive eigenvalues with the corresponding eigenfunctions are given by
\begin{eqnarray*}
& \lambda_3 = 3 (1 - k^2), \quad & \varphi_3(z) = {\rm sn}(z;k) {\rm cn}(z;k), \\
& \lambda_4 = 1 - 2 k^2 + 2 \sqrt{1-k^2+k^4}, \quad & \varphi_4(z) = 1 + k^2 - \sqrt{1-k^2+k^4} - 3 k^2 {\rm sn}(z;k)^2.
\end{eqnarray*}
Eigenvalues and eigenvectors of the linearized operator $\mathcal{L}$ are obtained after
the same scaling and translational transformation as in (\ref{cnoidal}). In agreement with Lemma \ref{lemma-index},
we have $n(\mathcal{L}) = 2$, $z(\mathcal{L}) = 1$, and $1 \in {\rm Range}(\mathcal{L})$
for every $c \in (-1,\infty)$. Moreover, we compute
$$
\frac{1}{2\sqrt{1-k^2+k^4}} \mathcal{L}_0 \left[ \frac{\lambda_4 \varphi_0 - \lambda_0 \varphi_4}{\lambda_0 \lambda_4} \right] = 1
\quad \mbox{\rm and} \quad \frac{1}{2\sqrt{1-k^2+k^4}} \left[ \varphi_0 - \varphi_4 \right] = 1,
$$
from which it follows that 
\begin{eqnarray*}
\langle \mathcal{L}_0^{-1} 1, 1 \rangle = \frac{\lambda_4 \langle \varphi_0, 1 \rangle
- \lambda_0 \langle \varphi_4, 1 \rangle}{2 \sqrt{1 - k^2 + k^4} \lambda_0 \lambda_4} = - 4 \left[ 2 E(k) - K(k) \right].
\end{eqnarray*}
Since
$$
\frac{d}{dk} k E(k) = 2 E(k) - K(k) \quad \mbox{\rm and} \quad \frac{d^2}{dk^2} k E(k) = \frac{(1-k^2) [E(k) - K(k)] - k^2 E(k)}{k(1-k^2)} < 0,
$$
in addition to (b), there exists exactly one value of $k$, labeled as $k^* \approx 0.909$ in \cite{DK}, such
that $\langle \mathcal{L}_0^{-1} 1, 1 \rangle < 0$ for $k \in (0,k^*)$ and $\langle \mathcal{L}_0^{-1} 1, 1 \rangle > 0$
for $k \in (k^*,1)$. Up to a positive scaling factor, $\langle \mathcal{L}_0^{-1} 1, 1 \rangle$ gives the value of $\sigma_0 = \langle \mathcal{L}^{-1} 1, 1 \rangle$.
By Lemma \ref{lemma-characterization}, this implies that $n(\mathcal{L}|_{\{1, \psi^3 \}^{\perp}}) = 0$
for $k \in (0,k^*]$ and $n(\mathcal{L}|_{\{1, \psi^3 \}^{\perp}}) = 1$ for $k \in (k^*,1)$.
Therefore, there exists a bifurcation at $k = k^*$ such that the odd periodic wave (\ref{cnoidal})
is a local minimizer of the variational problem (\ref{infB}) with two constraints for $k \in (0,k^*)$
and a saddle point for $k \in (k^*,1)$. The value of $k^*$ defines uniquely a value $c^* \approx 1.425$
by (\ref{cnoidal-speed}).

Finally, we obtain
$$
\| \psi \|^2_{L^2} = \frac{8}{\pi} K(k) \left[ E(k) - (1-k^2) K(k) \right] > 0
$$
and
$$
\frac{\pi}{9} \frac{d}{dk} \| \psi \|^2_{L^2} = \frac{1}{k(1-k^2)} \left[ (1-k^2) K(k) [K(k) - E(k)]
+ E(k) [E(k) - (1-k^2) K(k)] \right] > 0,
$$
for every $k \in (0,1)$, where the property (\ref{property-elliptic}) has been used. By Theorem \ref{theorem-stability} due to the stability and instability criteria (\ref{stability-constraints}) and (\ref{instability-constraints}),
the odd periodic wave (\ref{cnoidal}) with the speed (\ref{cnoidal-speed}) is spectrally stable
for $c \in (-1,c^*]$ and is spectrally unstable with exactly one real positive eigenvalue if $c \in (c^*,\infty)$.
\end{proof}

\begin{remark}
The statement of Proposition \ref{prop-cnoidal} agrees with the outcomes obtained in \cite{DK} and \cite{AN1}.
\end{remark}

\subsection{Numerical approximations}

Here we numerically compute solutions of the stationary equation
(\ref{ode-stat}) using Newton's method in the Fourier space, 
similar to our previous work \cite{NPU}.
For better performance, the odd periodic wave with profile $\psi$
in Theorem \ref{theorem-existence} is translated by a quarter period $\pi/2$
to an even function of $x$. The starting iteration is generated from the Stokes
expansion (\ref{Stokes}) after the translation and this solution
is uniquely continued in $c$ for all $c \in (-1,\infty)$.
This family of solutions correspond to $b = 0$ in the stationary equation (\ref{ode-stat}).


Additionally, we add a perturbation to the profile $\psi$ to preserve the even
symmetry but to break the odd symmetry after the translation. Numerical iterations
converge back to the same family of solutions with $b = 0$ for $c < c_*$, where
$c_* \in (-1,\infty)$ is the bifurcation point for which a nontrivial solution
in Lemma \ref{lem-mean-value} exists. The value of $c_*$ exists for all $\alpha \in \left( \frac{1}{2},2\right]$.
When $c > c_*$, numerical iterations converge
to a new family of solutions to the stationary equation (\ref{ode-stat}) with $b \neq 0$,
which is then continued with respect to $c$. Convergence of numerical iterations is measured by the $L^2$ norm of the residue for the stationary equation (\ref{ode-stat}), with the tolerance equals to $10^{-10}$.


\begin{figure}[htp]
	\centering
	\begin{subfigure}{0.5\textwidth}
		\centering
		\includegraphics[width=1.0\linewidth]{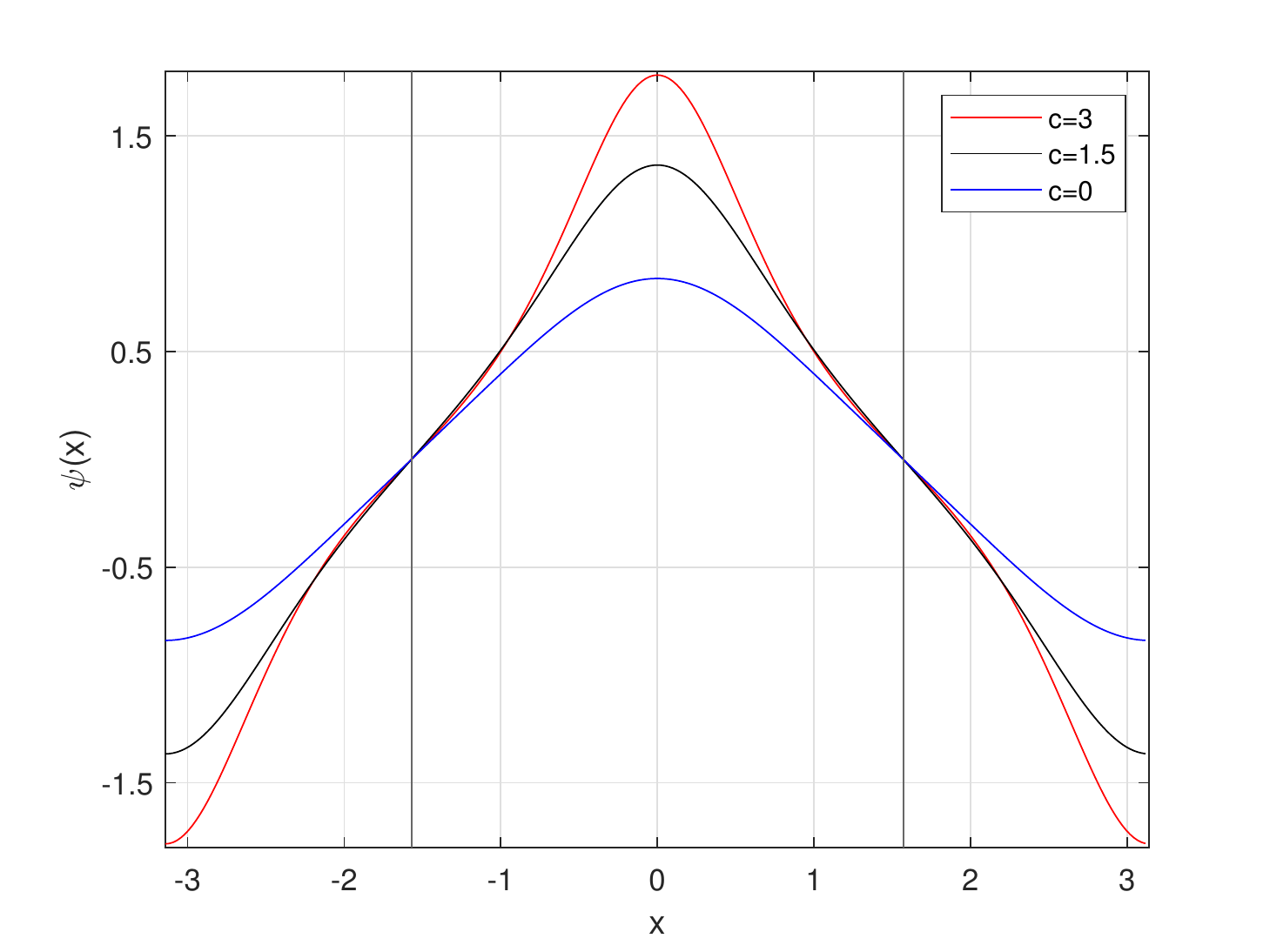}
	\end{subfigure}%
	\begin{subfigure}{0.5\textwidth}
		\centering
		\includegraphics[width=1.0\linewidth]{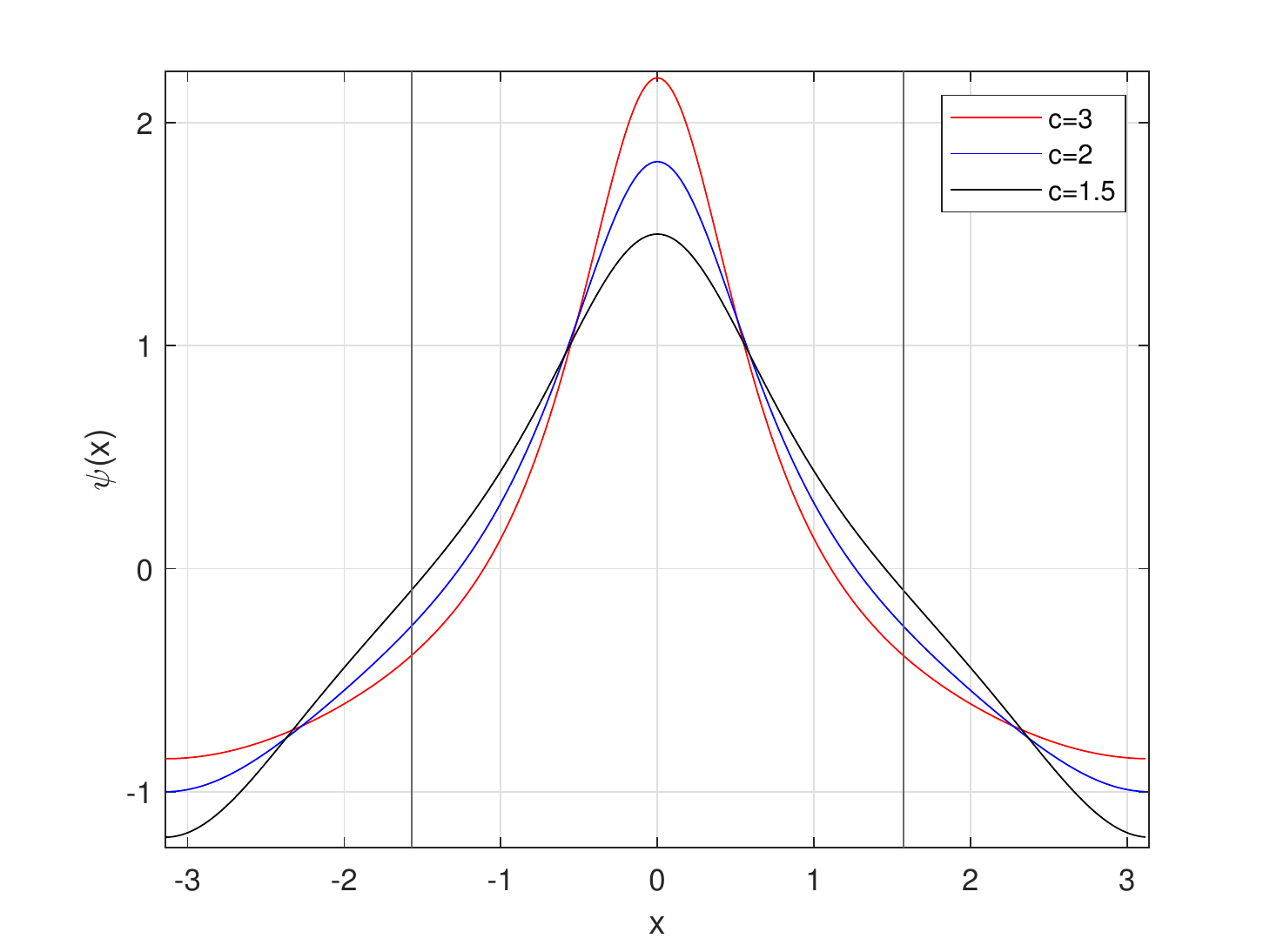}
	\end{subfigure}
	\begin{subfigure}{0.5\textwidth}
		\centering
		\includegraphics[width=1.0\linewidth]{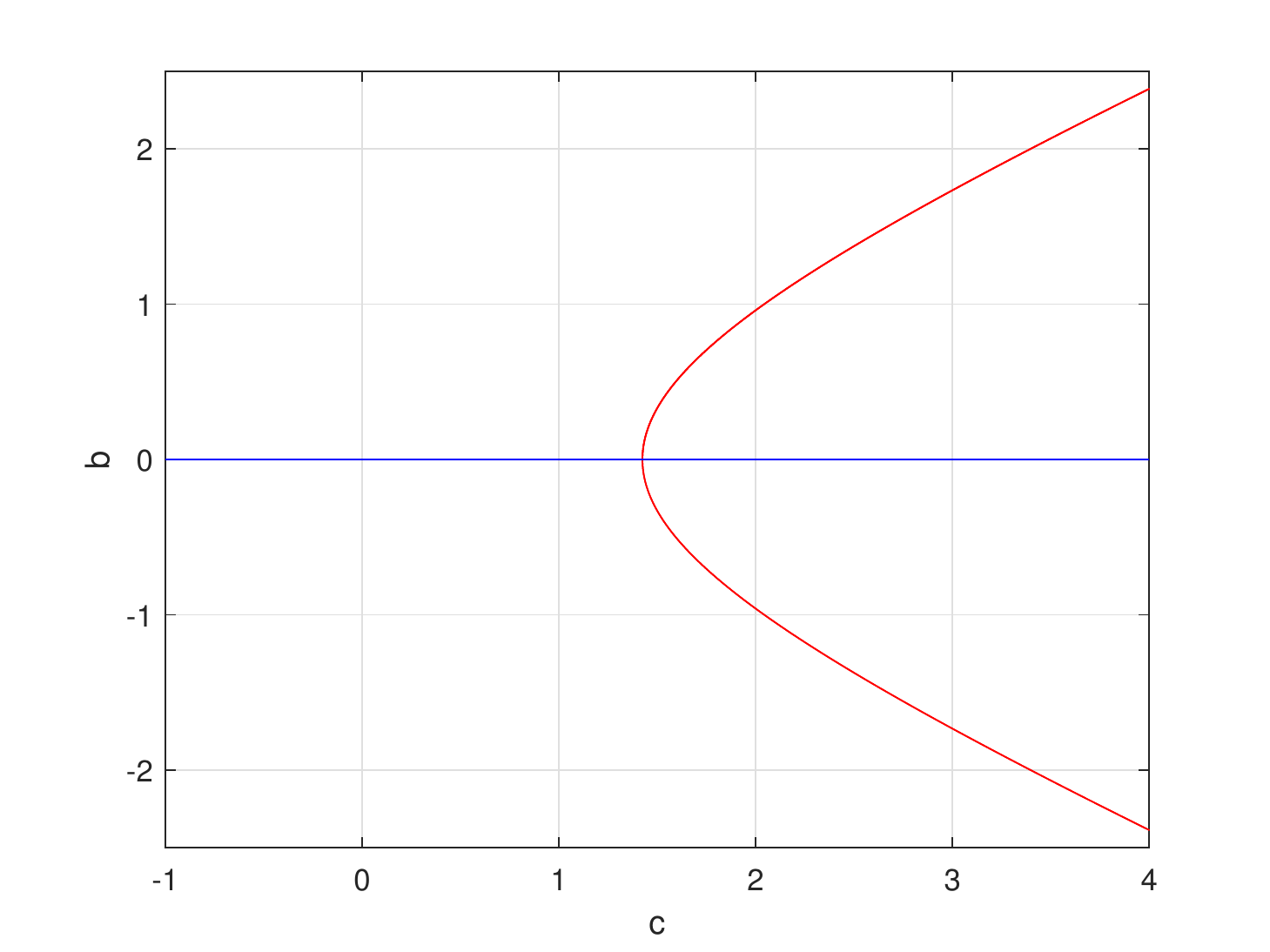}
	\end{subfigure}%
	\begin{subfigure}{0.5\textwidth}
		\centering
		\includegraphics[width=1.0\linewidth]{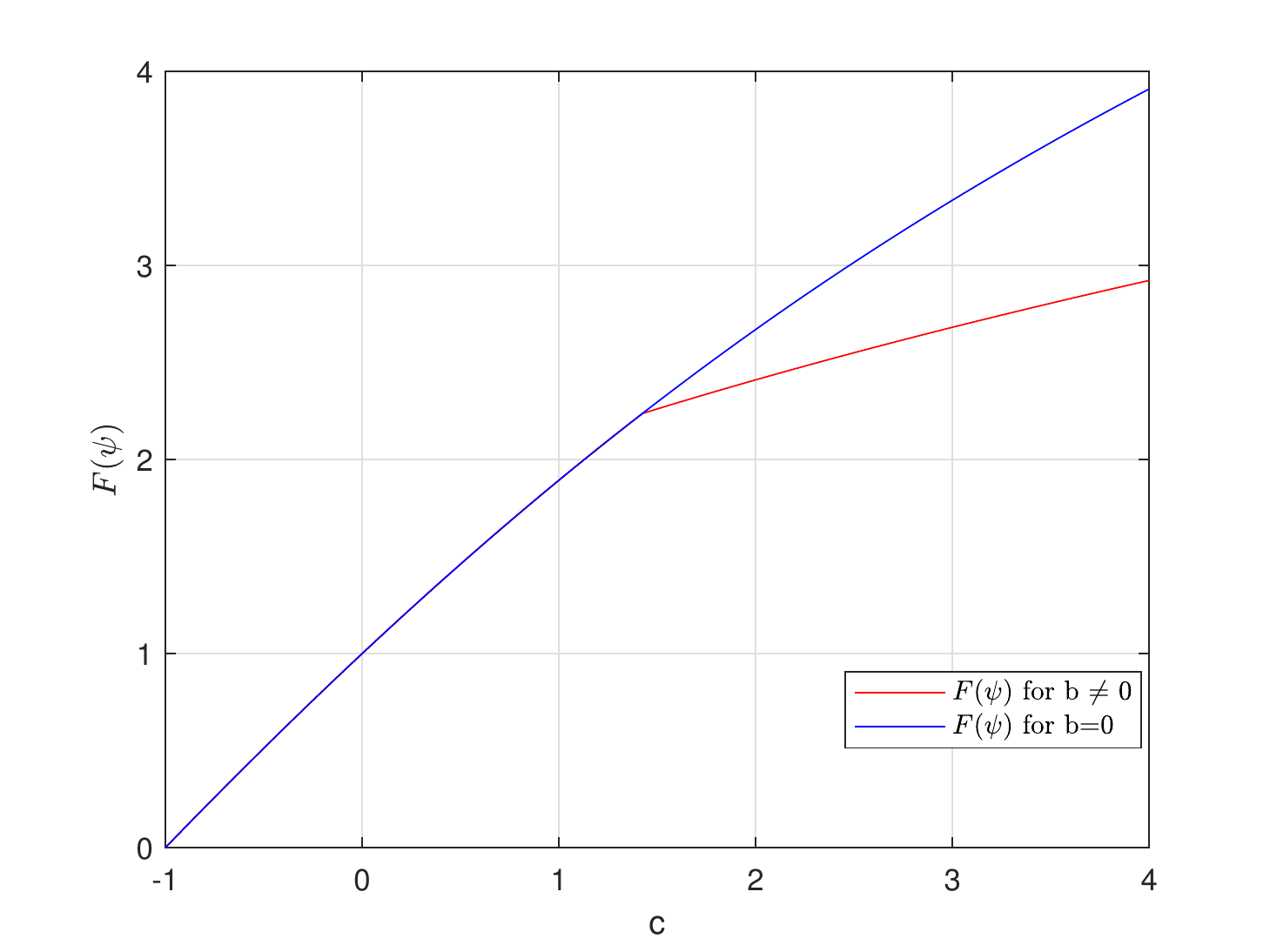}
	\end{subfigure}
	\begin{subfigure}{0.5\textwidth}
	\centering
	\includegraphics[width=1.0\linewidth]{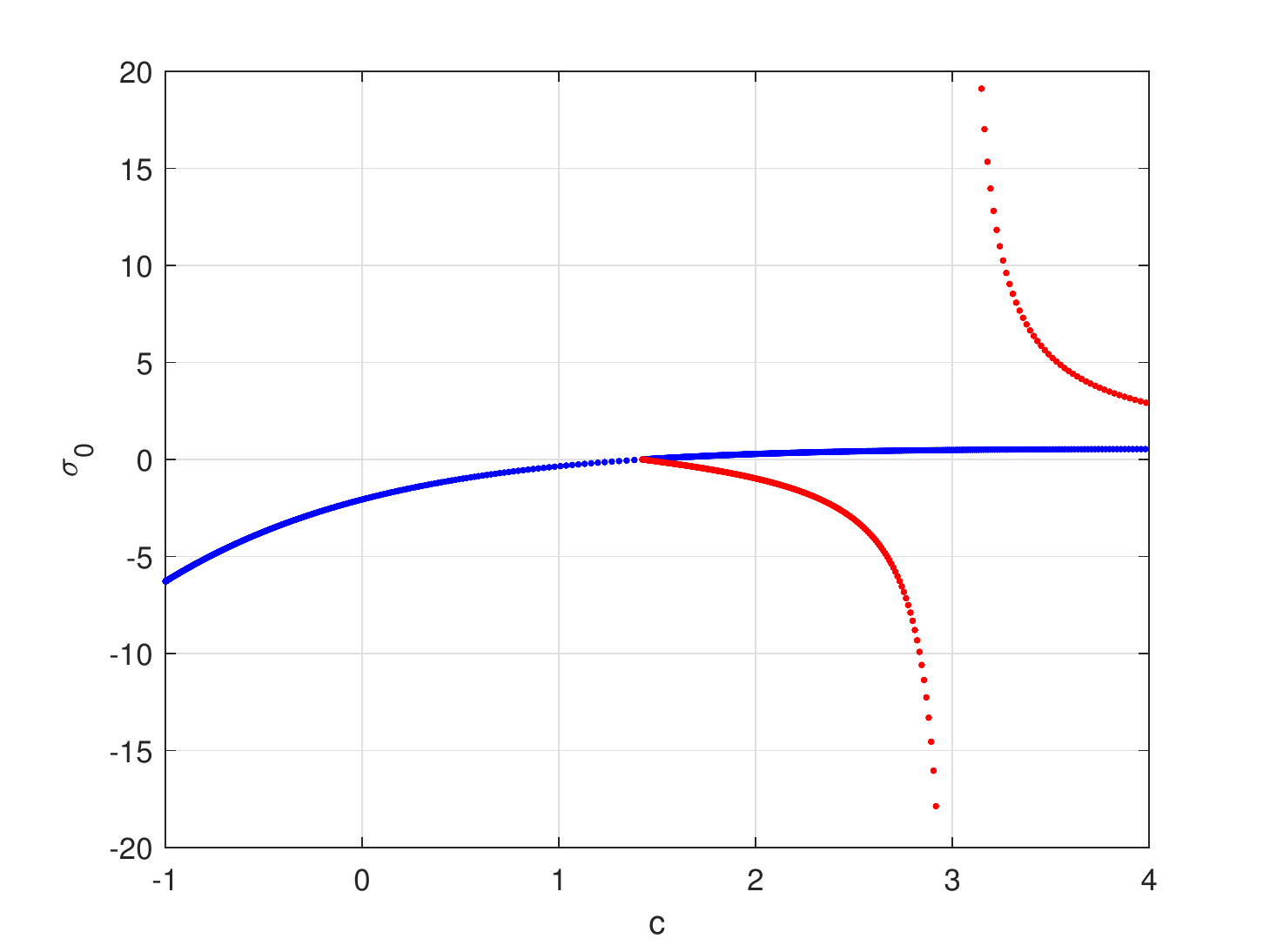}
		\end{subfigure}
\begin{subfigure}{0.49\textwidth}
		\centering
		\includegraphics[width=1.0\linewidth]{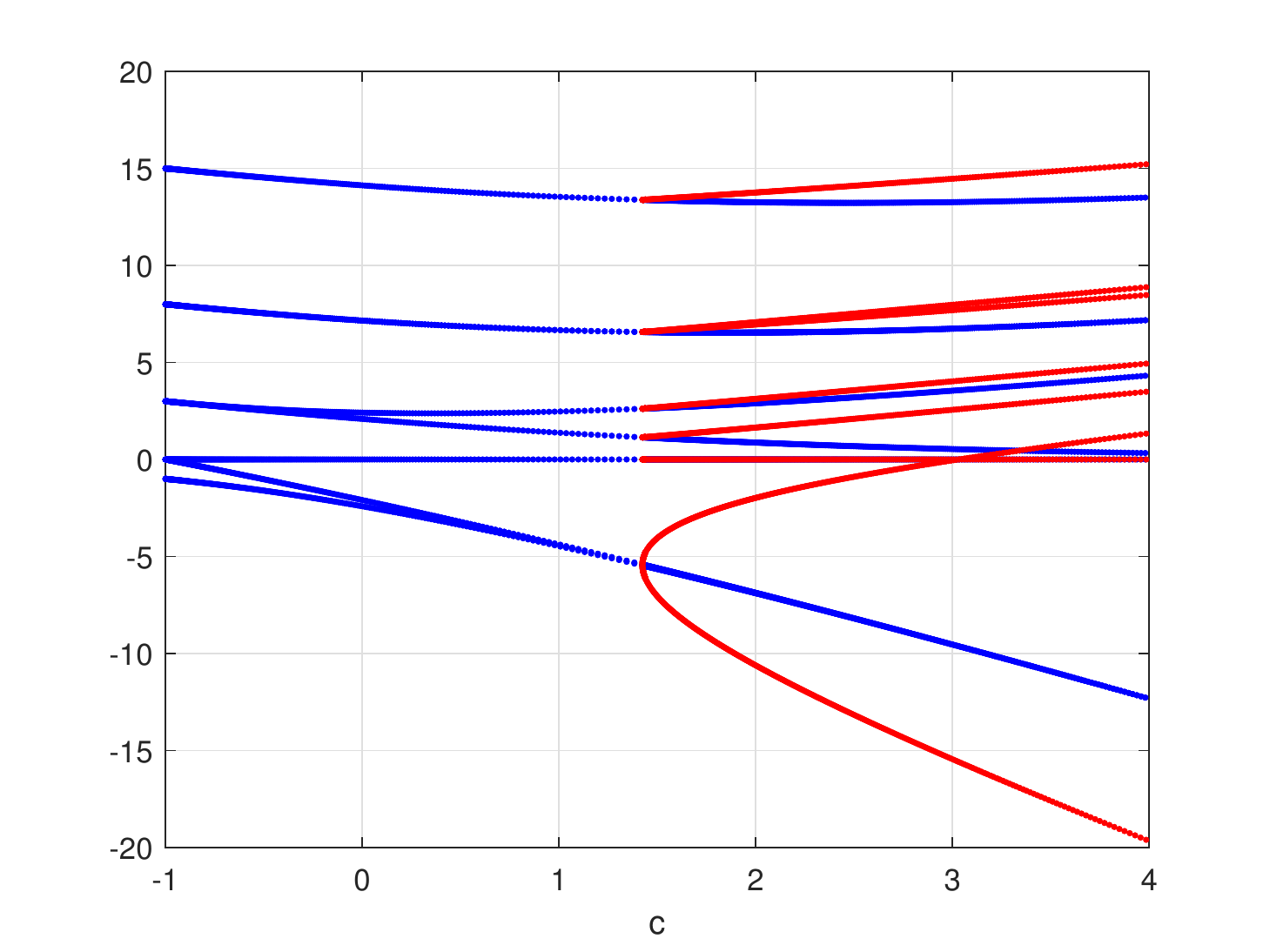}
	\end{subfigure}
	\caption{Periodic waves for $\alpha = 2$. Top left: Profiles of $\psi$ with $b = 0$
for three different values of $c$. Top right: Profiles of $\psi$ with $b \neq 0$ for three values of $c$.
Middle left: Dependence of $b$ versus $c$ showing the pitchfork bifurcation point $c_*$.
Middle right: Dependence of the momentum $F(\psi)$ versus $c$. Bottom left: Dependence of $\sigma_0$ versus $c$. Bottom right: The lowest eigenvalues of $\mathcal{L}$ versus $c$. The blue (red) line corresponds to the family with $b = 0$ ($b\neq 0$). }
	\label{fig:bifurcationalpha2}
\end{figure}

\begin{figure}[htp]
	\centering
	\begin{subfigure}{0.5\textwidth}
		\centering
		\includegraphics[width=1.0\linewidth]{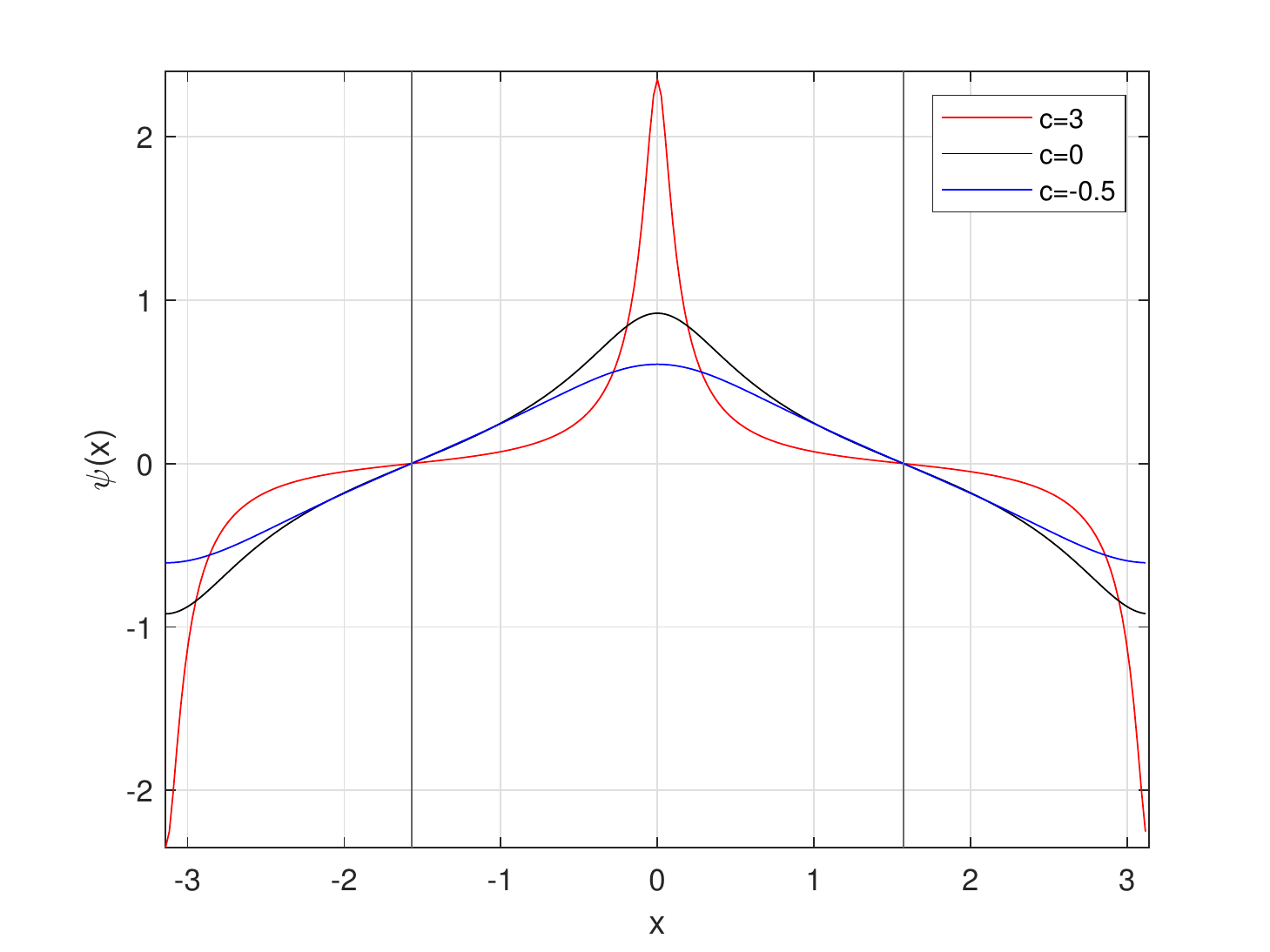}
	\end{subfigure}%
	\begin{subfigure}{0.5\textwidth}
		\centering
		\includegraphics[width=1.0\linewidth]{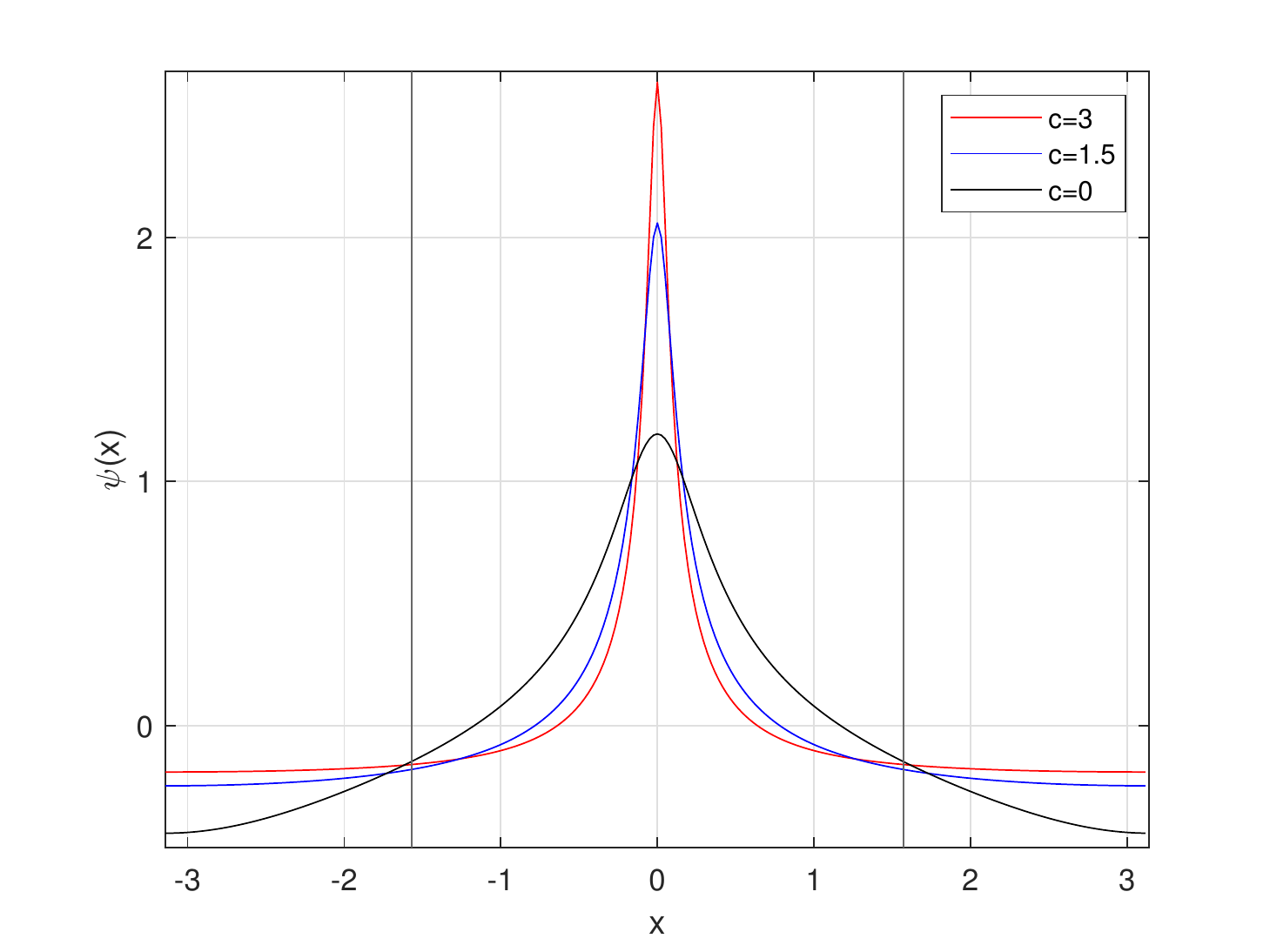}
	\end{subfigure}
	\begin{subfigure}{0.5\textwidth}
		\centering
		\includegraphics[width=1.0\linewidth]{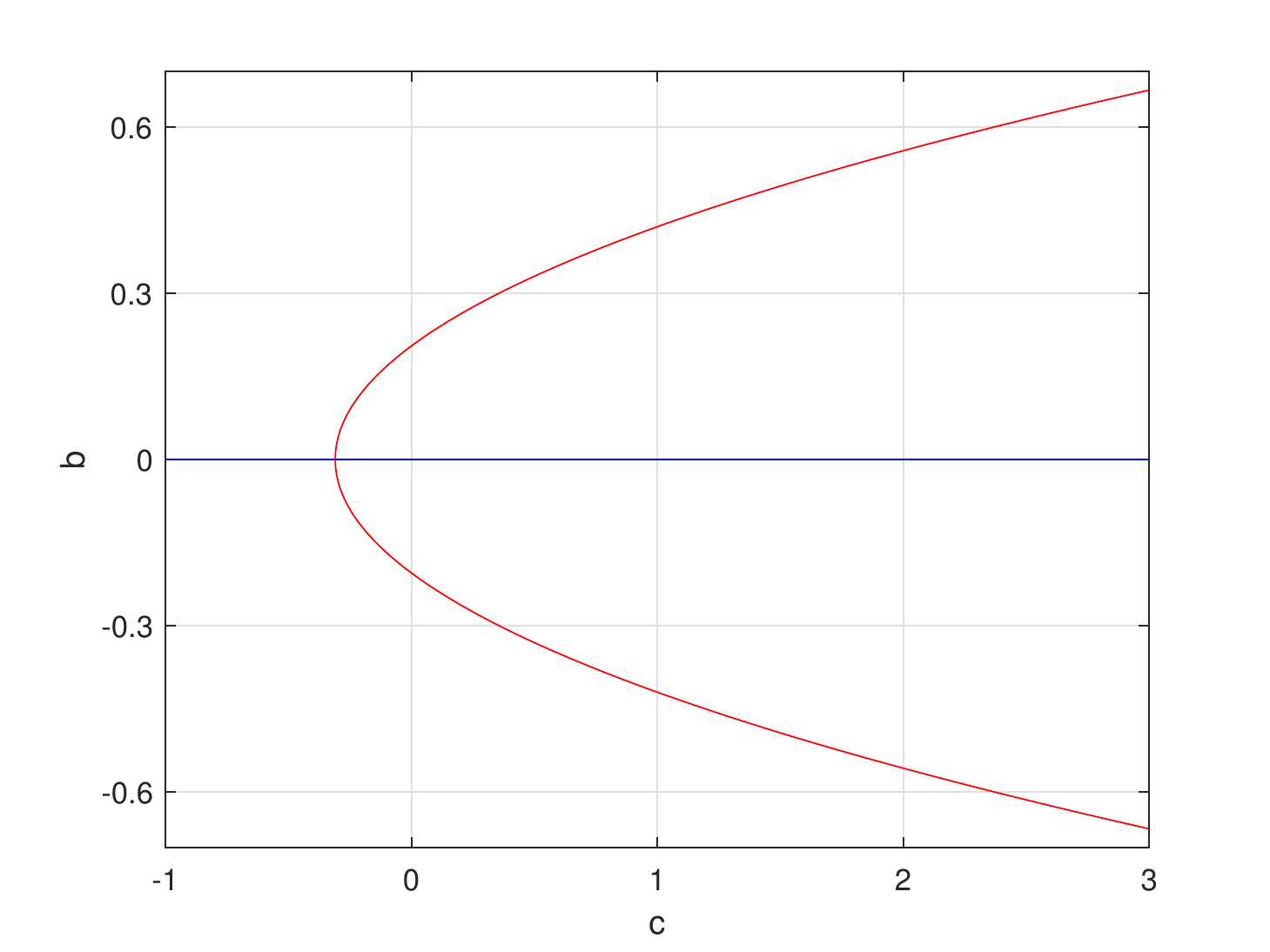}	
	\end{subfigure}%
	\begin{subfigure}{0.5\textwidth}
		\centering
		\includegraphics[width=1.0\linewidth]{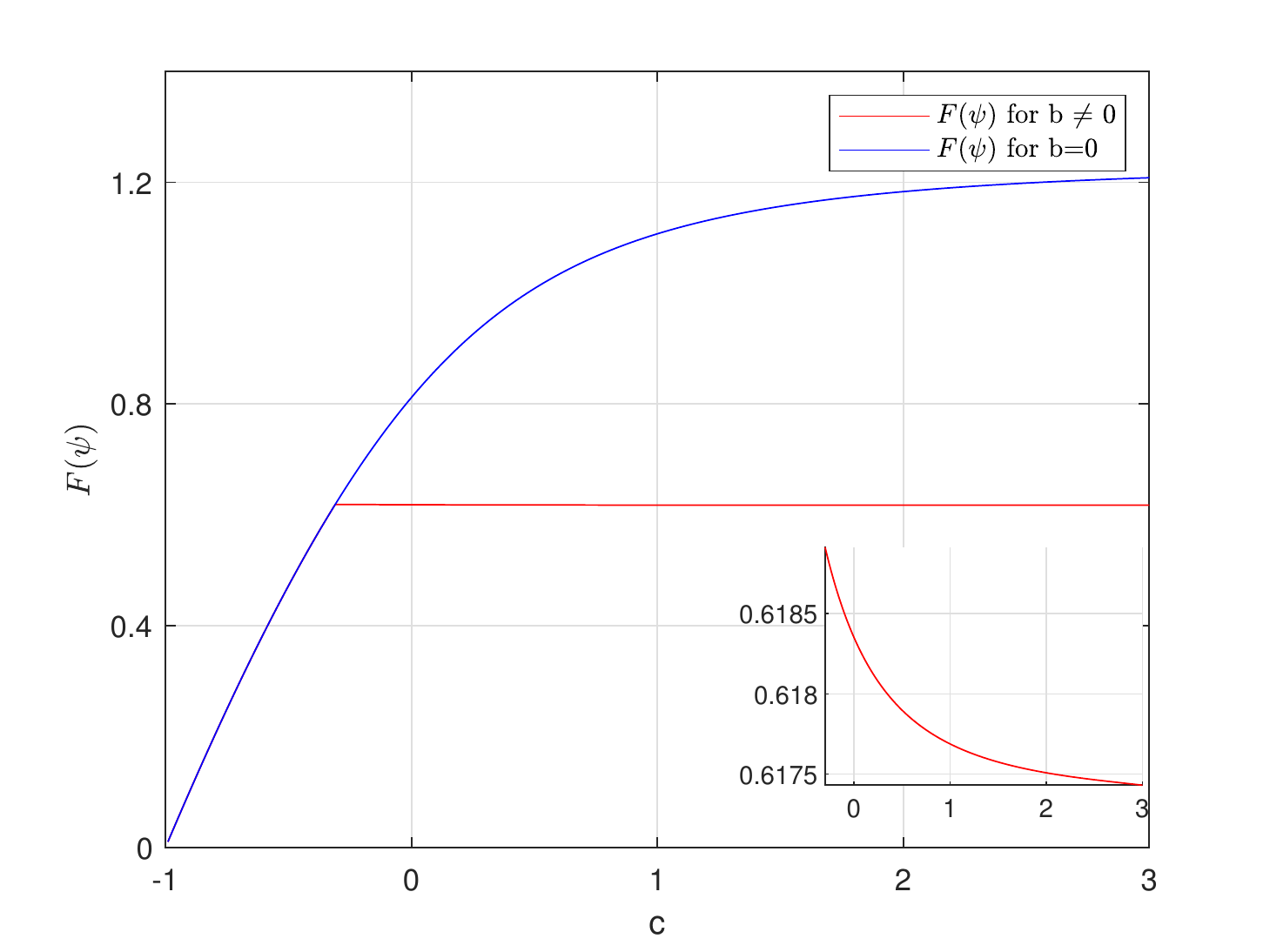}
	\end{subfigure}
	\begin{subfigure}{0.5\textwidth}
		\centering
		\includegraphics[width=1.0\linewidth]{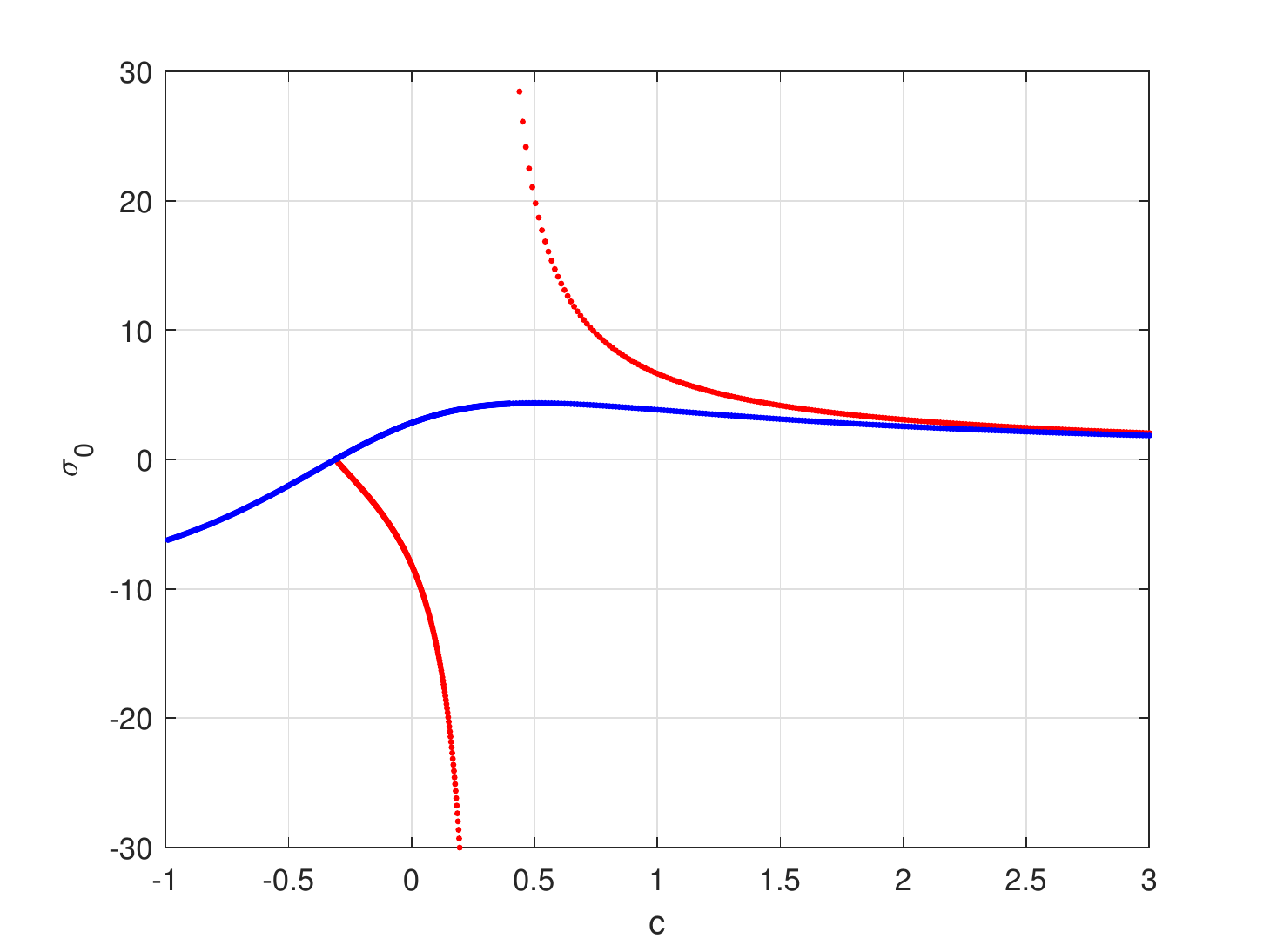}
	\end{subfigure}
	\begin{subfigure}{0.49\textwidth}
		\centering
		\includegraphics[width=1.0\linewidth]{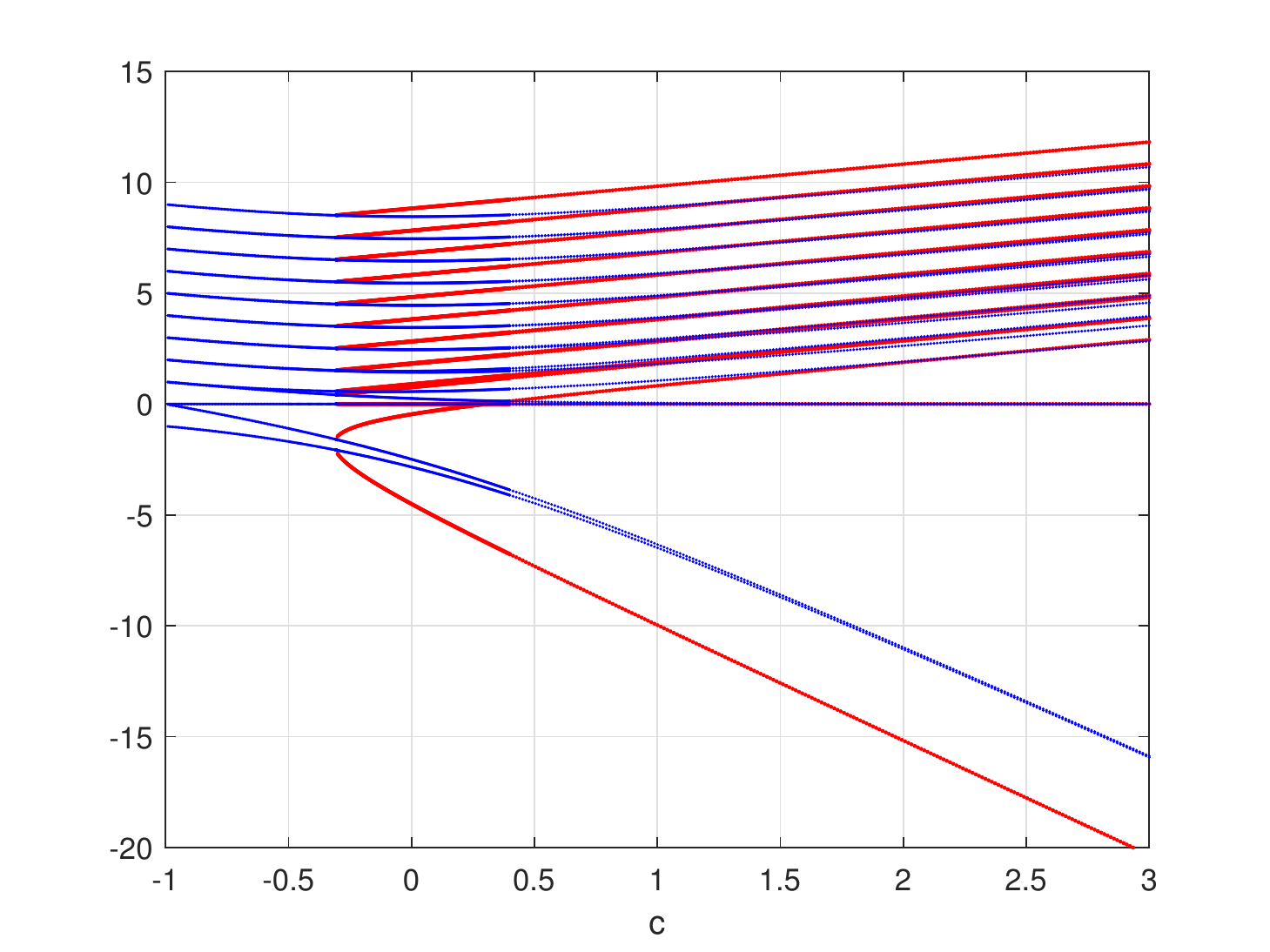}
	\end{subfigure}
	\caption{The same as Figure \ref{fig:bifurcationalpha2} but for $\alpha =1$.}
	\label{fig:bifurcationalpha1}
\end{figure}

Figure \ref{fig:bifurcationalpha2} presents the periodic wave solutions to the stationary equation
(\ref{ode-stat}) for $\alpha = 2$. The top left panel shows the profiles of $\psi$
of the family with $b = 0$ for three different values of $c$: near the Stokes wave limit (blue curve),
near the bifurcation point $c_*$ (black curve) and when $c$ is away from the bifurcation point $c_*$ (red curve).
The top right panel shows the profiles of $\psi$ of the bifurcating family with $b \neq 0$
near the bifurcation point $c_*$ (black curve) and increasingly away from the bifurcation point (blue and red curves).
The vertical lines show the symmetry points at $x = \pm \pi/2$. The family with $b = 0$ has odd symmetry with
respect to these points, whereas the family with $b \neq 0$ does not have this symmetry; both families
are even at $x = 0$ and $x = \pm \pi$.

The middle left panel of Figure \ref{fig:bifurcationalpha2} shows the dependence of $b$ in the stationary equation
(\ref{ode-stat}) versus speed $c$.
The pitchfork bifurcation point is located at $c_* \approx 1.425$. 
The two symmetric branches of solutions with $b > 0$ and $b < 0$ 
are obtained by using the positive and negative perturbations
to the family of solutions with $b = 0$.

The middle right panel of Figure \ref{fig:bifurcationalpha2} shows the momentum $F(\psi)$ versus $c$. The bottom left panel shows the dependence of $\sigma_0$ versus $c$. The bottom right panel displays the lowest eigenvalues of $\mathcal{L}$ versus $c$. The blue curve shows the family of solutions with $b = 0$, whereas the red curve shows the family of solutions with $b \neq 0$. 

The momentum $F(\psi)$ is increasing function of $c$ for both the families. In agreement with the theory, $\sigma_0$ for the family with $b = 0$
changes sign from negative to positive when $c$ passes through the bifurcation point $c_*$. By Theorem \ref{theorem-stability}, it follows that the family of solutions with $b = 0$ is spectrally stable
for $c < c_*$ and spectrally unstable for $c > c_*$. 

On the other hand, the bifurcating family with $b \neq 0$ has $\sigma_0 < 0$ near the bifurcation point but there exists another point $\hat{c}_* > c_*$ such that $\sigma_0$ diverges at $c = \hat{c}_*$ and becomes positive for $c > \hat{c}_*$. This agrees with the behavior of the lowest eigenvalues of $\mathcal{L}$ since $z(\mathcal{L}) = 2$ at $c = \hat{c}_*$, 
$n(\mathcal{L}) = 2$ for $c < \hat{c}_*$ and $n(\mathcal{L}) = 1$ for $c > \hat{c}_*$. Lemma \ref{lemma-characterization} and Theorem \ref{theorem-stability} are trivially extended to the family with $b \neq 0$ 
and they confirm that for both cases of $c < \hat{c}_*$ and $c > \hat{c}_*$, the periodic waves of the family with $b \neq 0$ correspond to
minimizers of the constrained variational problem (\ref{infB}) and they are spectrally stable for $c > c_*$.

Figure \ref{fig:bifurcationalpha1} presents the periodic wave solutions to the stationary equation
(\ref{ode-stat}) for $\alpha = 1$. Note that bifurcation point $c_*$ moves to the left
and becomes $c_* \approx -0.310$. The existence and stability of the family of solutions with $b = 0$
is very similar with the only difference that the dependence of the momentum $F(\psi)$ versus speed $c$
approaches the horizontal asymptote as $c \to \infty$ since $\alpha = 1$ is the $L^2$-critical modified Benjamin--Ono equation \cite{Kalish,KMR,Martel-Pilod} and the periodic waves 
with the single-lobe profile converge to the solitary waves 
in the limit $c \to \infty$.

The stability of the family of solutions with $b\ne 0$ is however
different. The momentum $F(\psi)$ is a decreasing function of the speed $c$, as the insert shows, hence the family of solutions
is spectrally unstable for all $c > c_*$. It also approaches to the horizontal asymptote as $c \to \infty$.
Profiles of both the families in the limit of large $c$ approach the soliton profile, but the family
with $b = 0$ contains two solitons on the period, whereas the family with $b \neq 0$ contains a single
soliton on the period. Hence the momentum $F(\psi)$ of the family with $b = 0$ approaches the double horizontal asymptote
as $c \to \infty$ compared to the momentum $F(\psi)$ of the family with $b \neq 0$. 

We have checked again that $\sigma_0$ along the family with $b = 0$ changes sign from negative to positive at the bifurcation point $c = c_*$, 
whereas $\sigma_0$ along the family with $b \neq 0$ is negative for 
$c \in (c_*,\hat{c}_*)$ and positive for $c \in (\hat{c}_*,\infty)$, 
where $\hat{c}_*$ is the point where $z(\mathcal{L}) = 2$ along the family with $b \neq 0$.

\section{Even periodic waves}

Here we consider {\em the even periodic waves} and provide a proof of Theorem \ref{theorem-main-2}.

The odd periodic wave constructed in Theorem \ref{theorem-existence} and Corollary \ref{corollary-existence}
is even after translation $x \mapsto x - \pi/2$. However, since $n(\mathcal{L}) = 2$
by Lemma \ref{lemma-index}, the odd periodic wave translated into an even function
cannot be a solution of the constrained minimization problem with a single constraint. Therefore, the same constrained
minimization problem (\ref{minBfunc}) in a subspace of even functions yields a different branch
of periodic waves.

The following theorem gives the construction and properties of the even periodic waves.

\begin{theorem}
\label{theorem-existence-even}
Let $\alpha > \frac{1}{2}$ be fixed. For every $c > 0$, there exists
the ground state (minimizer) $\chi \in H^{\frac{\alpha}{2}}_{\rm per, even}$
of the following constrained minimization problem:
\begin{equation}
\label{minBfunc-even}
q_{c, {\rm even}} := \inf_{u\in H^{\frac{\alpha}{2}}_{\rm per, even}} \left\{ \mathcal{B}_c(u) : \quad \int_{-\pi}^{\pi} u^4 dx = 1 \right\},
\end{equation}
with the same $\mathcal{B}_c(u)$ as in (\ref{def-B}).
There exists $C > 0$ such that $\psi(x) = C \chi(x)$ satisfies the stationary equation (\ref{ode-stat}) with $b = 0$.
If $\alpha \leq 2$, 
the ground state is the constant solution for  $c \in \left(0,\frac{1}{2}\right]$ and has the single-lobe profile for $c \in \left(\frac{1}{2},\infty\right)$.
\end{theorem}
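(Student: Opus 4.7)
The plan is to follow the variational approach of Theorem~\ref{theorem-existence}, adapted to the even subspace, and then to distinguish the constant and non-constant regimes via a bifurcation analysis at $c = \frac{1}{2}$.

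For the existence of a minimizer $\chi \in H^{\frac{\alpha}{2}}_{\rm per, even}$, the argument is nearly identical to Theorem~\ref{theorem-existence}, with one key modification. Since the even subspace contains constants, the Poincar\'e bound (\ref{positivity}) is no longer available. Instead, for $c > 0$ one notes that the spectrum of $c + D^\alpha$ on $L^2_{\rm even}$ is $\{c + m^\alpha : m \geq 0\}$, bounded below by $\min(c, 1+c) > 0$, so G\aa rding's inequality yields $\mathcal{B}_c(u) \geq C(c) \|u\|_{H^{\frac{\alpha}{2}}}^2$. The compactness argument then proceeds as before using $H^{\frac{\alpha}{2}}_{\rm per, even} \hookrightarrow L^4_{\rm even}$ (for $\alpha > \frac{1}{2}$), weak lower semi-continuity, and preservation of the $L^4$ constraint. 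The rescaling $\psi = \sqrt{\mathcal{B}_c(\chi)}\,\chi$ from Corollary~\ref{corollary-existence} then produces a solution of (\ref{ode-stat}) with $b = 0$.

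Next, I would analyze the structure of the minimizer. The constant function $\chi_0 := (2\pi)^{-1/4}$ is an admissible competitor with $\mathcal{B}_c(\chi_0) = c\sqrt{\pi/2}$, yielding the constant solution $\psi_0 = \sqrt{c/2}$ of (\ref{ode-stat}). Computing the Hessian of the Lagrangian at $\chi_0$ restricted to the tangent space of the constraint (which at $\chi_0$ coincides with the zero-mean subspace of $L^2_{\rm even}$), one finds the operator $D^\alpha - 2c$ with spectrum $\{m^\alpha - 2c : m \geq 1\}$. The smallest eigenvalue $1 - 2c$ is nonnegative precisely when $c \leq \frac{1}{2}$, so $\chi_0$ is a strict local minimizer in that range and a saddle point for $c > \frac{1}{2}$. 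To confirm that the constant ceases to be the minimum for $c > \frac{1}{2}$, test with $u_t := \chi_0 + t \cos x$ and renormalize in $L^4$; an explicit expansion gives
\begin{equation*}
\mathcal{B}_c\bigl(u_t / \|u_t\|_{L^4}\bigr) = c\sqrt{\pi/2} + \frac{\pi}{2}(1 - 2c)\, t^2 + \mathcal{O}(t^4),
\end{equation*}
which lies strictly below $c\sqrt{\pi/2}$ for small $t \neq 0$ when $c > \frac{1}{2}$. Hence every minimizer is non-constant in that regime, and applying the fractional P\'olya--Szeg\H{o} inequality (Lemma~A.1 in \cite{CJ2019}) to $|\chi|$ ensures it may be chosen non-negative and symmetric decreasing from a single maximum, producing the single-lobe profile on $\mathbb{T}$.

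The hardest step will be rigorously establishing global minimality of the constant $\chi_0$ for every $c \in \left(0, \frac{1}{2}\right]$, since the second-variation analysis only gives local minimality. My plan is to decompose any admissible $u = a + v$ with $v$ zero-mean even: the constraint expands as
\begin{equation*}
2\pi a^4 + 6 a^2 \|v\|_{L^2}^2 + 4 a \int_{-\pi}^{\pi} v^3 \, dx + \|v\|_{L^4}^4 = 1,
\end{equation*}
while $\mathcal{B}_c(u) = \pi c a^2 + \frac{1}{2}\|D^{\frac{\alpha}{2}} v\|_{L^2}^2 + \frac{c}{2} \|v\|_{L^2}^2$. Using the Poincar\'e bound $\|D^{\frac{\alpha}{2}} v\|_{L^2}^2 \geq \|v\|_{L^2}^2$ on zero-mean $v$ together with H\"older on the sign-indefinite cubic cross-term $4a \int v^3 \, dx$, one would close the comparison $\mathcal{B}_c(u) \geq c\sqrt{\pi/2}$ uniformly in $c \leq \frac{1}{2}$. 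If this direct route proves intractable, a fallback is a continuation argument tracing the non-constant critical branch from its bifurcation at $c = \frac{1}{2}$ and showing that it does not extend to $c < \frac{1}{2}$ with value below $c\sqrt{\pi/2}$.
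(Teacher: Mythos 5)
Your existence argument, the rescaling $\psi=\sqrt{\mathcal{B}_c(\chi)}\,\chi$, and the use of symmetric rearrangement to reduce to ``constant or single-lobe'' all coincide with the paper's proof; the coercivity fix for the even subspace (lowest eigenvalue $c>0$ instead of $1+c$) is exactly what is needed. Your treatment of the regime $c>\frac12$ is a correct and more explicit substitute for the paper's argument: the paper observes that $\mathcal{L}=D^{\alpha}-2c$ at the constant has $n(\mathcal{L})\geq 2$ for $c>\frac12$, which is incompatible with minimality under a single constraint, whereas you exhibit the competitor $u_t=\chi_0+t\cos x$ directly; I checked the coefficient $\frac{\pi}{2}(1-2c)t^2$ and it is right. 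Either route legitimately forces the minimizer to be non-constant, hence single-lobe, for $c>\frac12$.

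The genuine gap is the claim that the constant is the \emph{global} minimizer for $c\in\left(0,\frac12\right]$, and the primary route you propose for it will not close. After applying Poincar\'e to discard $\|D^{\frac{\alpha}{2}}v\|_{L^2}^2$ in favour of $\|v\|_{L^2}^2$, your lower bound on $\mathcal{B}_c(a+v)$ controls only $\|v\|_{L^2}$, while the constraint is an $L^4$ normalization; since $\|v\|_{L^2}$ can be made arbitrarily small relative to $\|v\|_{L^4}$ (concentrating profiles), the comparison $\mathcal{B}_c(u)\geq c\sqrt{\pi/2}$ degenerates and no H\"older estimate on $4a\int v^3$ rescues it --- the fractional kinetic term cannot be thrown away. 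The difficulty is not cosmetic: for $\alpha<\alpha_0$ the bifurcation at $c=\frac12$ is subcritical (Proposition \ref{prop-Stokes-even}), so non-constant single-lobe critical points coexist with the constant for $c$ slightly below $\frac12$, and any direct energy comparison must be sharp enough to rank them above the constant. The paper does not attempt such a comparison; it settles this case in one line by combining the rearrangement dichotomy (the global minimizer is constant or single-lobe) with the Morse-index count $n(D^{\alpha}-2c)=1$ for $c\leq\frac12$, i.e.\ the constant remains an admissible constrained minimizer there while for $c>\frac12$ it is excluded. If you want a complete argument for $c\leq\frac12$ you should either adopt that index-based characterization or carry out your fallback continuation argument; the Poincar\'e--H\"older route should be abandoned.
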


\begin{proof}
It follows that $\mathcal{B}_c$ is a smoot, bounded, and coercive functional on $H^{\frac{\alpha}{2}}_{\rm per, even}$ if $c > 0$, hence $q_{c, {\rm even}} \geq 0$. 
It follows from the bound (\ref{Sob-embedding}) and the constraint in (\ref{minBfunc-even}) that $q_{c, {\rm even}} > 0$. The existence of the minimizer $\chi \in H^{\frac{\alpha}{2}}_{\rm per, even}$ of the constrained minimization problem
\eqref{minBfunc-even} is proven exactly like in the proof of Theorem \ref{theorem-existence}.
Moreover, for $\alpha \in (0,2]$, the symmetric rearrangements suggest that the minimizer $\chi \in H^{\frac{\alpha}{2}}_{\rm per, even}$
is either constant or it must decrease symmetrically away from the maximum point.

In order to ensure that the minimizer has the single-lobe profile, we need to eliminate the constant solution
in $H^{\frac{\alpha}{2}}_{\rm per, even}$. By Lagrange's Multiplier Theorem, the constrained minimizer $\chi \in H^{\frac{\alpha}{2}}_{\rm per, even}$ satisfies
the stationary equation
\begin{equation}
\label{lagrange-even}
D^{\alpha}\chi +c \chi = \mu \chi^3,
\end{equation}
where $\mu = 2 \mathcal{B}_c(\chi)$ due to the normalization in (\ref{minBfunc-even}).
Since $\mathcal{B}_c(\chi) > 0$, the scaling transformation $\psi = C \chi$
with $C := \sqrt{\mathcal{B}_c(\chi)}$ maps the stationary equation (\ref{lagrange-even})
to the form (\ref{ode-stat}) with $b = 0$.
The constant nonzero solution to the stationary equation (\ref{ode-stat}) with $b = 0$  is given by
$\psi(x) = \sqrt{c/2}$ up to a sign choice. The linearization operator
$\mathcal{L}$ in (\ref{operator}) evaluated at the constant solution is given by
$$
\mathcal{L} = D^{\alpha} + c - 6 \psi^2 = D^{\alpha} - 2c.
$$
Since $n(\mathcal{L}) = 1$ if and only if $c \in \left( 0, \frac{1}{2} \right]$,
the constant wave is a constrained minimizer of
(\ref{minBfunc-even}) for $c \in \left( 0, \frac{1}{2} \right]$
and a saddle point of (\ref{minBfunc-even}) for $c \in \left( \frac{1}{2}, \infty \right)$. By the symmetric rearrangements, the global minimizer
is given by the constant solution in the former case and by a  non-constant solution with the single-lobe
profile in the latter case.
\end{proof}

\begin{remark}
Periodic waves obtained in Theorem \ref{theorem-existence-even} do not satisfy
the boundary-value problem (\ref{ode-bvp}) if the mean value of $\psi \in H^{\alpha}_{\rm per, even}$ is nonzero.  
\end{remark}

Let $\psi \in H^{\alpha}_{\rm per, even}$ be a solution to the
stationary equation (\ref{ode-stat}) with $b = 0$ for $c \in \left(\frac{1}{2},\infty\right)$
obtained by Theorem \ref{theorem-existence-even}. We introduce again the
linearized operator $\mathcal{L}$ by (\ref{operator}) and (\ref{operator-c}).
Equalities (\ref{range-1}) and (\ref{range-2}) hold true for the even periodic wave
and so do Propositions \ref{prop-nodal}, \ref{prop-kernel}, and \ref{prop-odd}
(no translation is needed for the last proposition).

The following lemma presents the count of $n(\mathcal{L})$ and $z(\mathcal{L})$ for the even periodic wave.

\begin{lemma}
\label{lemma-index-even}
Let $\alpha \in (\frac{1}{2},2]$ and $\psi \in H^{\alpha}_{\rm per, even}$ be a 
solution obtained in Theorem \ref{theorem-existence-even}. Then, $n(\mathcal{L}) = 1$ and
$$
z(\mathcal{L}) = \left\{ \begin{array}{l} 1, \quad \mbox{\rm if  } 1 \in {\rm Range}(\mathcal{L}), \\
2, \quad \mbox{\rm if  } 1 \notin {\rm Range}(\mathcal{L}). \end{array} \right.
$$
\end{lemma}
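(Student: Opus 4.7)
The plan is to mirror Lemma \ref{lemma-index}, splitting $L^2(\mathbb{T})$ into even and odd subspaces about $x = 0$. This decomposition is $\mathcal{L}$-invariant since $\psi$ is even, so $n(\mathcal{L})$ and $z(\mathcal{L})$ decompose additively across parities. Throughout, I place the unique maximum of the single-lobe $\psi$ at $x = 0$, so that Proposition \ref{prop-odd} applies directly (no translation is needed, as noted in the excerpt).

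For $n(\mathcal{L})$, the even subspace contains $\psi$, and $\psi$ is a ground state of the single-constraint problem (\ref{minBfunc-even}). The standard one-constraint Morse index bound gives $n(\mathcal{L}|_{L^2_{\rm even}}) \leq 1$, and $\langle \mathcal{L}\psi, \psi \rangle = -4 \|\psi\|_{L^4}^4 < 0$ from (\ref{range-2}) with $\psi \in L^2_{\rm even}$ shows $n(\mathcal{L}|_{L^2_{\rm even}}) \geq 1$, so $n(\mathcal{L}|_{L^2_{\rm even}}) = 1$. In the odd subspace, Proposition \ref{prop-odd} identifies $\partial_x \psi \in {\rm Ker}(\mathcal{L}) \cap L^2_{\rm odd}$ as the ground state, so $n(\mathcal{L}|_{L^2_{\rm odd}}) = 0$. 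Summing, $n(\mathcal{L}) = 1$.

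For $z(\mathcal{L})$, I always have $z \geq 1$ from $\partial_x \psi$. If $1 \in {\rm Range}(\mathcal{L})$, then (\ref{range-1}) yields $\psi^2 \in {\rm Range}(\mathcal{L})$, (\ref{range-2}) yields $\psi \in {\rm Range}(\mathcal{L})$, and Proposition \ref{prop-kernel} collapses the kernel to ${\rm span}(\partial_x \psi)$, so $z = 1$. If instead $1 \notin {\rm Range}(\mathcal{L}) = {\rm Ker}(\mathcal{L})^{\perp}$, there is some $f \in {\rm Ker}(\mathcal{L})$ with $\langle 1, f \rangle \neq 0$; decomposing $f = f_e + f_o$ with $f_e, f_o \in {\rm Ker}(\mathcal{L})$ by parity invariance and observing $\langle 1, f_o \rangle = 0$, the even part $f_e$ is a nonzero zero-eigenfunction linearly independent from $\partial_x \psi$, giving $z \geq 2$.

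The main obstacle is the matching upper bound $z(\mathcal{L}) \leq 2$ in the case $1 \notin {\rm Range}(\mathcal{L})$, i.e., ruling out more than one even zero-eigenfunction. I would argue this block-by-block: in $L^2_{\rm odd}$, the ground-state eigenvalue is simple because $\partial_x \psi$ is sign-definite on the open half-period $(0, \pi)$ by the single-lobe property, and a positivity/Krein--Rutman argument for the fractional operator $\mathcal{L}$ with Dirichlet-type boundary conditions gives $z(\mathcal{L}|_{L^2_{\rm odd}}) = 1$; in $L^2_{\rm even}$, a hypothetical two-dimensional zero eigenspace would, via the second-eigenvalue nodal bound in Proposition \ref{prop-nodal} applied in the even subspace together with the strict negativity of the first even eigenvalue, force a contradiction with the nodal-domain count. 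The essential new difficulty compared to Lemma \ref{lemma-index} is that $\psi^2$ is only $2\pi$-periodic here, whereas the odd wave enjoyed the anti-periodicity $\psi(x+\pi) = -\psi(x)$ which made $\psi^2$ have period $\pi$ and enabled the parity argument at $x = \pi/2$; that trick is unavailable and must be replaced by the per-parity simplicity arguments above, supplemented by Proposition \ref{prop-kernel}.
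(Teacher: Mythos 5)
Your computation of $n(\mathcal{L})$ and your argument that $z(\mathcal{L})\geq 2$ when $1\notin{\rm Range}(\mathcal{L})$ match the paper. The genuine gap is in the case $1\in{\rm Range}(\mathcal{L})$: you assert that (\ref{range-2}) yields $\psi\in{\rm Range}(\mathcal{L})$, but $\mathcal{L}\psi=-4\psi^3$ gives $\psi^3\in{\rm Range}(\mathcal{L})$, not $\psi\in{\rm Range}(\mathcal{L})$. Consequently the hypothesis $\{1,\psi,\psi^2\}\in{\rm Range}(\mathcal{L})$ of Proposition \ref{prop-kernel} is not verified and the kernel does not collapse by that route. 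This is not a cosmetic slip: for the odd wave, the paper obtains $\psi\in{\rm Range}(\mathcal{L})$ in Lemma \ref{lemma-index} only through the continuation Lemma \ref{prop-continuation}, which rests on $z(\mathcal{L}|_{L^2_{\rm odd}})=0$ --- a fact that fails here because $\psi$ is even. The analogous continuation for the even wave (Lemma \ref{lem-continuation-even}) is proved later and itself relies on the present lemma, so you cannot import it without circularity; and $\psi\perp{\rm Ker}(\mathcal{L})$ is exactly equivalent to the conclusion you are trying to reach, so it cannot be an input.

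The repair is precisely the paper's device, which is why its proof looks different from Lemma \ref{lemma-index}: use $\psi^3$ rather than $\psi$ as the second test function. By (\ref{range-2}), $\psi^3\in{\rm Range}(\mathcal{L})$ unconditionally, and $\psi^3$ is again single-lobe. If $h$ is an even element of ${\rm Ker}(\mathcal{L})$ and $1\in{\rm Range}(\mathcal{L})$, then $\langle h,1\rangle=0$ forces $h$ to change sign, and by the nodal bound of Proposition \ref{prop-nodal} (zero being the second eigenvalue since $n(\mathcal{L})=1$) it has exactly two symmetric nodes $\pm x_0$; then $h\,(\psi^3-\psi^3(x_0))$ is sign-definite, contradicting $\langle h,\psi^3\rangle=0$. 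This closes the case $1\in{\rm Range}(\mathcal{L})$ without ever needing $\psi\in{\rm Range}(\mathcal{L})$. Your remaining sketches (simplicity of the zero eigenvalue in $L^2_{\rm odd}$, and the nodal-count exclusion of a two-dimensional even kernel) are at the same level of detail as the paper's appeal to \cite{hur} and are acceptable, and your closing observation about the loss of the antiperiodicity $\psi(x+\pi)=-\psi(x)$ correctly identifies why the odd-wave argument at $x=\pi/2$ is unavailable.
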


\begin{proof}
Since $\psi \in H^{\alpha}_{\rm per, even}$ is a minimizer of the constrained variational problem
(\ref{minBfunc-even}) with only one constraint, we have $n (\mathcal{L} |_{L^2_{\rm even}}) \leq 1$.
On the other hand, we have
$$
\langle \mathcal{L} \psi, \psi \rangle_{L^2} = - 4 \| \psi \|^4_{L^4} < 0,
$$
with even $\psi$, hence $n(\mathcal{L} |_{L^2_{\rm even}}) \geq 1$, so that $n(\mathcal{L} |_{L^2_{\rm even}}) = 1$.
By Proposition \ref{prop-odd} (without translation), $n(\mathcal{L} |_{L^2_{\rm odd}}) = 0$ and
$z(\mathcal{L} |_{L^2_{\rm odd}}) = 1$.
Hence, $n(\mathcal{L}) = 1$.

It remains to consider $z(\mathcal{L}) \geq 1$. Since $0$ is the second eigenvalue of $\mathcal{L}$,
Proposition \ref{prop-nodal} suggests that if $z(\mathcal{L}) = 2$, then the even eigenfunction of ${\rm Ker}(\mathcal{L})$
has at most two symmetric nodes on $\mathbb{T}$. If the periodic wave has the single-lobe profile $\psi$, then $\psi^3$ has also the single-lobe profile. By using the same argument as in the proof of Proposition 3.1 in \cite{hur},
it follows that $z(\mathcal{L}) = 1$ if and only if $\{ 1, \psi^3 \} \in {\rm Range}(\mathcal{L})$.

Indeed, if $h \in {\rm Ker}(\mathcal{L})$ is an even eigenfunction in the case $z(\mathcal{L}) = 2$
and $\{ 1, \psi^3 \} \in {\rm Range}(\mathcal{L})$, then $\langle h, 1 \rangle = 0$ and $\langle h, \psi^3 \rangle = 0$.
The first condition suggests that $h$ is sign-indefinite with exactly two symmetric nodes at $\pm x_0$ with $x_0 \in (0,\pi)$,
but then $\langle h, \psi^3 - \psi^3(x_0) \rangle$ is sign-definite and cannot be zero, so that no $h \in {\rm Ker}(\mathcal{L})$
exists.

Since $\psi^3 \in {\rm Range}(\mathcal{L})$ due to equation (\ref{range-2}), it follows that
$z(\mathcal{L}) = 1$ if and only if $1 \in {\rm Range}(\mathcal{L})$.
\end{proof}

The definition of $\mathcal{L}|_{X_0}$, where $X_0 \subset L^2(\mathbb{T})$
is defined by (\ref{zero}), is the same as in (\ref{operator-zero}). The result of Lemma \ref{lem-mean-value}
holds for the even periodic wave $\psi \in H^{\alpha}_{\rm per, even}$. In order to count
the indices $n(\mathcal{L} |_{X_0})$ and $z(\mathcal{L} |_{X_0})$, we shall re-parameterize
the even periodic wave to the zero-mean periodic waves.

Let us define $\psi(x) = a + \phi(x)$, where $a := \frac{1}{2\pi} \int_{-\pi}^{\pi} \psi(x) dx$.
Then, $\phi \in H^{\alpha}_{\rm per, even} \cap X_0$ is a solution of the stationary equation:
\begin{equation}
\label{ode-stat-phi}
D^{\alpha} \phi + \omega \phi + \beta = 2 \left( \phi^3 + 3 a \phi^2 \right),
\end{equation}
where $\omega := c - 6a^2$ and $\beta := c a - 2 a^3$. Since $\phi$ has zero mean, $\beta$ can be equivalently written as
\begin{equation}
\label{beta-expression}
\beta := \frac{1}{\pi} \int_{-\pi}^{\pi} \left( \phi^3 + 3 a \phi^2 \right) dx,
\end{equation}
so that the stationary equation (\ref{ode-stat-phi}) can be rewritten as the boundary-value problem:
\begin{equation}
\label{ode-bvp-phi}
D^{\alpha} \phi + \omega \phi = 2 \Pi_0 \left( \phi^3 + 3 a \phi^2 \right).
\end{equation}
The following lemma presents the computation of $n(\mathcal{L} |_{X_0})$ and $z(\mathcal{L} |_{X_0})$.

\begin{lemma}
\label{lemma-characterization-even}
Let $\alpha \in (\frac{1}{2},2]$ and $\psi \in H^{\alpha}_{\rm per, even}$ be a 
solution obtained in Theorem \ref{theorem-existence-even}. Assume that $\omega \in (-1,\infty)$
after the transformation to the stationary equation (\ref{ode-stat-phi}).
Then, $n(\mathcal{L} |_{X_0}) = 1$ and $z(\mathcal{L} |_{X_0}) = 1$.
\end{lemma}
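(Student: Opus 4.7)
The strategy is to sandwich both indices. A direct computation on a natural test direction in $X_0$ provides the lower bound $n(\mathcal{L}|_{X_0}) \geq 1$; Cauchy interlacing against the full operator $\mathcal{L}$ gives the matching upper bounds; and the range--kernel orthogonality of the self-adjoint $\mathcal{L}$ treats the potentially degenerate case $z(\mathcal{L}) = 2$. I expect the algebraic cancellation in the quadratic-form calculation and the treatment of the case $1 \notin \mathrm{Range}(\mathcal{L})$ to be the two main technical points.

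For the lower bound I would test against $\phi = \psi - a \in H^{\alpha}_{\rm per, even} \cap X_0$, which is nonzero because $\psi$ is single-lobe by Theorem \ref{theorem-existence-even}. Substituting $\psi = a + \phi$ into $\mathcal{L} = D^{\alpha} + c - 6\psi^2$ produces $\mathcal{L} = D^{\alpha} + \omega - 12 a \phi - 6 \phi^2$, and pairing the boundary-value problem (\ref{ode-bvp-phi}) with $\phi$ (using $\int \phi\,dx = 0$) yields $\|D^{\alpha/2}\phi\|_{L^2}^2 + \omega \|\phi\|_{L^2}^2 = 2 \|\phi\|_{L^4}^4 + 6 a \int_{-\pi}^{\pi} \phi^3\,dx$. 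Substituting this to eliminate the cross-term $-12 a \int \phi^3\,dx$ in $\langle \mathcal{L}\phi, \phi\rangle$ cancels the $a$-dependence entirely and gives the clean identity
\[
\langle \mathcal{L} \phi, \phi\rangle = -\|D^{\alpha/2}\phi\|_{L^2}^2 - \omega \|\phi\|_{L^2}^2 - 2 \|\phi\|_{L^4}^4.
\]
The Poincar\'e-type bound $\|D^{\alpha/2}\phi\|_{L^2}^2 \geq \|\phi\|_{L^2}^2$ on $X_0$ (from $|n|^{\alpha} \geq 1$ for $n \neq 0$) combined with $\omega > -1$ forces this quantity to be at most $-(1+\omega)\|\phi\|_{L^2}^2 - 2\|\phi\|_{L^4}^4 < 0$, so $n(\mathcal{L}|_{X_0}) \geq 1$.

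For the upper bounds I would invoke Cauchy interlacing: since $\mathcal{L}|_{X_0} = \Pi_0 \mathcal{L} \Pi_0$ is the compression of $\mathcal{L}$ to the codimension-one subspace $X_0 = \{1\}^{\perp}$, its ordered eigenvalues $\mu_0 \leq \mu_1 \leq \cdots$ satisfy $\lambda_k \leq \mu_k \leq \lambda_{k+1}$ via the min--max principle, where $\lambda_0 \leq \lambda_1 \leq \cdots$ are the eigenvalues of $\mathcal{L}$. Lemma \ref{lemma-index-even} gives $n(\mathcal{L}) = 1$, hence $\lambda_1 \geq 0$ and $\mu_1 \geq 0$, so $n(\mathcal{L}|_{X_0}) = 1$. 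Moreover $\partial_x \phi \in \ker(\mathcal{L})$ is odd and thus lies in $X_0$, yielding $z(\mathcal{L}|_{X_0}) \geq 1$. The most delicate step is the upper bound $z(\mathcal{L}|_{X_0}) \leq 1$. If $1 \in \mathrm{Range}(\mathcal{L})$ then $z(\mathcal{L}) = 1$, so $\lambda_2 > 0$ and interlacing directly yields $\mu_2 > 0$. If instead $1 \notin \mathrm{Range}(\mathcal{L})$ then $\lambda_1 = \lambda_2 = 0 < \lambda_3$, and any additional zero eigenvector $g \in X_0$ of $\mathcal{L}|_{X_0}$ would satisfy $\mathcal{L} g \in \mathrm{span}(1)$, which forces $\mathcal{L} g = 0$; but the extra kernel element $h$ of $\ker(\mathcal{L})$ beyond $\partial_x \phi$ must satisfy $\langle h, 1\rangle \neq 0$, since otherwise $1 \perp \ker(\mathcal{L})$ would give $1 \in \mathrm{Range}(\mathcal{L})$ by self-adjointness of $\mathcal{L}$. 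Hence $h \notin X_0$, so $\ker(\mathcal{L}) \cap X_0 = \mathrm{span}(\partial_x \phi)$ and $\mu_2 > 0$, giving $z(\mathcal{L}|_{X_0}) = 1$.
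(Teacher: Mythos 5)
Your proof is correct. The negative-index count is essentially the paper's own argument in different clothing: your identity $\langle\mathcal{L}\phi,\phi\rangle=-\|D^{\alpha/2}\phi\|_{L^2}^2-\omega\|\phi\|_{L^2}^2-2\|\phi\|_{L^4}^4$ is, after the Pohozhaev-type identity obtained by pairing (\ref{ode-bvp-phi}) with $\phi$, the same quantity as the paper's $-4\int\phi^4\,dx-6a\int\phi^3\,dx=-2B_\omega(\phi)-2\int\phi^4\,dx$, and both arguments use $\omega>-1$ together with the Poincar\'e inequality on zero-mean functions to conclude negativity; the matching upper bound $n(\mathcal{L}|_{X_0})\le n(\mathcal{L})=1$ is the same min--max fact in both cases. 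Where you genuinely diverge is the zero-eigenvalue count. The paper feeds the result into the abstract constrained-index identities (\ref{identneg-even}) from Theorem 4.1 of \cite{Pel-book}, deduces $n_0=z_0=0$ from the first identity, and then excludes $z(\mathcal{L}|_{X_0})=2$ by appealing to Lemma \ref{lem-mean-value} extended to the even wave. You instead argue directly: Cauchy interlacing disposes of the case $1\in{\rm Range}(\mathcal{L})$ (where $\lambda_2>0$), and in the degenerate case you observe that any $g\in{\rm Ker}(\mathcal{L}|_{X_0})$ satisfies $\mathcal{L}g\in{\rm span}(1)$, which forces $\mathcal{L}g=0$ when $1\notin{\rm Range}(\mathcal{L})$, while the Fredholm alternative forces the extra kernel element $h$ of $\mathcal{L}$ to have nonzero mean, so ${\rm Ker}(\mathcal{L})\cap X_0={\rm span}(\partial_x\psi)$. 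Your route is more elementary and self-contained (no black-box index theorem, no auxiliary lemma), at the cost of not producing the relation $z(\mathcal{L}|_{X_0})=z(\mathcal{L})-z_\infty$ and the sign information on $\sigma_0$ (Remark \ref{remark-sigma-0}) that the paper extracts from the same computation and reuses in the stability proof. One cosmetic caveat: your test-function step needs $\phi\neq 0$, i.e.\ the non-constant (single-lobe) wave, which is the implicit standing assumption of the section and is equally implicit in the paper's proof.
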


\begin{proof}
Transformation $\psi = a + \phi$ and $\omega = c - 6 a^2$ changes $\mathcal{L}$ given by (\ref{operator})
into the equivalent form:
\begin{equation}
\label{operator-tilde}
\mathcal{L} = D^{\alpha} + c - 6 \psi^2 = D^{\alpha} + \omega - 6 \phi^2 - 12 a \phi =: \tilde{\mathcal{L}}.
\end{equation}
Then, it follows directly that
\begin{equation}
\label{neg-L0-X0}
\langle \mathcal{L} |_{X_0} \phi, \phi \rangle = - 4 \int_{-\pi}^{\pi} \phi^4 dx - 6 a \int_{-\pi}^{\pi} \phi^3 dx.
\end{equation}
Taking an inner product of the stationary equation (\ref{ode-stat-phi}) with $\phi$ yields the Pohozhaev-type identity
\begin{equation}
B_{\omega}(\phi) = \int_{-\pi}^{\pi} \phi^4 dx + 3 a \int_{-\pi}^{\pi} \phi^3 dx.
\end{equation}
where $B_{\omega}(\phi)$ is defined by (\ref{def-B}). Since $\omega \in (-1,\infty)$ and $\phi \in H^{\alpha}_{\rm per, even} \cap X_0$,
we have $B_{\omega}(\phi) \geq 0$, so that the equality (\ref{neg-L0-X0}) can be estimated by
\begin{equation}
\label{neg-L0-X0-inequality}
\langle \mathcal{L} |_{X_0} \phi, \phi \rangle \leq  - 2 \int_{-\pi}^{\pi} \phi^4 dx < 0.
\end{equation}
Hence $n(\mathcal{L} |_{X_0}) \geq 1$ and since $n(\mathcal{L}) = 1$ by Lemma \ref{lemma-index-even},
we have $n(\mathcal{L} |_{X_0}) = 1$. By Theorem 4.1 in \cite{Pel-book}, we have
the following identities:
\begin{equation}
\label{identneg-even}
\left\{ \begin{array}{l}
n(\mathcal{L} |_{X_0}) = n(\mathcal{L}) - n_0 - z_0, \\
z(\mathcal{L} |_{X_0}) = z(\mathcal{L}) + z_0 - z_{\infty},
\end{array} \right.
\end{equation}
where $n_0 + z_0 = 1$ if $1 \in {\rm Range}(\mathcal{L})$ and $\sigma_0 := \langle \mathcal{L}^{-1} 1, 1 \rangle \leq 0$
and $z_{\infty} = 1$ if  $1 \notin {\rm Range}(\mathcal{L})$.
It follows from the first equality in (\ref{identneg-even}) 
that $n_0 = z_0 = 0$ since $n(\mathcal{L}) = n(\mathcal{L} |_{X_0}) = 1$.
Then, the second equality yields $z(\mathcal{L} |_{X_0}) = z(\mathcal{L}) - z_{\infty}$.
If $z(\mathcal{L} |_{X_0}) = 2$, then $z(\mathcal{L}) \geq 2$, which is in contradiction with
Lemma \ref{lem-mean-value} extended to the even periodic wave $\psi \in H^{\alpha}_{\rm per, even}$.
Hence, $z(\mathcal{L} |_{X_0}) = 1$, in which case $z(\mathcal{L}) = 1 + z_{\infty}$ in agreement with
Lemma \ref{lemma-index-even}.
\end{proof}

\begin{remark}
\label{remark-sigma-0}
It follows from the proof of Lemma \ref{lemma-characterization-even} that $\sigma_0 > 0$ if $1 \in {\rm Range}(\mathcal{L})$,
where $\sigma_0 := \langle \mathcal{L}^{-1} 1, 1 \rangle$.
\end{remark}

In order to derive the spectral stability result, we shall now extend solutions
to the stationary equation (\ref{ode-stat-phi}) with respect to two independent parameters
$(\omega,a)$ with $\beta$ being a $C^1$ function of $(\omega,a)$. Since
the periodic waves satisfy the stationary equation (\ref{ode-stat}) with $b = 0$, where $c$ is the only parameter, parameters $\omega$, $a$, and $\beta$ in the stationary equation (\ref{ode-stat-phi}) 
are parametrized by $c$, hence $a$ is not independent of $\omega$. 
The following lemma allows us to extend zero-mean solutions to the boundary-value
problem (\ref{ode-bvp-phi}) with respect to independent parameters $(\omega,a)$ near each
uniquely defined point $(\omega_0,a_0)$.

\begin{lemma}
\label{lem-continuation-even}
Assume $\alpha \in (\frac{1}{2},2]$ and $\phi_0 \in H^{\alpha}_{\rm per, even} \cap X_0$ be a solution to the boundary-value problem (\ref{ode-bvp-phi}) with $\omega = \omega_0 \in (-1,\infty)$
and $a = a_0 \in \mathbb{R}$. Then, there exists a $C^1$ mapping in an open subset of
$(\omega_0,a_0)$ denoted by $\mathcal{O} \subset \mathbb{R}^2$:
\begin{equation}
\label{map-in-omega-a}
\mathcal{O} \ni (\omega,a) \mapsto \phi(\cdot;\omega,a) \in H_{\rm per, even}^{\alpha} \cap X_0,
\end{equation}
such that $\phi(\cdot;\omega_0,a_0) = \phi_0$.
\end{lemma}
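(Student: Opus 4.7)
The plan is to apply the implicit function theorem in a two-parameter form, following the pattern of Lemma \ref{prop-continuation}. Define the map
\begin{equation*}
\Upsilon: (-1,\infty) \times \mathbb{R} \times (H^{\alpha}_{\rm per, even} \cap X_0) \to L^2_{\rm even} \cap X_0
\end{equation*}
by $\Upsilon(\omega, a, \phi) := D^{\alpha}\phi + \omega \phi - 2\Pi_0(\phi^3 + 3a\phi^2)$. By hypothesis, $\Upsilon(\omega_0, a_0, \phi_0) = 0$, and $\Upsilon$ is smooth since $H^{\alpha}_{\rm per}$ is a Banach algebra for $\alpha > 1/2$ (so the cubic and quadratic nonlinearities map $H^{\alpha}_{\rm per,even} \cap X_0$ boundedly into $L^2_{\rm even}$, and $\Pi_0$ restores zero mean).

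Next, I would compute the Fr\'echet derivative with respect to $\phi$ at $(\omega_0, a_0, \phi_0)$. Using the identification (\ref{operator-tilde}) of $\mathcal{L}$ with $D^{\alpha} + \omega - 6\phi^2 - 12a\phi$, one obtains
\begin{equation*}
D_\phi \Upsilon(\omega_0, a_0, \phi_0) h = D^{\alpha} h + \omega_0 h - \Pi_0\bigl(6\phi_0^2 h + 12 a_0 \phi_0 h\bigr) = \mathcal{L}\big|_{X_0} h,
\end{equation*}
acting between $H^{\alpha}_{\rm per, even} \cap X_0$ and $L^2_{\rm even} \cap X_0$.

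The main point is to verify that this operator is a bijection on the even subspace. By Lemma \ref{lemma-characterization-even}, $z(\mathcal{L}|_{X_0}) = 1$. The translational symmetry of the stationary equation (\ref{ode-stat-phi}) yields $\partial_x \phi_0 \in \mathrm{Ker}(\mathcal{L})$, and since $\partial_x \phi_0$ has zero mean it lies in $\mathrm{Ker}(\mathcal{L}|_{X_0})$, hence must span it. But $\phi_0$ is even, so $\partial_x \phi_0$ is odd, and consequently $\mathrm{Ker}(\mathcal{L}|_{X_0}) \cap L^2_{\rm even} = \{0\}$. Since $\mathcal{L}|_{X_0}$ is self-adjoint on $L^2 \cap X_0$, leaves the parity decomposition invariant (the potential $-6\phi_0^2 - 12 a_0 \phi_0$ is even), and has compact resolvent thanks to the compact embedding $H^{\alpha}_{\rm per, even} \cap X_0 \hookrightarrow L^2_{\rm even} \cap X_0$ for $\alpha > 1/2$, the triviality of the kernel on the even subspace promotes to invertibility with bounded inverse.

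With the derivative invertible, the implicit function theorem produces an open neighborhood $\mathcal{O} \subset \mathbb{R}^2$ of $(\omega_0, a_0)$ and a $C^1$ map $(\omega, a) \mapsto \phi(\cdot; \omega, a) \in H^{\alpha}_{\rm per, even} \cap X_0$ satisfying $\Upsilon(\omega, a, \phi(\cdot; \omega, a)) = 0$ with $\phi(\cdot; \omega_0, a_0) = \phi_0$. The hard part is the kernel analysis just above: one must combine Lemma \ref{lemma-characterization-even} with the symmetry argument to rule out an even element of $\mathrm{Ker}(\mathcal{L}|_{X_0})$; the rest is a routine application of the two-parameter implicit function theorem, noting that the absence of the constant-mean-zero direction is exactly what makes the formulation with two free parameters $(\omega,a)$ well-posed despite the original one-parameter family of solutions being prescribed via $c$.
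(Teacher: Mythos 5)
Your proposal is correct and follows essentially the same route as the paper's proof: the same two-parameter map $\Upsilon(\omega,a,g) = D^{\alpha}g + \omega g - 2\Pi_0(g^3+3ag^2)$, the identification of $D_g\Upsilon$ with $\mathcal{L}|_{X_0}$, the use of $z(\mathcal{L}|_{X_0})=1$ from Lemma \ref{lemma-characterization-even} together with the oddness of $\partial_x\phi_0$ to get injectivity on the even zero-mean subspace, and surjectivity via self-adjointness and compact resolvent before invoking the implicit function theorem. Your kernel discussion is in fact slightly more explicit than the paper's, which simply asserts ${\rm Ker}(\mathcal{L}|_{X_0})={\rm span}\{\partial_x\phi_0\}$.
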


\begin{proof}
The proof repeats the arguments in the proof of Lemma \ref{prop-continuation}.
Let $\Upsilon:(-1,\infty)\times \mathbb{R} \times H_{\rm per,even}^{\alpha} \cap X_0 \to L^2_{\rm even}(\mathbb{T}) \cap X_0$ be defined by
\begin{equation}\label{Ups}
\Upsilon(\omega,a,g) := D^{\alpha} g + \omega g - 2 \Pi_0 \left( g^3 + 3 a g^2 \right).
	\end{equation}
By hypothesis we have $\Upsilon(\omega_0,a_0,\phi_0)=0$. Moreover, since $\Upsilon$ is smooth, its Fr\'echet derivative with respect to $g$ evaluated at $(\omega_0,a_0,\phi_0)$ is given by
\begin{equation}\label{operaIFT}
D_g \Upsilon (\omega_0,a_0,\phi_0) = D^{\alpha} + \omega_0 - 6 \Pi_0 \left( \phi_0^2 + 2 a_0 \phi_0 \right)
= D^{\alpha} + c_0 - 6 \Pi_0 \psi_0^2 = \mathcal{L}|_{X_0},
\end{equation}
where we have unfolded the previous transformation $\psi_0 = a_0 + \phi_0$ and $\omega_0 = c_0 - 6 a_0^2$
and used the same operator as in (\ref{operator-zero}) computed at $\psi_0$.

Since $z(\mathcal{L} |_{X_0}) = 1$ by Lemma \ref{lemma-characterization-even}
and ${\rm Ker}(\mathcal{L} |_{X_0}) = {\rm span}\{ \partial_x \phi_0 \}$ with $\partial_x \phi_0 \notin
H_{\rm per,even}^{\alpha} \cap X_0$, we conclude that $D_g \Upsilon (\omega_0,a_0,\phi_0)$ is one-to-one.
Next, we show that $D_g \Upsilon (\omega_0,a_0,\phi_0)$ is onto. Since $H_{\rm per,even}^{\alpha} \cap X_0$
is compactly embedded in $L^2_{\rm even}(\mathbb{T})\cap X_0$ if $\alpha > 1/2$, the operator $\mathcal{L}|_{X_0}$
has compact resolvent. In addition, $\mathcal{L}|_{X_0}$ is a self-adjoint operator,
hence its spectrum $\sigma(\mathcal{L}|_{X_0})$ consists of isolated eigenvalues with finite algebraic multiplicities.
Since $D_g \Upsilon (\omega_0,a_0,\phi_0)$ is one-to-one, it follows that $0$ is not in the spectrum
of $D_g \Upsilon (\omega_0,a_0,\phi_0)$, so that it is onto. Hence, $D_g \Upsilon (\omega_0,a_0,\phi_0)$ is a bounded linear operator
with a bounded inverse. Thus, since $\Upsilon$ and its derivative with respect to $g$  are smooth maps on their domains,
the result follows from the implicit function theorem.
\end{proof}

Recall that $\mathcal{L} = \tilde{\mathcal{L}}$ in (\ref{operator-tilde}).
Extension of relations (\ref{range-1}) and (\ref{range-2}) yields
\begin{eqnarray}
\label{range-1-tilde}
\tilde{\mathcal{L}} 1 = \omega - 12 a \phi - 6 \phi^2
\end{eqnarray}
and
\begin{eqnarray}
\label{range-2-tilde}
\tilde{\mathcal{L}} \phi = - \beta - 6a \phi^2 - 4 \phi^3,
\end{eqnarray}
where $\beta = \beta(\omega,a)$ is a $C^1$ function by Lemma \ref{lem-continuation-even}
and the representation (\ref{beta-expression}). Therefore, we also obtain
two more relations:
\begin{eqnarray}
\label{range-3-tilde}
\tilde{\mathcal{L}} \partial_{\omega} \phi = - \partial_{\omega} \beta - \phi,
\end{eqnarray}
and
\begin{eqnarray}
\label{range-4-tilde}
\tilde{\mathcal{L}} \partial_a \phi = - \partial_a \beta + 6 \phi^2.
\end{eqnarray}
These relations allow us to completely characterize ${\rm Ker}(\mathcal{L})$,
which can be two-dimensional if $1 \notin {\rm Range}(\mathcal{L})$ by Lemma \ref{lemma-index-even}.

\begin{lemma}
\label{lemma-kernel}
Assume $\alpha \in (\frac{1}{2},2]$ and $\phi \in H^{\alpha}_{\rm per, even} \cap X_0$ be a single-lobe
solution to the boundary-value problem (\ref{ode-bvp-phi}) with $\omega \in (-1,\infty)$
and $a \in \mathbb{R}$. Then, $z(\mathcal{L}) = 1$ if and only if
$s_0 := \omega - \partial_a \beta + 12 a \partial_{\omega} \beta \neq 0$.
\end{lemma}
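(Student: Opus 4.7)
The plan is to use Lemma~\ref{lemma-index-even}, which already asserts that $z(\mathcal{L}) = 1$ if and only if $1 \in {\rm Range}(\mathcal{L})$ and $z(\mathcal{L}) = 2$ if and only if $1 \notin {\rm Range}(\mathcal{L})$. Thus it suffices to prove the Fredholm alternative at the datum $1$: I want to exhibit an explicit preimage of $s_0 \cdot 1$ under $\tilde{\mathcal{L}}$ built from identities (\ref{range-1-tilde}), (\ref{range-3-tilde}), (\ref{range-4-tilde}), and to dispose of the case $s_0 = 0$ via the self-adjoint orthogonality ${\rm Range}(\mathcal{L}) = {\rm Ker}(\mathcal{L})^{\perp}$.

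First, I would differentiate the stationary equation (\ref{ode-stat-phi}) with respect to $\omega$ and $a$ (this is legitimate by the $C^1$ smoothness of $(\omega,a)\mapsto \phi(\cdot;\omega,a)$ from Lemma~\ref{lem-continuation-even}) to recover (\ref{range-3-tilde}) and (\ref{range-4-tilde}). Then I would form the linear combination $\tilde{\mathcal{L}} 1 - 12 a \tilde{\mathcal{L}} \partial_{\omega} \phi + \tilde{\mathcal{L}} \partial_a \phi$. The terms proportional to $\phi$ and $\phi^2$ cancel pairwise, producing
\begin{equation*}
\tilde{\mathcal{L}}\bigl(1 - 12 a \partial_{\omega}\phi + \partial_a \phi \bigr) = \omega + 12 a \partial_{\omega} \beta - \partial_a \beta = s_0.
\end{equation*}
If $s_0 \neq 0$, then $1 = \tilde{\mathcal{L}}\bigl(s_0^{-1}(1 - 12 a \partial_{\omega}\phi + \partial_a \phi)\bigr)$, so $1 \in {\rm Range}(\mathcal{L})$ and Lemma~\ref{lemma-index-even} delivers $z(\mathcal{L}) = 1$.

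For the converse, suppose $s_0 = 0$ and set $g := 1 - 12 a \partial_{\omega}\phi + \partial_a \phi$. The identity above shows $g \in {\rm Ker}(\mathcal{L})$. Because Lemma~\ref{lem-continuation-even} maps into $H^{\alpha}_{\rm per, even} \cap X_0$ throughout the neighborhood, the mean-value constraint $\int_{-\pi}^{\pi} \phi(x;\omega,a)\,dx = 0$ persists under the parameters, and differentiation yields $\int_{-\pi}^{\pi} \partial_{\omega}\phi\,dx = \int_{-\pi}^{\pi} \partial_a \phi\,dx = 0$. Hence $\langle g, 1 \rangle = 2\pi \neq 0$, so $1$ is not orthogonal to ${\rm Ker}(\mathcal{L})$. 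Since $\mathcal{L}$ is self-adjoint with compact resolvent, ${\rm Range}(\mathcal{L}) = {\rm Ker}(\mathcal{L})^{\perp}$, and we conclude $1 \notin {\rm Range}(\mathcal{L})$, whence $z(\mathcal{L}) = 2$ by Lemma~\ref{lemma-index-even}.

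The main obstacle is really only the algebraic one of spotting the combination $1 - 12 a \partial_{\omega}\phi + \partial_a \phi$ that zeroes out $\phi$ and $\phi^2$ simultaneously; once it is found, both directions are immediate. A minor point to verify is that this candidate is genuinely a nonzero element of ${\rm Ker}(\mathcal{L})$ when $s_0 = 0$, which is secured automatically by the zero-mean property from Lemma~\ref{lem-continuation-even} giving $\langle g,1\rangle = 2\pi$.
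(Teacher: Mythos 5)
Your proof is correct and follows essentially the same route as the paper: both hinge on the combination $1+\partial_a\phi-12a\,\partial_\omega\phi$ that eliminates $\phi$ and $\phi^2$ from (\ref{range-1-tilde}), (\ref{range-3-tilde}), (\ref{range-4-tilde}) to give (\ref{range-kernel}), and then invoke Lemma~\ref{lemma-index-even}. Your treatment of the case $s_0=0$ is in fact slightly more complete than the paper's, which merely asserts the new kernel element without checking it is nonzero; your observation that $\partial_\omega\phi,\partial_a\phi\in X_0$ forces $\langle g,1\rangle=2\pi\neq 0$ supplies that detail and also directly shows $1\notin{\rm Range}(\mathcal{L})$.
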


\begin{proof}
Eliminating $\phi$ and $\phi^2$ from (\ref{range-1-tilde}), (\ref{range-3-tilde}), and (\ref{range-4-tilde}) yields
\begin{equation}
\label{range-kernel}
\tilde{\mathcal{L}} \left( 1 + \partial_a \phi - 12 a \partial_{\omega} \phi \right) =
\omega - \partial_a \beta + 12 a \partial_{\omega} \beta =: s_0.
\end{equation}
Recall that $\tilde{\mathcal{L}} = \mathcal{L}$ in (\ref{operator-tilde}).
If $s_0 \neq 0$, then $1 \in {\rm Range}(\mathcal{L})$, so that $z(\mathcal{L}) = 1$ holds by Lemma \ref{lemma-index-even}.
If $s_0 = 0$, then $1 + \partial_a \phi - 12 a \partial_{\omega} \phi \in {\rm Ker}(\mathcal{L})$
in addition to $\partial_x \phi \in {\rm Ker}(\mathcal{L})$.
\end{proof}

We are now ready to provide the criterion for spectral stability of the even periodic waves. This result is
given by the following theorem.

\begin{theorem}
	\label{theorem-stability-even}
Assume $\alpha \in (\frac{1}{2},2]$ and $\phi \in H^{\alpha}_{\rm per, even} \cap X_0$ be a single-lobe
solution to the boundary-value problem (\ref{ode-bvp-phi}) with $\omega \in (-1,\infty)$
and $a \in \mathbb{R}$. The periodic wave is spectrally stable if and only if
\begin{equation}
\frac{\partial}{\partial \omega} \| \phi \|_{L^2}^2 \geq 0,
\label{stability-constraints-even}
\end{equation}
independently of either $z(\mathcal{L}) = 1$ or $z(\mathcal{L}) = 2$.
\end{theorem}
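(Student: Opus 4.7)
The plan is to reduce the stability analysis to a single-constraint problem on the zero-mean subspace $X_0$, where Lemma \ref{lemma-characterization-even} provides the uniform index count $n(\mathcal{L}|_{X_0}) = 1$ and $z(\mathcal{L}|_{X_0}) = 1$, regardless of whether $z(\mathcal{L}) = 1$ or $z(\mathcal{L}) = 2$. This sidesteps the split between $1 \in \mathrm{Range}(\mathcal{L})$ and $1 \notin \mathrm{Range}(\mathcal{L})$ that would otherwise force a case analysis analogous to what was carried out for the odd wave in Theorem \ref{theorem-stability}.

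The first step is to observe that since $\psi = a + \phi$ with $\phi \in X_0$, we have $\{1,\psi\}^{\perp} = X_0 \cap \{\phi\}^{\perp}$, so $\mathcal{L}|_{\{1,\psi\}^{\perp}}$ coincides with the compression $(\mathcal{L}|_{X_0})\big|_{\{\phi\}^{\perp}}$. By the standard Hamiltonian spectral-stability theory (see \cite{haragus,Pel}), the wave is spectrally stable when $n(\mathcal{L}|_{\{1,\psi\}^{\perp}}) = 0$ and spectrally unstable with exactly one real positive eigenvalue of $\partial_x \mathcal{L}$ when $n(\mathcal{L}|_{\{1,\psi\}^{\perp}}) = 1$. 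To invoke the Pontryagin-type reduction of Theorem 4.1 in \cite{Pel-book} with the single constraint $\phi$, I would first verify $\phi \in \mathrm{Range}(\mathcal{L}|_{X_0})$: this is immediate since $\mathrm{Ker}(\mathcal{L}|_{X_0}) = \mathrm{span}(\partial_x \phi)$ is odd while $\phi$ is even.

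The key computation is to identify $(\mathcal{L}|_{X_0})^{-1}\phi$ explicitly using (\ref{range-3-tilde}), $\tilde{\mathcal{L}} \partial_\omega \phi = -\phi - \partial_\omega \beta$, from Lemma \ref{lem-continuation-even}. Projecting this identity onto $X_0$ kills the constant $\partial_\omega \beta$, giving $(\mathcal{L}|_{X_0})(-\partial_\omega \phi) = \phi$. Since $\partial_\omega \phi$ is even and hence orthogonal to $\mathrm{Ker}(\mathcal{L}|_{X_0})$, the canonical inverse is $(\mathcal{L}|_{X_0})^{-1} \phi = -\partial_\omega \phi$, whence
$$ \langle (\mathcal{L}|_{X_0})^{-1}\phi, \phi \rangle = -\frac{1}{2} \frac{\partial}{\partial \omega} \|\phi\|_{L^2}^2. $$

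Applying Theorem 4.1 in \cite{Pel-book} with this $1 \times 1$ scalar then yields $n(\mathcal{L}|_{\{1,\psi\}^{\perp}}) = 1 - n_0 - z_0$, where $(n_0,z_0)$ is determined by the sign of the expression above: $(n_0,z_0) = (1,0)$, $(0,1)$, $(0,0)$ in the cases $\partial_\omega \|\phi\|_{L^2}^2 > 0$, $= 0$, $< 0$, respectively. This gives $n(\mathcal{L}|_{\{1,\psi\}^{\perp}}) = 0$ precisely when $\partial_\omega \|\phi\|_{L^2}^2 \geq 0$ and $n = 1$ otherwise, proving the equivalence. The only conceptual obstacle is the $z(\mathcal{L}) = 2$ case, which a priori would force a modified Pontryagin count with a nonzero $z_\infty$ on the full space; working in $X_0$, where $z(\mathcal{L}|_{X_0}) = 1$ uniformly, eliminates this complication entirely and is what makes the criterion independent of the kernel dimension of $\mathcal{L}$.
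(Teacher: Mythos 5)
Your argument is correct, and it takes a genuinely different route from the paper's proof. The paper works on the full space $L^2(\mathbb{T})$ with the two constraints $\{1,\psi\}$ and the $2\times 2$ matrix $D(0)$; this forces it to (i) compute $\sigma_0 = \langle \mathcal{L}^{-1}1,1\rangle = 2\pi/s_0$ and invoke Remark \ref{remark-sigma-0} to fix its sign, (ii) evaluate the off-diagonal entries $\langle \mathcal{L}^{-1}1,\psi\rangle = -\sigma_0\partial_{\omega}\beta$ via the auxiliary identity (\ref{tech-equality-1}), so that ${\rm det}\, D(0) = -\sigma_0 \langle \phi,\partial_{\omega}\phi\rangle$, and (iii) rerun the count with $z_{\infty}=1$ in the separate case $1\notin{\rm Range}(\mathcal{L})$. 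You instead pass to $X_0$ first, use the identification $\{1,\psi\}^{\perp} = X_0\cap\{\phi\}^{\perp}$ (which holds because $\psi = a+\phi$ and which makes the two compressions literally the same operator, since the projection onto this subspace absorbs $\Pi_0$), and then apply the index theorem with the \emph{single} constraint $\phi$ relative to $\mathcal{L}|_{X_0}$, where Lemma \ref{lemma-characterization-even} gives $n(\mathcal{L}|_{X_0}) = z(\mathcal{L}|_{X_0}) = 1$ uniformly in $z(\mathcal{L})$. The one scalar you need, $\langle (\mathcal{L}|_{X_0})^{-1}\phi,\phi\rangle = -\tfrac{1}{2}\partial_{\omega}\|\phi\|_{L^2}^2$, comes from projecting (\ref{range-3-tilde}) by $\Pi_0$, which is legitimate because Lemma \ref{lem-continuation-even} keeps $\phi(\cdot;\omega,a)$ in $X_0$ so that $\partial_{\omega}\phi\in X_0$, and because ${\rm Ker}(\mathcal{L}|_{X_0}) = {\rm span}(\partial_x\phi)$ is odd while $\partial_{\omega}\phi$ is even, so $-\partial_{\omega}\phi$ is the canonical preimage. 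The resulting count $n(\mathcal{L}|_{\{1,\psi\}^{\perp}}) = 1 - n_0 - z_0$ then reproduces the stability/instability dichotomy exactly as in the paper, with the advantage that the independence of the criterion from $z(\mathcal{L})$ is built in rather than checked case by case; what you give up is the explicit information about $\sigma_0$ and $s_0$ that the paper's route produces as a by-product and reuses elsewhere (e.g.\ in the discussion of the fold bifurcation).
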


\begin{proof}
We proceed similarly to the proof of Theorem \ref{theorem-stability}.
If $1 \in {\rm Range}(\mathcal{L})$, we use (\ref{range-kernel}) and compute
$$
\sigma_0 := \langle \mathcal{L}^{-1} 1, 1 \rangle = \frac{2\pi}{s_0},
$$
where $s_0 \neq 0$ by Lemma \ref{lemma-kernel}. For the even periodic wave, we have $\sigma_0 > 0$ (see Remark \ref{remark-sigma-0}),
so that $s_0 > 0$. Eliminating constant term from (\ref{range-3-tilde}) and (\ref{range-kernel}) yields
\begin{equation}
\label{range-phi}
\tilde{\mathcal{L}} \left[ \partial_{\omega} \phi + s_0^{-1} \partial_{\omega} \beta
\left( 1 + \partial_a \phi - 12 a \partial_{\omega} \phi \right)
\right] = - \phi,
\end{equation}
By projecting (\ref{range-3-tilde}) to $\partial_a \phi$ and (\ref{range-4-tilde}) to $\partial_{\omega} \phi$,
it is easy to verify that
\begin{equation}
\label{tech-equality-1}
6 \langle \phi^2, \partial_{\omega} \phi \rangle + \langle \phi, \partial_a \phi \rangle = 0.
\end{equation}
Using (\ref{range-phi}) yields
\begin{eqnarray*}
\langle \mathcal{L}^{-1} 1, \phi \rangle = \langle \mathcal{L}^{-1} \phi, 1 \rangle = - \sigma_0 \partial_{\omega} \beta,
\end{eqnarray*}
where we have used that
$$
2 \pi \partial_{\omega} \beta = 6 \langle \phi^2, \partial_{\omega} \phi \rangle + 12 a \langle \phi, \partial_{\omega} \phi \rangle
= - \langle \phi, \partial_a \phi \rangle + 12 a \langle \phi, \partial_{\omega} \phi \rangle,
$$
which follows from (\ref{beta-expression}) and (\ref{tech-equality-1}). Finally, we obtain from (\ref{range-phi}) and (\ref{tech-equality-1})
that
\begin{eqnarray*}
\langle \mathcal{L}^{-1} \phi, \phi \rangle = - \langle \phi, \partial_{\omega} \phi \rangle + \sigma_0 \left( \partial_{\omega} \beta \right)^2.
\end{eqnarray*}
By Theorem 4.1 in \cite{Pel-book}, we have the following identities:
	\begin{equation}\label{identnegLL-even}
	\left\{ \begin{array}{l}
	n(\mathcal{L} \big|_{\{1,\psi\}^{\bot}}) = n(\mathcal{L}) - n_0 - z_0, \\
	z(\mathcal{L} \big|_{\{1,\psi\}^{\bot}}) = z(\mathcal{L}) + z_0,
	\end{array} \right.
	\end{equation}
where $n_0$ and $z_0$ are the numbers of negative and zero eigenvalues of $D(0)$
in the proof of Theorem \ref{theorem-stability}. Since
$$
{\rm det} D(0) = -\sigma_0 \langle \phi, \partial_{\omega} \phi \rangle.
$$
and $\sigma_0 > 0$, we have $n_0 + z_0 = 1$ if the condition (\ref{stability-constraints-even}) is satisfied
and $n_0 + z_0 = 0$ if it is not satisfied.
Since $n(\mathcal{L}) = 1$ and $z(\mathcal{L}) = 1$ by Lemmas \ref{lemma-index-even} and \ref{lemma-kernel},
the count (\ref{identnegLL-even}) implies $n(\mathcal{L} \big|_{\{1,\psi\}^{\bot}}) = 0$
if the condition (\ref{stability-constraints-even}) is satisfied and
$n(\mathcal{L} \big|_{\{1,\psi\}^{\bot}}) = 1$ if it is not satisfied.
This gives the assertion of the theorem if  $1 \in {\rm Range}(\mathcal{L})$.

If $1 \notin {\rm Range}(\mathcal{L})$, then $z(\mathcal{L}) = 2$ and $s_0 = 0$.
In this case, the count (\ref{identnegLL-even}) should be adjusted as
	\begin{equation}\label{identnegLL-even-singular}
	\left\{ \begin{array}{l}
	n(\mathcal{L} \big|_{\{1,\psi\}^{\bot}}) = n(\mathcal{L}) - n_0 - z_0, \\
	z(\mathcal{L} \big|_{\{1,\psi\}^{\bot}}) = z(\mathcal{L}) + z_0 - z_{\infty},
	\end{array} \right.
	\end{equation}
where $z_{\infty} = 1$. At the same time, $n_0 + z_0 = 1$ if and only if the same
condition (\ref{stability-constraints-even}) is satisfied and $n_0 + z_0 = 0$ if it is not satisfied.
Hence the stability conclusion remains unchanged if $1 \notin {\rm Range}(\mathcal{L})$.
\end{proof}

\begin{remark}
	\label{remark-slope}
The momentum (\ref{Fu}) computed at the even periodic wave with the profile $\psi$ and the decomposition $\psi = a + \phi$ is given by
$$
F(\psi) = F(\phi) + \pi a^2.
$$
If $\omega$ and $a$ are independent parameters, it is true that
\begin{equation}
\label{slope-cond}
\frac{\partial}{\partial \omega} F(\phi) = \frac{\partial}{\partial \omega} F(\psi),
\end{equation}
however, this spectral stability criterion is not derived from the dependence
of the momentum $F(\psi)$ from the original wave speed $c$. In addition, 
if $\psi$ satisfies the stationary equation (\ref{ode-stat}) with $b = 0$, 
then $a$ depends on $\omega$, therefore, the dependence of $F(\psi)$ versus $\omega$ 
does not generally provide information about the slope condition (\ref{slope-cond}).
\end{remark}

\section{Examples of even periodic waves}

Here we present three examples of the even periodic waves obtained in Theorem \ref{theorem-existence-even}. These examples mimic the corresponding examples of the odd periodic waves. 

\subsection{Stokes expansion of small-amplitude waves}

Stokes expansion gives again a direct way to illustrate 
small-amplitude periodic waves bifurcating from the constant solutions at  $c = \frac{1}{2}$. In order to eliminate the constant wave, we set
\begin{equation}
\label{Stokes-expansion}
\psi(x) = \frac{\sqrt{c}}{\sqrt{2}} + \varphi(x),
\end{equation}
where $\varphi$ is not required to satisfy the zero-mean property.
The stationary equation (\ref{ode-stat}) with $b = 0$ is written in the equivalent form:
\begin{equation}
\label{ode-stat-stokes}
D^{\alpha} \varphi - 2c \varphi = 2 \varphi^3 + 3  \sqrt{2c} \varphi^2.
\end{equation}
By using the Stokes expansion in terms of small amplitude $A$:
\begin{equation}
\label{Stokes-even}
\varphi(x) = A \varphi_1(x) + A^2 \varphi_2(x) + A^3 \varphi_3(x) + \mathcal{O}(A^4), \quad
2c = 1 + A^2 \gamma_2 + \mathcal{O}(A^4),
\end{equation}
we obtain recursively: $\varphi_1(x) = \cos(x)$,
$$
\varphi_2(x) = -\frac{3}{2} + \frac{3}{2(2^{\alpha}-1)} \cos(2x),
$$
$$
\varphi_3(x) = \frac{1}{2(3^{\alpha} - 1)} \left[ 1 + \frac{9}{2^{\alpha}-1} \right] \cos(3x),
$$
and
$$
\gamma_2 = \frac{15}{2} - \frac{9}{2(2^{\alpha}-1)}.
$$
It follows that $\gamma_2 = 0$ if and only if $2^{\alpha} = \frac{8}{5}$, which is true at
\begin{equation}
\label{alpha-0}
\alpha_0 := \frac{\log 8 - \log 5}{\log 2} \approx 0.6781.
\end{equation}
The following proposition summarizes properties of the small-amplitude periodic waves.

\begin{proposition}
Let $\alpha_0$ be given by (\ref{alpha-0}). For each $\alpha \in \left(\alpha_0,2\right]$,
there exists $c_0 > \frac{1}{2}$ such that the even periodic wave exists for $c \in \left(\frac{1}{2},c_0\right)$
with $n(\mathcal{L}) = 1$, $z(\mathcal{L}) = 1$ and is spectrally stable.
For each $\alpha \in \left(\frac{1}{2},\alpha_0 \right)$, there exists $c_0 < \frac{1}{2}$
such that the even periodic wave exists for $c \in \left(c_0,\frac{1}{2}\right)$
with $n(\mathcal{L}) = 2$, $z(\mathcal{L}) = 1$, and is spectrally stable.
\label{prop-Stokes-even}
\end{proposition}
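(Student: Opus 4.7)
The plan is first to justify the Stokes expansion (\ref{Stokes-expansion})–(\ref{Stokes-even}) via the implicit function theorem (or equivalently a Lyapunov--Schmidt reduction) in $H^{\alpha}_{\rm per, even}$, with the coefficients $\varphi_k$ and $\gamma_2$ determined inductively by Fredholm solvability at each order in $A$. Since $\gamma_2 = 0$ exactly at $\alpha = \alpha_0$ and changes sign there, the relation $2c = 1 + A^2 \gamma_2 + O(A^4)$ immediately produces the two regimes in the statement: a supercritical bifurcation with $c \in (\tfrac12, c_0)$ for some $c_0 > \tfrac12$ when $\alpha > \alpha_0$, and a subcritical bifurcation with $c \in (c_0, \tfrac12)$ for some $c_0 < \tfrac12$ when $\alpha < \alpha_0$.

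To determine $n(\mathcal{L})$ and $z(\mathcal{L})$, I would expand
$$
\mathcal{L} = \mathcal{L}_0 - 6 A \cos x + A^2 (-\gamma_2 - 6 \varphi_2 - 6 \cos^2 x) + O(A^3),
$$
around $\mathcal{L}_0 = D^{\alpha} - 1$, whose $L^2(\mathbb{T})$ spectrum consists of the simple eigenvalue $-1$ (constant eigenfunction), a double zero eigenvalue spanned by $\{\cos x, \sin x\}$, and strictly positive eigenvalues $|n|^{\alpha} - 1$ for $|n| \geq 2$. The nonzero simple eigenvalues persist with fixed sign for small $A$, so only the bifurcating pair matters. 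Translational symmetry pins the $\sin x$-direction to the kernel via $\partial_x \psi$, while the $\cos x$-direction splits off the kernel by nondegenerate perturbation inside $L^2_{\rm even}$. A second-order computation using the explicit $\varphi_2$ from (\ref{Stokes-even}) yields the clean identity $\lambda(A) = 2\gamma_2 A^2 + O(A^4)$. Hence $n(\mathcal{L}) = 1$ and $z(\mathcal{L}) = 1$ when $\alpha > \alpha_0$, whereas $n(\mathcal{L}) = 2$ and $z(\mathcal{L}) = 1$ when $\alpha < \alpha_0$.

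For spectral stability I pass to the $(\omega, a)$-parametrization of Lemma \ref{lem-continuation-even}. From $a = \sqrt{c/2} + O(A^2)$ and $\omega = c - 6 a^2 = -1 + A^2 \omega_2(a) + O(A^4)$ with $\omega_2(a) = \tfrac{3}{2} + \tfrac{18 a^2}{2^{\alpha} - 1} > 0$, one has $\omega \in (-1, \infty)$ for small $A$, and $\|\phi\|^2_{L^2} = \pi A^2 + O(A^4)$ at fixed $a$ yields $\partial_\omega \|\phi\|^2_{L^2} = \pi / \omega_2(a) + O(A^2) > 0$. For $\alpha > \alpha_0$, Theorem \ref{theorem-stability-even} then applies directly. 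For $\alpha < \alpha_0$, where $n(\mathcal{L}) = 2$ takes the wave outside the minimizer framework of Lemma \ref{lemma-index-even}, I would adapt the proof of Theorem \ref{theorem-stability-even}: a direct computation via (\ref{range-kernel}) gives $s_0 = \tfrac{5 \cdot 2^{\alpha} - 8}{2^{\alpha} + 2} + O(A^2) < 0$, so $\sigma_0 = 2\pi / s_0 < 0$; the identity $\det D(0) = -\tfrac{\sigma_0}{2} \partial_\omega \|\phi\|^2_{L^2}$ then gives $\det D(0) > 0$ and $\mathrm{tr}\, D(0) < 0$, forcing both eigenvalues of $D(0)$ to be negative. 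Hence $n_0 = 2$, $z_0 = 0$, and the count analogous to (\ref{identnegLL-even}) yields $n(\mathcal{L} |_{\{1, \phi\}^{\perp}}) = 2 - 2 = 0$, so spectral stability follows here as well.

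The hard part will be the second-order perturbation computation giving $\lambda(A) = 2\gamma_2 A^2 + O(A^4)$: this coincidence between the eigenvalue shift of the $\cos x$ mode and $2\gamma_2$ is essential for locating the Morse-index transition exactly at $\alpha_0$, and it requires carefully assembling the contributions from both $\mathcal{L}_1 u_1$ and $\mathcal{L}_2 \cos x$ in the $\cos x$ Fourier mode rather than following from symbolic algebra. The second delicate point is recertifying spectral stability in the subcritical regime $\alpha < \alpha_0$: because the small-amplitude wave is no longer the minimizer of Theorem \ref{theorem-existence-even}, both $n(\mathcal{L})$ and the sign of $\sigma_0$ must be computed independently of Lemmas \ref{lemma-index-even} and \ref{lemma-characterization-even}, and the stability count of Theorem \ref{theorem-stability-even} has to be reworked in the case $n(\mathcal{L}) = 2$ with $\sigma_0 < 0$.
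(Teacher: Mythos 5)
Your proposal is correct and follows essentially the same route as the paper: the Stokes expansion for existence, perturbation of $D^{\alpha}-1$ yielding the eigenvalue shift $\lambda = 2\gamma_2 A^2 + \mathcal{O}(A^4)$ of the $\cos x$-mode for the Morse index, and the slope condition $\frac{\partial}{\partial\omega}\|\phi\|^2_{L^2} > 0$ in the $(\omega,a)$-parametrization for spectral stability. The only differences are that you make explicit the eigenvalue count of $D(0)$ in the subcritical case $\alpha < \alpha_0$ (obtaining $n_0 = 2$ from $\sigma_0 = 2\pi/s_0 < 0$, $\det D(0) > 0$, ${\rm tr}\, D(0) < 0$), which the paper only asserts in a remark after Proposition \ref{prop-Stokes-even}, and that your computation of $\frac{\partial}{\partial\omega}\|\phi\|^2_{L^2}$ should be carried out in the two-parameter family of Lemma \ref{lem-continuation-even} at fixed $a$ rather than along the one-parameter family $b=0$ — the paper devotes Appendix A to checking that the partial and ordinary derivatives agree to leading order in $A$, and your formula $\omega_2(a) = \tfrac{3}{2} + \tfrac{18 a^2}{2^{\alpha}-1}$ is indeed the correct two-parameter coefficient.
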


\begin{proof}
The existence statement follows from the Stokes expansion (\ref{Stokes-even}) with small
wave amplitude $A$ since $\gamma_2 > 0$ for $\alpha > \alpha_0$ and $\gamma_2 < 0$ for $\alpha < \alpha_0$.

In order to compute $n(\mathcal{L})$ and $z(\mathcal{L})$, we substitute
(\ref{Stokes-expansion}) and (\ref{Stokes-even}) into (\ref{operator}) and obtain
\begin{eqnarray*}
\mathcal{L} = D^{\alpha} - 1 - A \cos(x) - A^2 \left[ \gamma_2 + 6 \varphi_2(x) - 6 \cos^2(x) \right] + \mathcal{O}(A^3).
\end{eqnarray*}
We solve the spectral problem $\mathcal{L} v = \lambda v$ perturbatively near the eigenvalue $\lambda = 0$
associated with the subspace of even functions in $L^2(\mathbb{T})$. Hence, we expand
$$
u = \cos(x) + A u_1(x) + A^2 u_2(x) + \mathcal{O}(A^3), \quad \lambda = A^2 \lambda_2 + \mathcal{O}(A^4),
$$
and obtain recursively: $u_1(x) = 2 \varphi_2(x)$ and $\lambda_2 = 2 \gamma_2$. Hence, $\lambda > 0$ if $\gamma_2 > 0$
and $\lambda < 0$ if $\gamma_2 < 0$. The zero eigenvalue associated with the subspace of odd functions
in $L^2(\mathbb{T})$ is preserved at zero for every $A$ due to $\partial_x \psi \in {\rm Ker}(\mathcal{L})$.
In addition, there exists a negative eigenvalue of $\mathcal{L}$ associated with the constant functions at $A = 0$.
Hence, we confirm that $n(\mathcal{L}) = 1$ for $\alpha > \alpha_0$ and $n(\mathcal{L}) = 2$
for $\alpha < \alpha_0$, whereas $z(\mathcal{L}) = 1$ for both $\alpha > \alpha_0$ and $\alpha < \alpha_0$.

In order to deduce the spectral stability conclusion, we use transformation $\psi(x) = a + \phi(x)$, where the zero-mean function $\phi$ satisfies the boundary-value problem (\ref{ode-bvp-phi}). Computing the mean value
$$
a := \frac{1}{2\pi} \int_{-\infty}^{\infty} \psi(x) dx = \frac{1}{2} + \frac{3}{8} \left[ 1 - \frac{3}{2^{\alpha}-1} \right] A^2 + \mathcal{O}(A^4)
$$
we obtain
$$
\omega := c - 6 a^2 = -1 + \frac{3}{2} \left[ 1 + \frac{3}{2^{\alpha}-1} \right] A^2 + \mathcal{O}(A^2)
$$
and
$$
\beta := c a - 2 a^3 = \frac{3}{2} A^2 + \mathcal{O}(A^4).
$$
No fold point occurs in the expansion of $\omega$ with respect to the Stokes amplitude $A$, in particular,
$$
\frac{d \omega}{d A^2} = \frac{3}{2} \left[ 1 + \frac{3}{2^{\alpha}-1} \right] + \mathcal{O}(A^2) > 0.
$$
Since $\| \phi \|_{L^2}^2 = \pi A^2 + \mathcal{O}(A^4)$, we have
\begin{equation}
\label{der-small-amplitude}
\frac{d}{d \omega} \| \phi \|^2_{L^2} = \frac{2\pi}{3} 
\frac{2^{\alpha} - 1}{2^{\alpha} + 2}  + \mathcal{O}(A^2) > 0.
\end{equation}
In Appendix A, we show that $\sigma_0 > 0$ for $\alpha > \alpha_0$ and $\sigma_0 < 0$ for $\alpha < \alpha_0$. In addition, we show that
$$ 
\frac{\partial}{\partial \omega} \| \phi \|^2_{L^2} = \frac{d}{d \omega} \| \phi \|^2_{L^2}  + \mathcal{O}(A^2) > 0.
$$
By Theorem \ref{theorem-stability-even}, the periodic waves are spectrally stable for small $A$ both for $\alpha > \alpha_0$ and
$\alpha < \alpha_0$.
\end{proof}

\begin{remark}
For $\alpha > \alpha_0$, the small-amplitude periodic wave in Proposition \ref{prop-Stokes-even} corresponds to the even periodic wave
in Theorem \ref{theorem-existence-even} with $n(\mathcal{L}) = 1$, $z(\mathcal{L}) = 1$, and $\sigma_0 > 0$.
However, for $\alpha < \alpha_0$, the small-amplitude periodic wave is not a minimizer of the constrained
variational problem (\ref{minBfunc-even}) in Theorem \ref{theorem-existence-even} because $n(\mathcal{L}) = 2$ and $\sigma_0 < 0$.
Nevertheless, spectral stability of the periodic wave with $n(\mathcal{L}) = 2$, $\sigma_0 < 0$, and the slope condition
(\ref{stability-constraints-even}) follows from the same 
computation as in the proof of Theorem \ref{theorem-stability-even}.
\end{remark}

\subsection{Local case with $\alpha = 2$}

In the case of the modified KdV equation ($\alpha = 2$), 
the stationary equation (\ref{ode-stat}) with $b = 0$ can be solved
in the space of even functions by using the Jacobian dnoidal function 
\cite{angulo1,DK}.
Let us recall the normalized solution $\psi_0(z) = {\rm dn}(z;k)$ of the
second-order differential equation
\begin{equation}
\label{normalized-dn}
\psi_0''(z) + (k^2 - 2) \psi_0(z) + 2 \psi_0(z)^3 = 0.
\end{equation}
Adopting an elementary scaling transformation yields the exact solution in the form:
\begin{equation}
\label{dnoidal}
\psi(x) = \frac{1}{\pi} K(k) {\rm dn}\left[\frac{1}{\pi} K(k) x; k\right]
\end{equation}
with
\begin{equation}
\label{dnoidal-speed}
c = \frac{1}{\pi^2} K(k)^2 (2 - k^2),
\end{equation}
where $K(k)$ is the complete elliptic integral of the first kind.
The following proposition summarizes properties of the even periodic waves for $\alpha = 2$.

\begin{proposition}
Fix $\alpha = 2$. The even periodic wave (\ref{dnoidal}) 
exists and is spectrally stable for every $c \in \left(\frac{1}{2},\infty \right)$.
Moreover, $n(\mathcal{L}) = 1$, $z(\mathcal{L}) = 1$, and $\sigma_0 > 0$ for every $c \in \left(\frac{1}{2},\infty \right)$.
\label{prop-dnoidal}
\end{proposition}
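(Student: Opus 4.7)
Proof proposal: The plan is to mirror Proposition \ref{prop-cnoidal}, working with the normalized operator $\mathcal{L}_0 = -\partial_z^2 + (2-k^2) - 6\,{\rm dn}^2(z;k)$ on $[-K(k), K(k)]$ and verifying two elliptic-function inequalities along the way. First I would check that $k \in (0,1) \mapsto c(k) = \pi^{-2} K(k)^2 (2-k^2)$ is a bijection onto $(\tfrac{1}{2}, \infty)$. The endpoint values come from $K(0) = \pi/2$ and $K(k) \to \infty$ as $k \to 1$, and monotonicity follows from the identity $\tfrac{k(1-k^2) \pi^2}{2 K(k)}\, \tfrac{dc}{dk} = p(k)$ with $p(k) := (2-k^2) E(k) - 2(1-k^2) K(k)$, where $p(0) = 0$ and a short simplification gives $k\, p'(k) = 3 k^2 [K(k) - E(k)] > 0$ by property (c).

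Second, I would determine $n(\mathcal{L})$, $z(\mathcal{L})$, and the sign of $\sigma_0$. Standard Lam\'e theory for $\mathcal{L}_0$ (see \cite{angulo1}) gives a simple negative ground state and a simple zero eigenvalue with eigenfunction $\psi_0'(z) = -k^2\, {\rm sn}(z;k)\, {\rm cn}(z;k)$, so $n(\mathcal{L}) = 1$ as in Lemma \ref{lemma-index-even}. Using $\mathcal{L}_0 \cdot 1 = (2-k^2) - 6\, {\rm dn}^2$ together with the identity $\mathcal{L}_0\, {\rm dn}^2 = 2(1-k^2) - 3(2-k^2)\, {\rm dn}^2$ (which follows from the first integral of (\ref{normalized-dn})), the ansatz $u = A + B\, {\rm dn}^2$ yields the explicit preimage
\[
\mathcal{L}_0^{-1} 1 = \frac{(2-k^2) - 2\, {\rm dn}^2(z;k)}{k^4},
\]
so $1 \in {\rm Range}(\mathcal{L})$ and Lemma \ref{lemma-index-even} forces $z(\mathcal{L}) = 1$. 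Integrating this preimage and using $\int_0^{K(k)} {\rm dn}^2\, dz = E(k)$, the sign of $\sigma_0 = \langle \mathcal{L}^{-1} 1, 1\rangle$ (up to a positive scaling factor from (\ref{dnoidal})) matches that of $g(k) := (2-k^2) K(k) - 2 E(k)$; since $g(0) = 0$ and $k(1-k^2)\, g'(k) = k^2[E(k) - (1-k^2) K(k)] > 0$ by (\ref{property-elliptic}), I obtain $\sigma_0 > 0$, consistent with Remark \ref{remark-sigma-0}.

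Third, to apply Theorem \ref{theorem-stability-even} I need the slope condition. The identity $\int_0^{K(k)} {\rm dn}(z;k)\, dz = \pi/2$, valid for every $k$, forces the mean value $a = (2\pi)^{-1} \int_{-\pi}^\pi \psi\, dx = \tfrac{1}{2}$ to be \emph{constant} along the entire family. Therefore $\omega = c - \tfrac{3}{2} \in (-1, \infty)$ lies in the admissible range, and because $a$ is frozen along the family the partial derivative of Lemma \ref{lem-continuation-even} reduces to a total derivative:
\[
\frac{\partial}{\partial \omega} \|\phi\|_{L^2}^2 = \frac{d}{dc} \|\psi\|_{L^2}^2 = \frac{2}{\pi}\, \frac{dk}{dc}\, \frac{d}{dk}[K(k) E(k)].
\]
A direct differentiation gives $\tfrac{d}{dk}[K(k) E(k)] = [E(k)^2 - (1-k^2) K(k)^2]/[k(1-k^2)]$, and positivity of the numerator follows from $\tfrac{d}{dk}[E^2 - (1-k^2) K^2] = (2/k)(E-K)^2 \geq 0$ together with vanishing at $k = 0$. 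Combined with $dc/dk > 0$ from the first step, this yields the strict slope condition (\ref{stability-constraints-even}), and Theorem \ref{theorem-stability-even} concludes spectral stability.

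The main obstacle is verifying the two elliptic-function inequalities $(2-k^2) K(k) > 2 E(k)$ and $E(k)^2 > (1-k^2) K(k)^2$ on $(0,1)$. Both are proven by the same strategy: the function vanishes at $k = 0$ and has positive derivative controlled by the classical bound (\ref{property-elliptic}). All remaining steps are algebraic or direct applications of Lemmas \ref{lemma-index-even}--\ref{lemma-kernel} and Theorem \ref{theorem-stability-even}.
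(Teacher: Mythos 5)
Your proposal is correct and follows essentially the same route as the paper: monotonicity of $c(k)$ via $(2-k^2)E - 2(1-k^2)K$, the Lam\'e spectral structure for $n(\mathcal{L})=z(\mathcal{L})=1$, the constant mean value $a=\tfrac12$ reducing the partial derivative in $\omega$ to a total derivative, positivity of $\tfrac{d}{dk}[K(k)E(k)]$ via $(E-K)^2$, and Theorem \ref{theorem-stability-even}. The only (harmless) variation is that you obtain $\mathcal{L}_0^{-1}1$ from the explicit ansatz $A+B\,{\rm dn}^2$ using the identity $\mathcal{L}_0\,{\rm dn}^2 = 2(1-k^2)-3(2-k^2)\,{\rm dn}^2$, whereas the paper assembles the same preimage from the explicit eigenfunctions $\varphi_0,\varphi_2$; both yield $\langle\mathcal{L}_0^{-1}1,1\rangle = \tfrac{2}{k^4}\left[(2-k^2)K(k)-2E(k)\right]>0$.
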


\begin{proof}
The mapping $(0,1) \ni k \mapsto c(k) \in \left(\frac{1}{2},\infty \right)$ is one-to-one and onto.
This follows from
\begin{equation}
\label{monotonicity-dn}
\frac{\pi^2}{2} \frac{dc}{dk} = \frac{K(k)}{k(1-k^2)} \left[ (2-k^2) E(k) - 2 (1-k^2) K(k) \right] > 0,
\end{equation}
where the latter inequality was proved in \cite{angulo1} (see also \cite{DK}). Indeed,
if
$$
f(k) := (2-k^2) E(k) - 2 (1-k^2) K(k),
$$
then $f(0) = 0$, whereas $f'(k) = 3k [K(k) - E(k)] > 0$ so that $f(k) > 0$ for $k \in (0,1)$.

The mean value of the periodic wave in (\ref{dnoidal}) is computed explicitly by
$$
a := \frac{1}{2\pi} \int_{-\pi}^{\pi} \psi(x) dx =
\frac{1}{\pi} \int_0^{K(k)} {\rm dn}(z;k) dz = \frac{1}{2}.
$$
Hence, the zero-mean function $\phi(x) := \psi(x) - a$ is
a solution to the boundary-value problem (\ref{ode-bvp-phi}) with
$$
\omega = c - \frac{3}{2}, \quad \beta = \frac{1}{2} \left( c - \frac{1}{2} \right).
$$
This gives the straight line dependence $\beta = \frac{1}{2} (\omega + 1)$ for the
periodic waves with the single-lobe profile. Furthermore, we can compute
$$
\| \phi \|_{L^2}^2 = \frac{2}{\pi} K(k) E(k) - \frac{\pi}{2},
$$
from which we verify that
$$
\langle \phi, \partial_k \phi \rangle = -\frac{1}{\pi k (1-k^2)} \left[ K(k)^2 (1-k^2) - E(k)^2 \right] > 0.
$$
The latter inequality is also proven directly by setting
$$
f(k) := K(k)^2 (1-k^2) - E(k)^2
$$
such that $f(0) = 0$ and $f'(k) = -2 k^{-1} [K(k)-E(k)]^2 < 0$ so that $f(k) < 0$ for $k \in (0,1)$.
By Theorem \ref{theorem-stability-even}, the even periodic wave (\ref{dnoidal}) with the speed (\ref{dnoidal-speed}) satisfying (\ref{monotonicity-dn}) is spectrally stable
for $c \in \left(\frac{1}{2},\infty\right)$. 

Other properties such as $1 \in {\rm Range}(\mathcal{L})$, $\sigma_0 > 0$, and $n(\mathcal{L}) = 1$ for every $k \in (0,1)$
can be confirmed by explicit computations. The normalized linearized operator is given by
\begin{equation}
\label{operator-dn}
\mathcal{L}_0 = -\partial_z^2 - 4 + 5 k^2 - 6 k^2 {\rm cn}(z;k)^2.
\end{equation}
Eigenvalues of $\mathcal{L}_0$ in (\ref{operator-dn}) are given by subtracting $3(1-k^2)$ from
eigenvalues of $\mathcal{L}_0$ in (\ref{operator-cn}). However, $\mathcal{L}_0$ in (\ref{operator-dn}) is
considered in space $L^2(-K(k),K(k))$ so that the eigenvalues $\lambda_1$ and $\lambda_2$ given below (\ref{operator-dn}) 
with the eigenfunctions in $L^2(-2K(k),2K(k))$ are not relevant. Hence, the first three
eigenvalues of $\mathcal{L}_0$ in (\ref{operator-dn}) are given by
\begin{eqnarray*}
& \lambda_0 = - 2 + k^2 - 2 \sqrt{1-k^2+k^4}, \quad & \varphi_0(z) = 1 + k^2 + \sqrt{1-k^2+k^4} - 3 k^2 {\rm sn}(z;k)^2,\\
& \lambda_1 = 0, \quad & \varphi_1(z) = {\rm sn}(z;k) {\rm cn}(z;k), \\
& \lambda_2 = -2 + k^2 + 2 \sqrt{1-k^2+k^4}, \quad & \varphi_2(z) = 1 + k^2 - \sqrt{1-k^2+k^4} - 3 k^2 {\rm sn}(z;k)^2.
\end{eqnarray*}
Eigenvalues and eigenvectors of the linearized operator $\mathcal{L}$ are obtained after
the same scaling transformation as in (\ref{dnoidal}). In agreement with Lemma \ref{lemma-index-even},
we have $n(\mathcal{L}) = 1$ and $z(\mathcal{L}) = 1$. The property $1 \in {\rm Range}(\mathcal{L})$
follows from the representation
$$
\frac{1}{2\sqrt{1-k^2+k^4}} \mathcal{L}_0 \left[ \frac{\lambda_2 \varphi_0 - \lambda_0 \varphi_2}{\lambda_0 \lambda_2} \right] = 1
\quad \mbox{\rm and} \quad \frac{1}{2\sqrt{1-k^2+k^4}} \left[ \varphi_0 - \varphi_2 \right] = 1.
$$
Direct computations yield
\begin{eqnarray*}
\langle \mathcal{L}_0^{-1} 1, 1 \rangle = \frac{\lambda_2 \langle \varphi_0, 1 \rangle
- \lambda_0 \langle \varphi_2, 1 \rangle}{2 \sqrt{1 - k^2 + k^4} \lambda_0 \lambda_2} = \frac{2}{k^4} \left[ (2-k^2) K(k) -2 E(k)\right] > 0,
\end{eqnarray*}
where the latter inequality is justified by assigning
$$
f(k) := (2-k^2) K(k) - 2E(k)
$$
with $f(0) = 0$ and $f'(k) = k (1-k^2)^{-1} [E(k) - (1-k^2) K(k)] > 0$ so that $f(k) > 0$ for $k \in (0,1)$.
\end{proof}

\begin{remark}
Explicit computations in the proof of Proposition \ref{prop-dnoidal} repeat computations in \cite{DK},
however, the expression for $\langle \mathcal{L}_0^{-1} 1, 1 \rangle$ was typed incorrectly in \cite{DK}. This stability conclusion agrees with the results in \cite{angulo1,DK}.
\end{remark}

\subsection{Numerical approximations}

Here we numerically compute solutions of the stationary equation
(\ref{ode-stat}) with $b = 0$ using Newton's method in the Fourier space.
The starting iteration is generated from the Stokes
expansion (\ref{Stokes-even}) and this solution
is uniquely continued in $c$ for all $c \in \left(\frac{1}{2},\infty \right)$ if $\alpha > \alpha_0$. 

Figure \ref{fig:alpha2} presents the periodic wave solutions for $\alpha = 2$. The top panel shows the profiles of $\psi$ for three different values of $c$. 
The bottom panels show the dependence of $F(\psi)$ versus $c$ (left)
and the dependence of $F(\phi)$ versus $\omega$ (right), where $\phi$ and $\omega$ was computed from the transformation $\phi(x) = \psi(x) - a$ 
and $\omega = c - 6 a^2$ with $a := \frac{1}{2\pi} \int_{-\pi}^{\pi} \psi(x) dx$. The even periodic wave 
with the single-lobe profile $\psi$ (red line) bifurcates at $c = \frac{1}{2}$ 
from the constant wave (grey line) shown on the bottom left. 
Since $F(\phi)$ is increasing in $\omega$ and $a = \frac{1}{2}$ is independent of $\omega$, the even periodic wave 
is stable by Theorem \ref{theorem-stability-even}. 

\begin{figure}[htpb]
	\centering
	\includegraphics[width=0.55\linewidth]{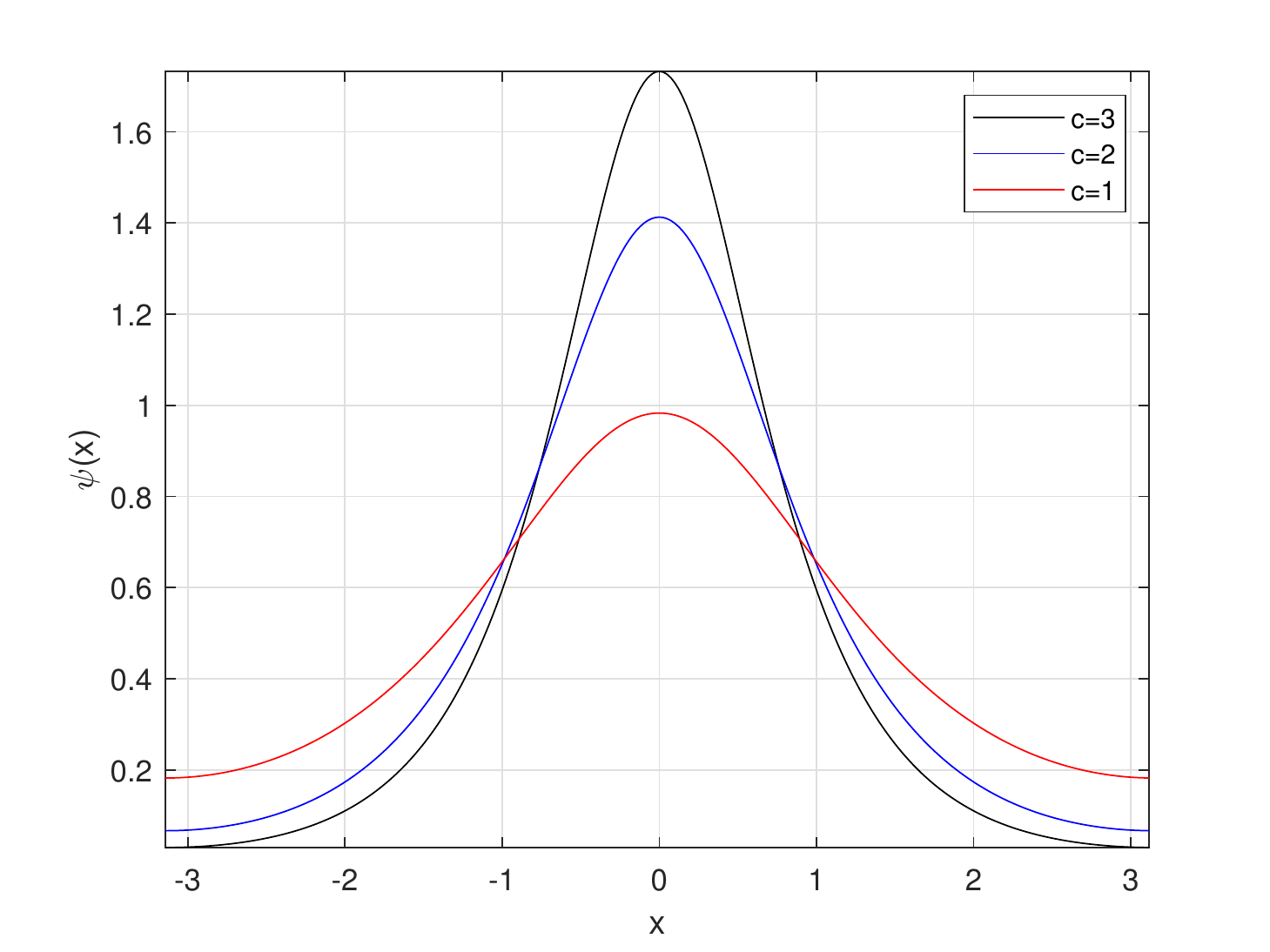} \\ 
	\includegraphics[width=0.45\linewidth]{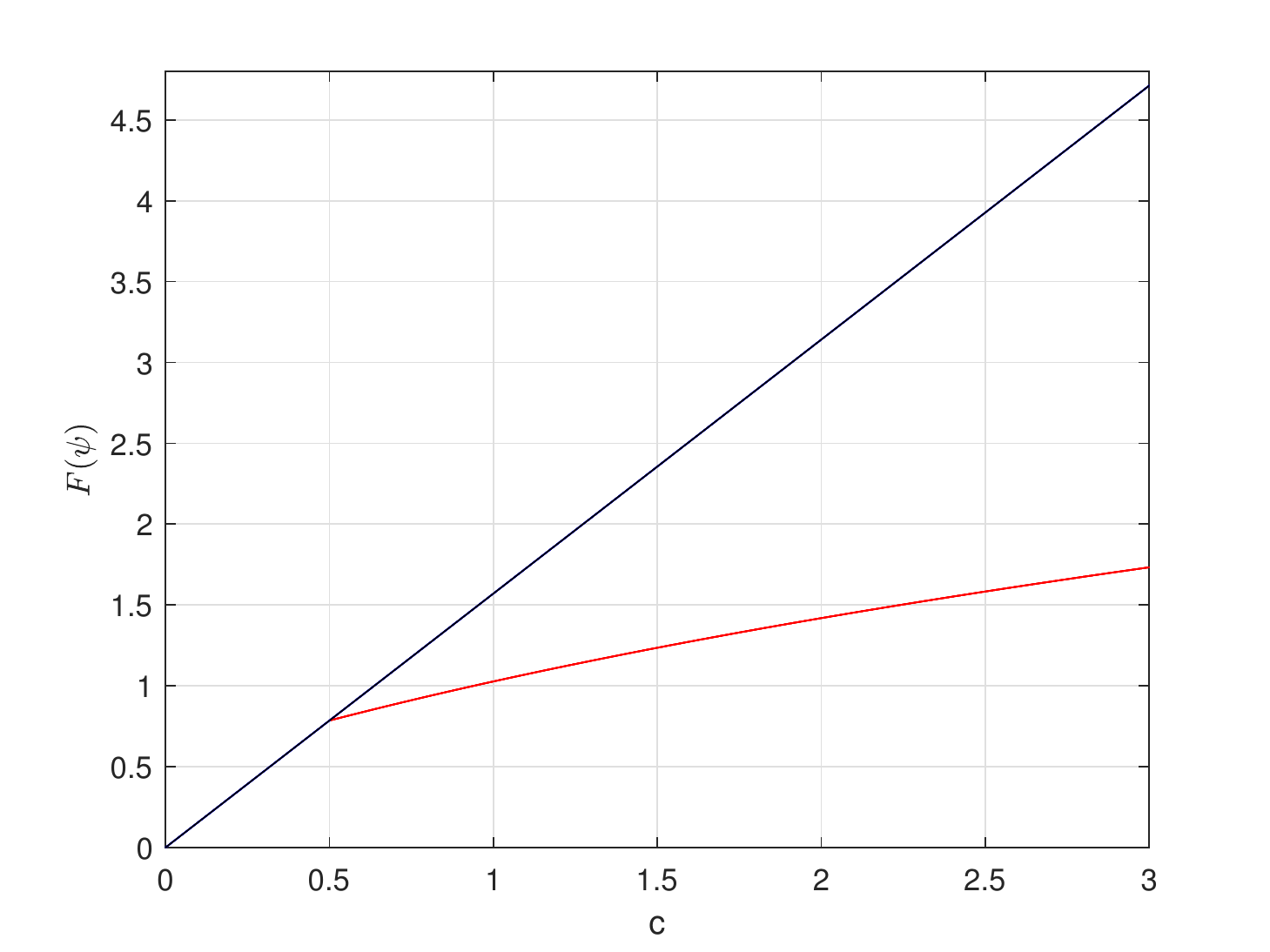}
	\includegraphics[width=0.45\linewidth]{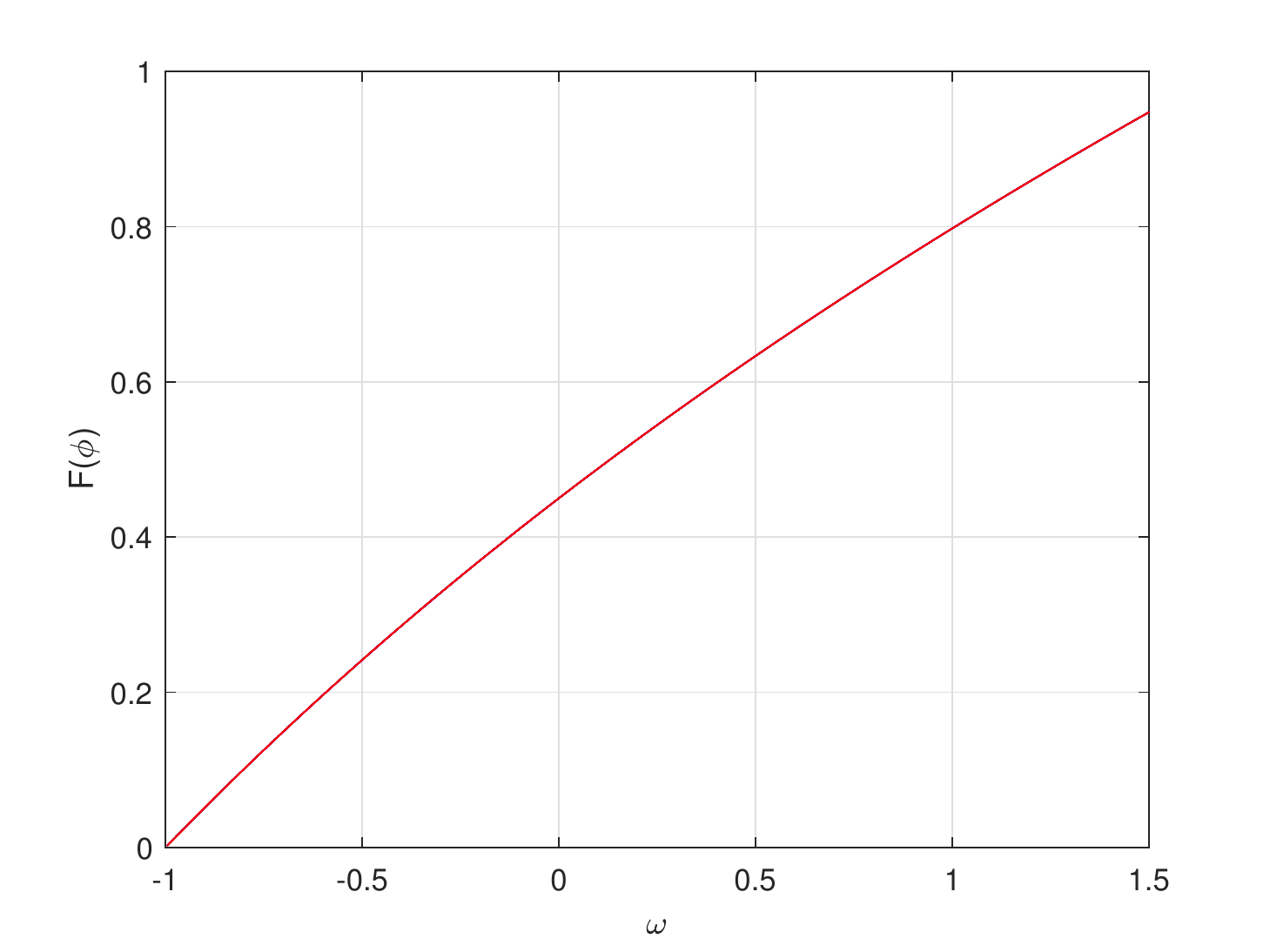}
	\caption{Periodic waves for $\alpha = 2$. Top: Profiles of $\psi$ for three different values of $c$. Bottom: Dependence of the momentum $F(\psi)$ versus $c$ (left) and $F(\phi)$ versus $\omega$ (right). }
	\label{fig:alpha2}
\end{figure} 


Figure \ref{fig:alpha1} presents similar results but for $\alpha = 1$. 
The periodic wave  (red line on the top right panel)
still bifurcates from the constant wave (grey line on the top right panel) to the right of the bifurcation point at $c = \frac{1}{2}$.
However, $a$ depends on $\omega$ for the even periodic wave, hence 
$$
\frac{d}{d \omega} F(\phi) = \frac{\partial}{\partial \omega} F(\phi) + \frac{da}{d\omega} \frac{\partial}{\partial a} F(\phi)
$$
by the chain rule. In the Stokes limit, we have shown in Appendix A 
that $\frac{\partial}{\partial a} F(\phi) = \mathcal{O}(A^2)$ for small $A$
so that 
$$
 \frac{\partial}{\partial \omega} F(\phi) = \frac{d}{d \omega} F(\phi) 
 + \mathcal{O}(A^2) > 0.
$$
However, a discrepancy between partial and ordinary derivatives of $F(\phi)$ in $\omega$ exists away from the Stokes limit. The additional bottom right panel on Fig. \ref{fig:alpha1} (compared to Fig. \ref{fig:alpha2}) shows the partial and ordinary derivatives on the same graph by the thin and thick lines respectively. 
Since $\frac{\partial}{\partial \omega} F(\phi)$ remains positive, the even periodic wave 
is stable by Theorem \ref{theorem-stability-even}. Since 
$F(\psi)$ for the even periodic wave is decreasing in $c$ 
towards the horizontal asymptote as $c \to \infty$, it is clear that the stability conclusion does not follow from the dependence of the momentum $F(\psi)$ versus the wave speed $c$ (see Remark \ref{remark-slope}).

\begin{figure}[htpb]
	\centering
	\includegraphics[width=0.45\linewidth]{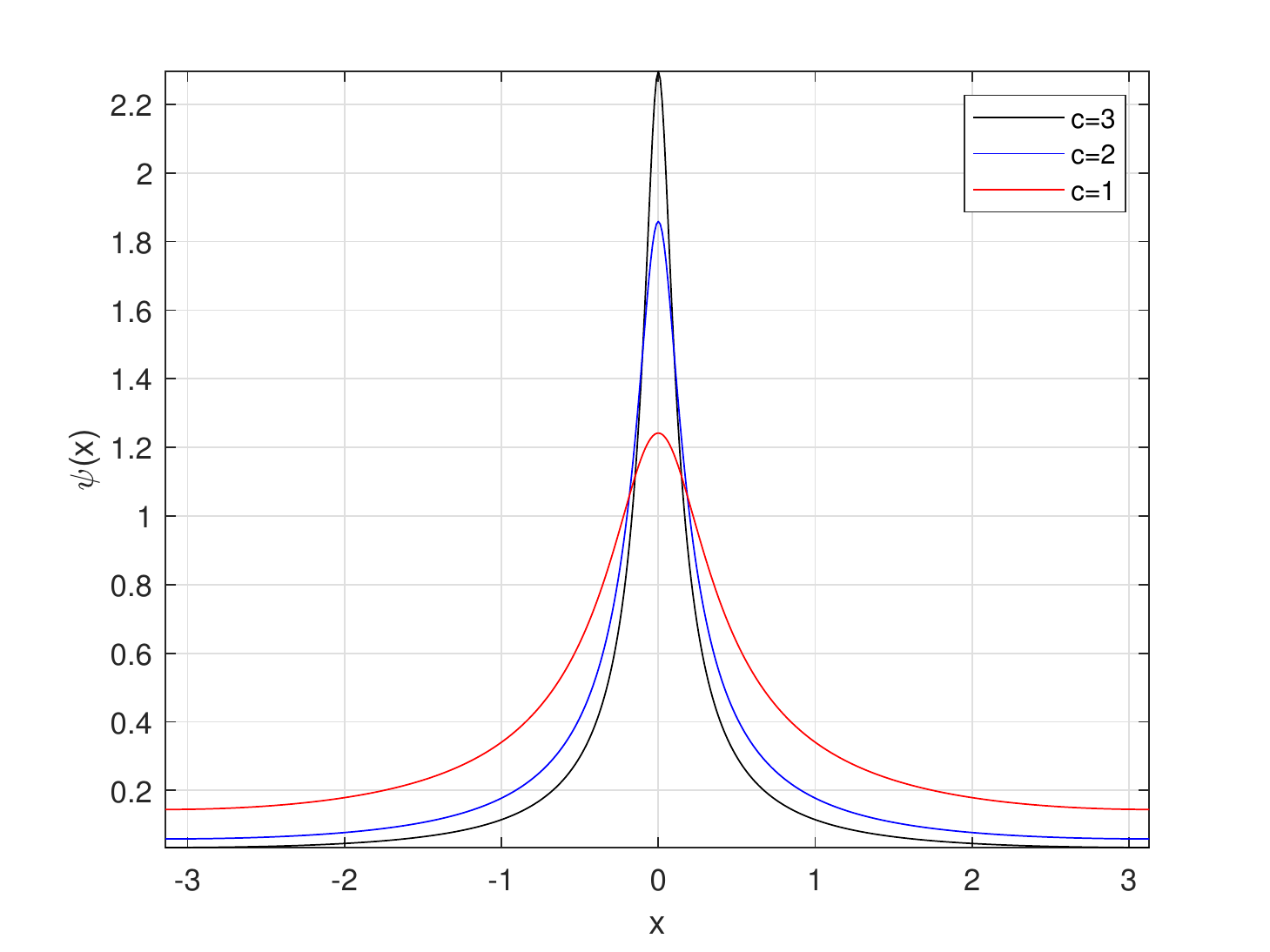}
	\includegraphics[width=0.45\linewidth]{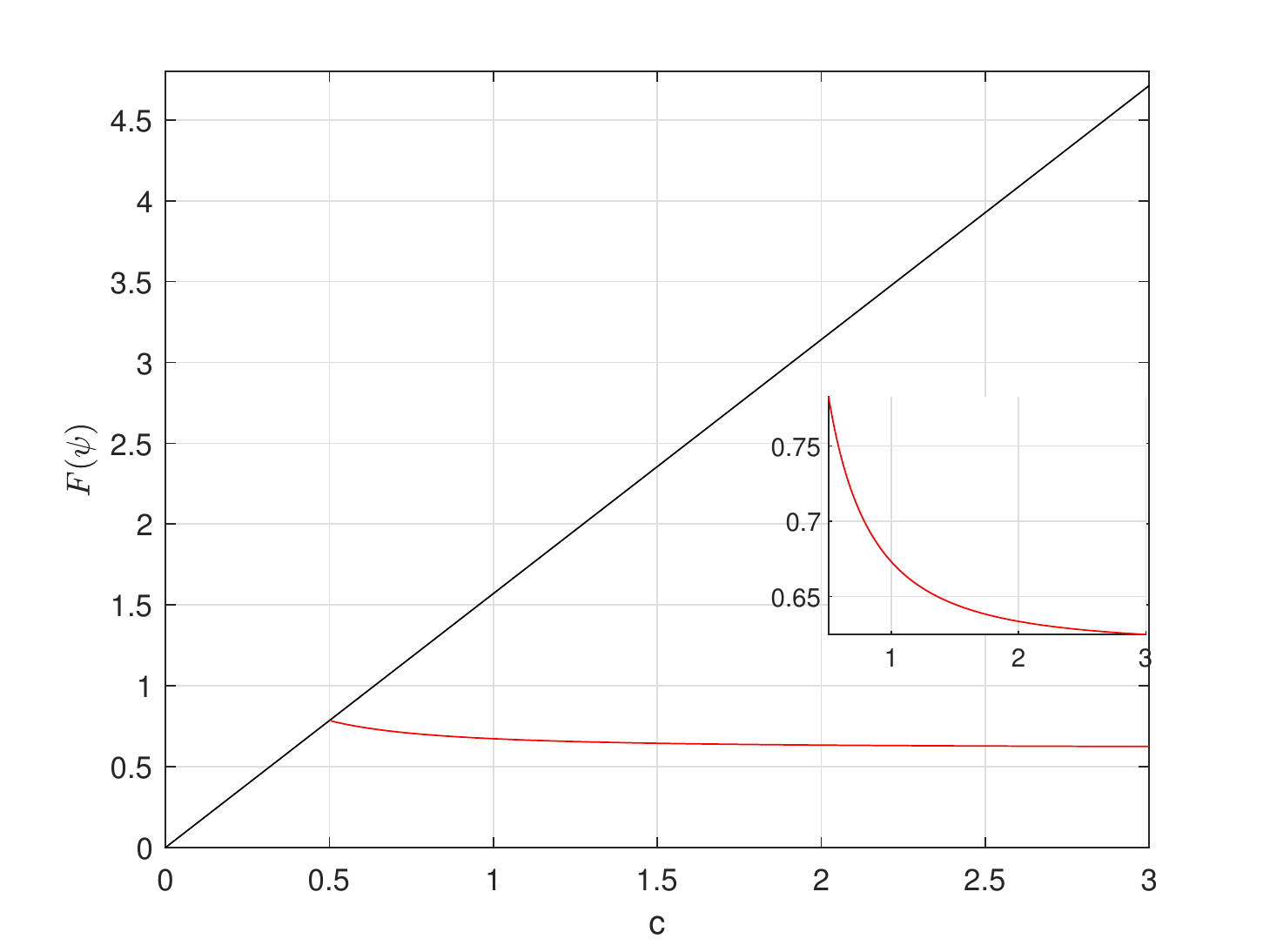} \\
	\includegraphics[width=0.45\linewidth]{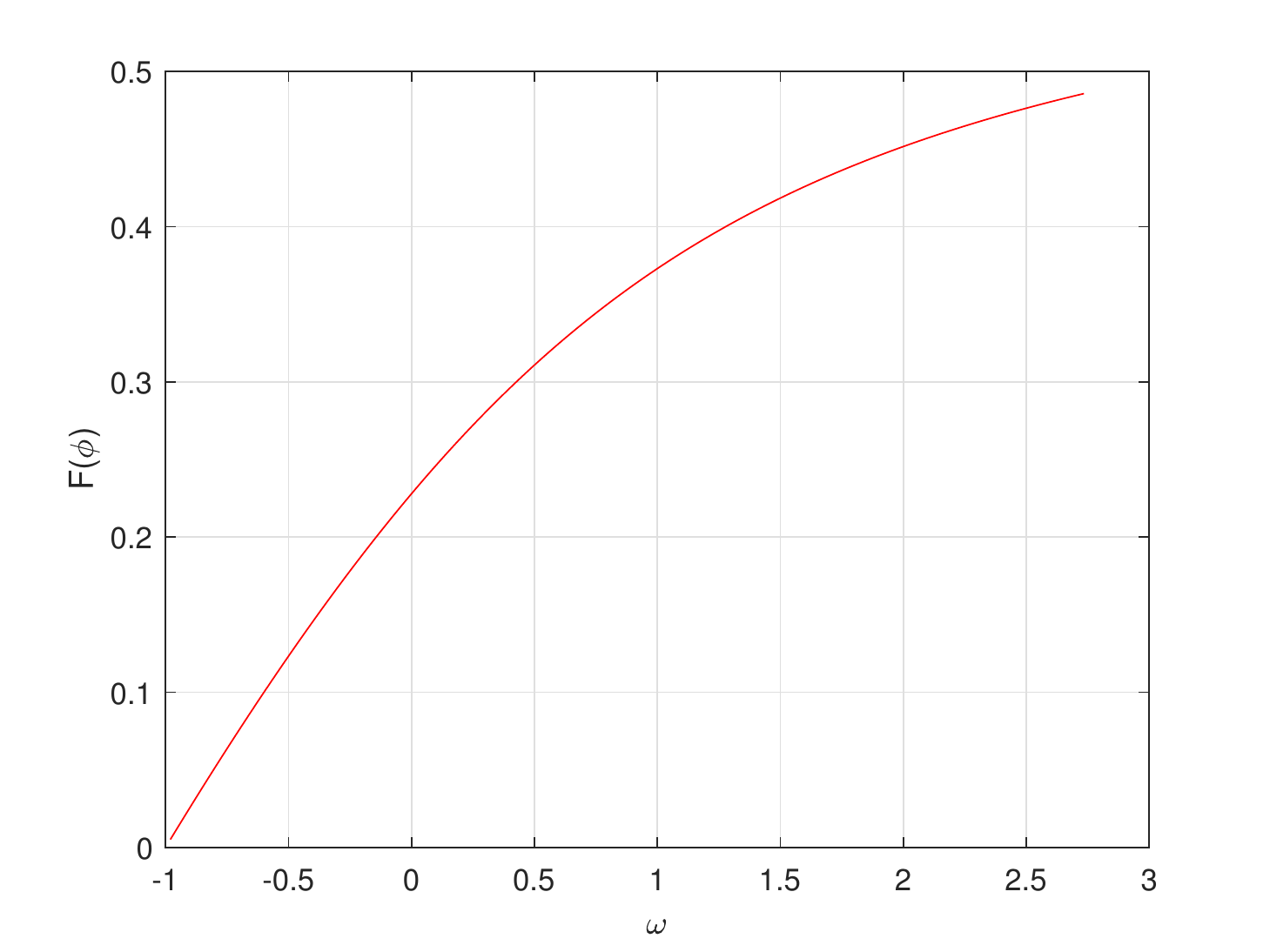}
	\includegraphics[width=0.45\linewidth]{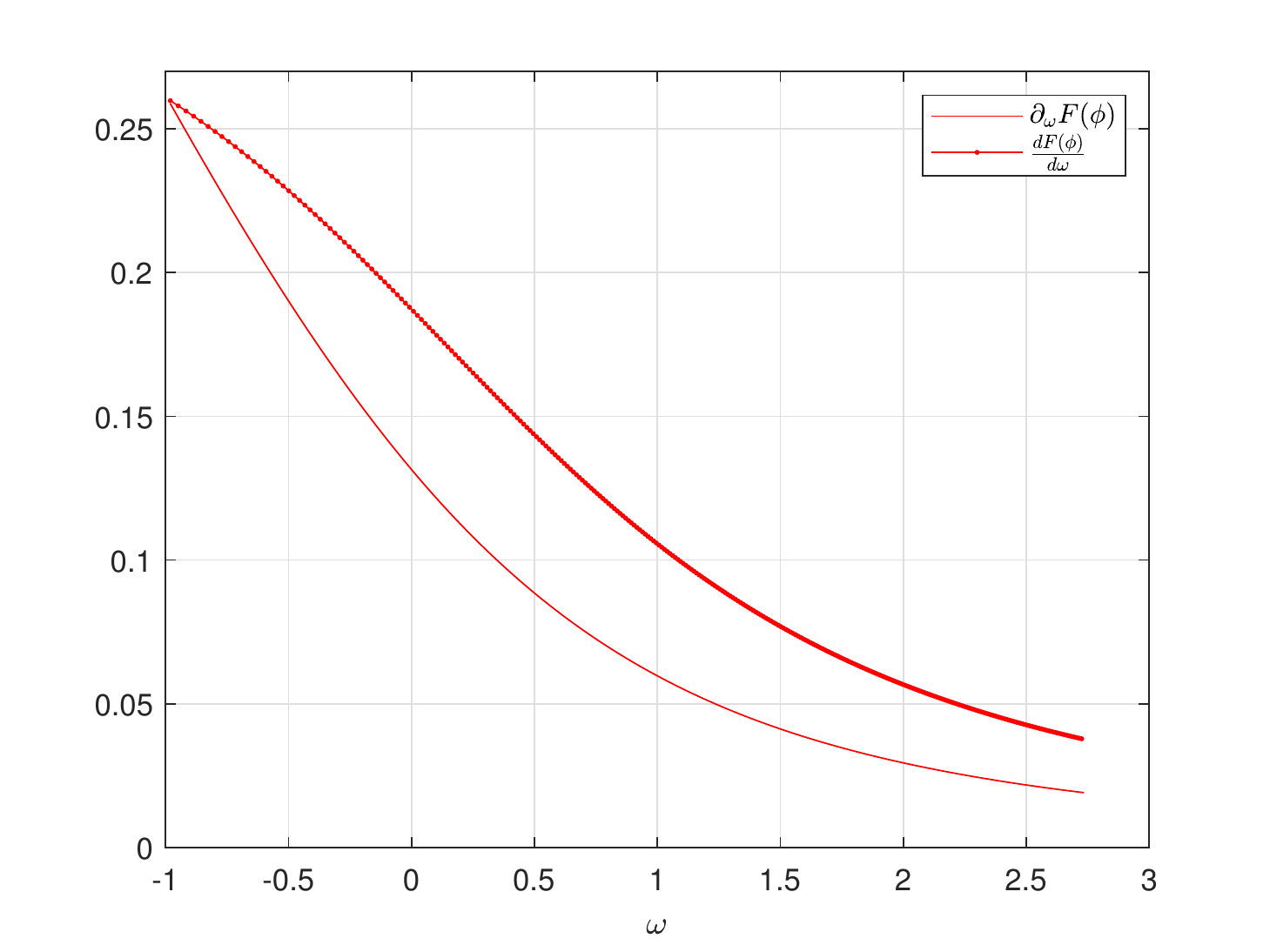}
	\caption{Periodic waves for $\alpha = 1$. Top: Profiles of $\psi$ for three different values of $c$ (left). Dependence of the momentum $F(\psi)$ versus $c$ (right). Bottom: Dependence of $F(\phi)$ versus $\omega$ (left).
		and derivatives of $F(\phi)$ in $\omega$ (right). The thin (thick) line shows the partial (ordinary) derivative in $\omega$.}
	\label{fig:alpha1}
\end{figure}

Figure \ref{fig:alpha06} presents similar results but for $\alpha = 0.6 < \alpha_0$. The periodic wave with the single-lobe profile $\psi$ bifurcates to the left 
of the bifurcation point at $c  = \frac{1}{2}$. There exists a fold point 
$c = c_0 \approx 0.4722$, where the branch turns and extends to all values 
of $c > c_0$. The upper branch (shown in red line on the top right panel) in $c \in (c_0,\frac{1}{2})$ has $n(\mathcal{L}) = 2$, whereas the lower branch (shown in blue line on the top right panel) has $n(\mathcal{L}) = 1$. The two branches were 
found iteratively from different initial approximations: the Stokes expansion 
was used for the upper branch and the periodic wave with larger $c > \frac{1}{2}$ was used for the lower branch, then the two branches were continued in either direction. The grey line on the top right panel shows the momentum $F(\psi)$ of the constant solution. 

It follows from the graph of $F(\phi)$ versus $\omega$ and its derivatives (bottom panels) that 
the periodic wave is stable near the bifurcation point before and after the fold point but there exists $c_* \approx 0.4774$ such that the 
even periodic wave is stable for $c < c_*$ and unstable for $c > c_*$. 
By comparing the partial and ordinary derivatives of $F(\phi)$ with respect to $\omega$, we can see that the partial derivative becomes zero for a smaller value of $\omega$, which gives the correct transttion from stability to instability at $c = c_*$ by Theorem \ref{theorem-stability-even}.

\begin{figure}[htpb]
	\centering
	\includegraphics[width=0.45\linewidth]{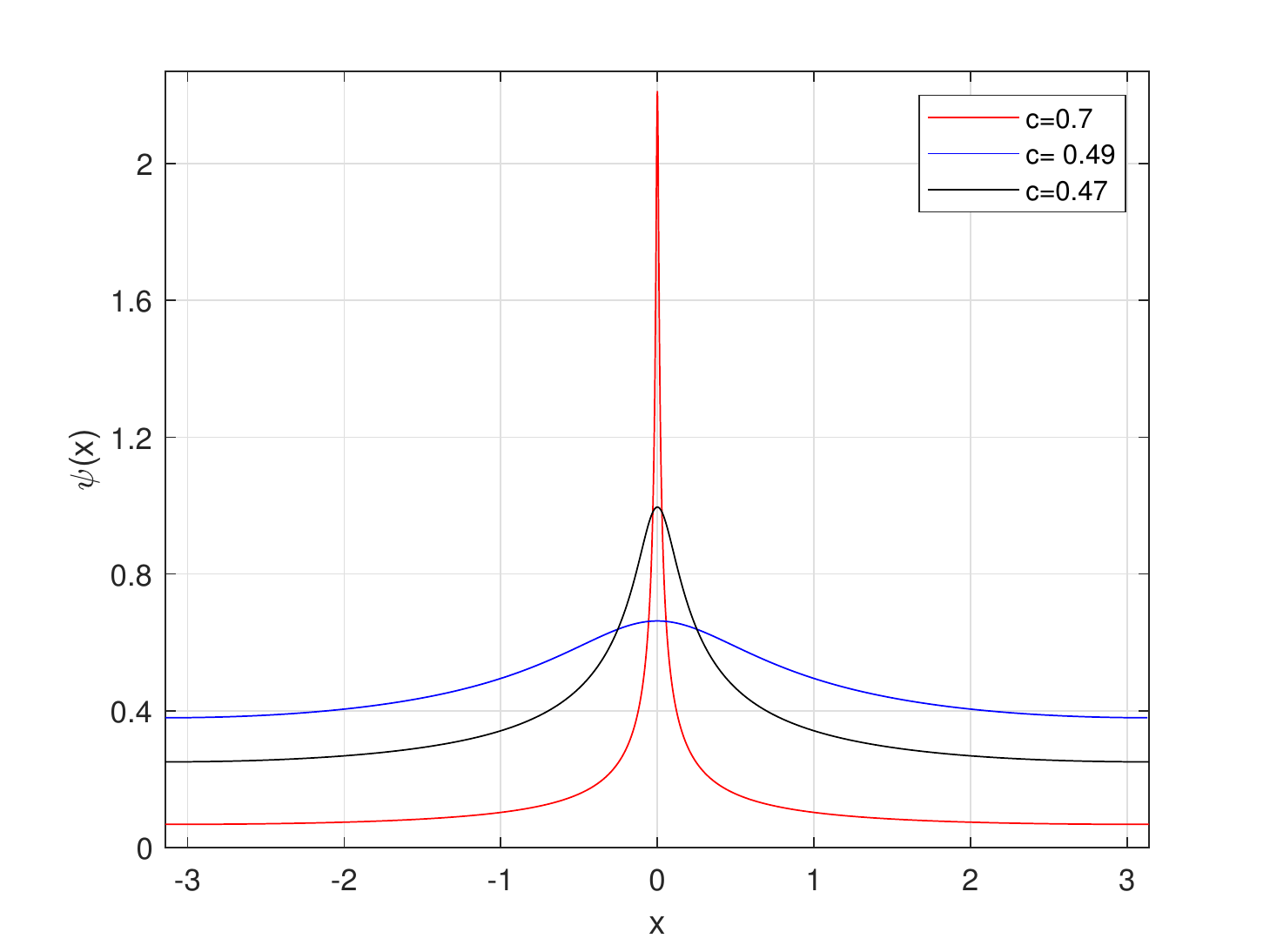} 
	\includegraphics[width=0.45\linewidth]{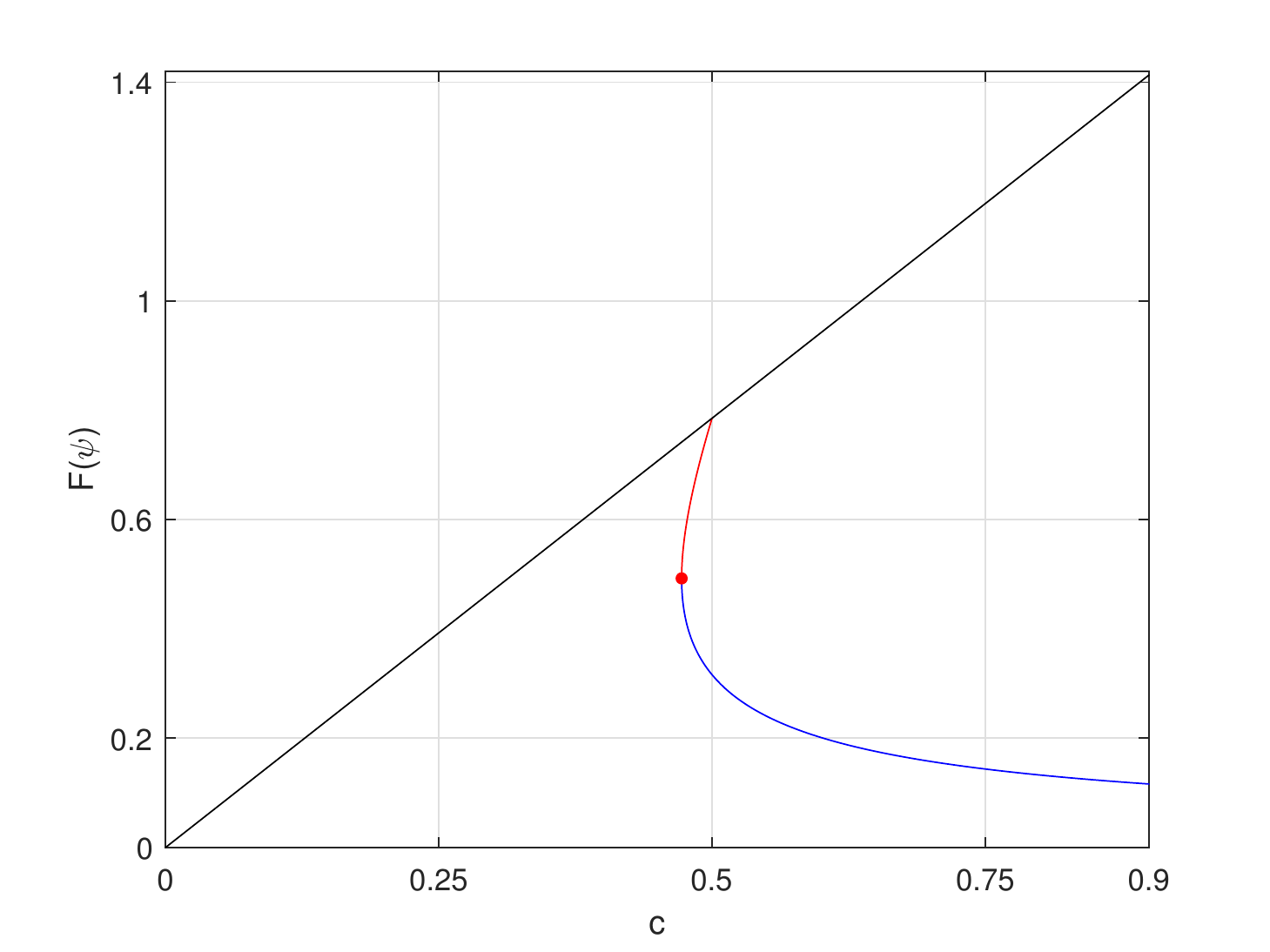} \\
	\includegraphics[width=0.45\linewidth]{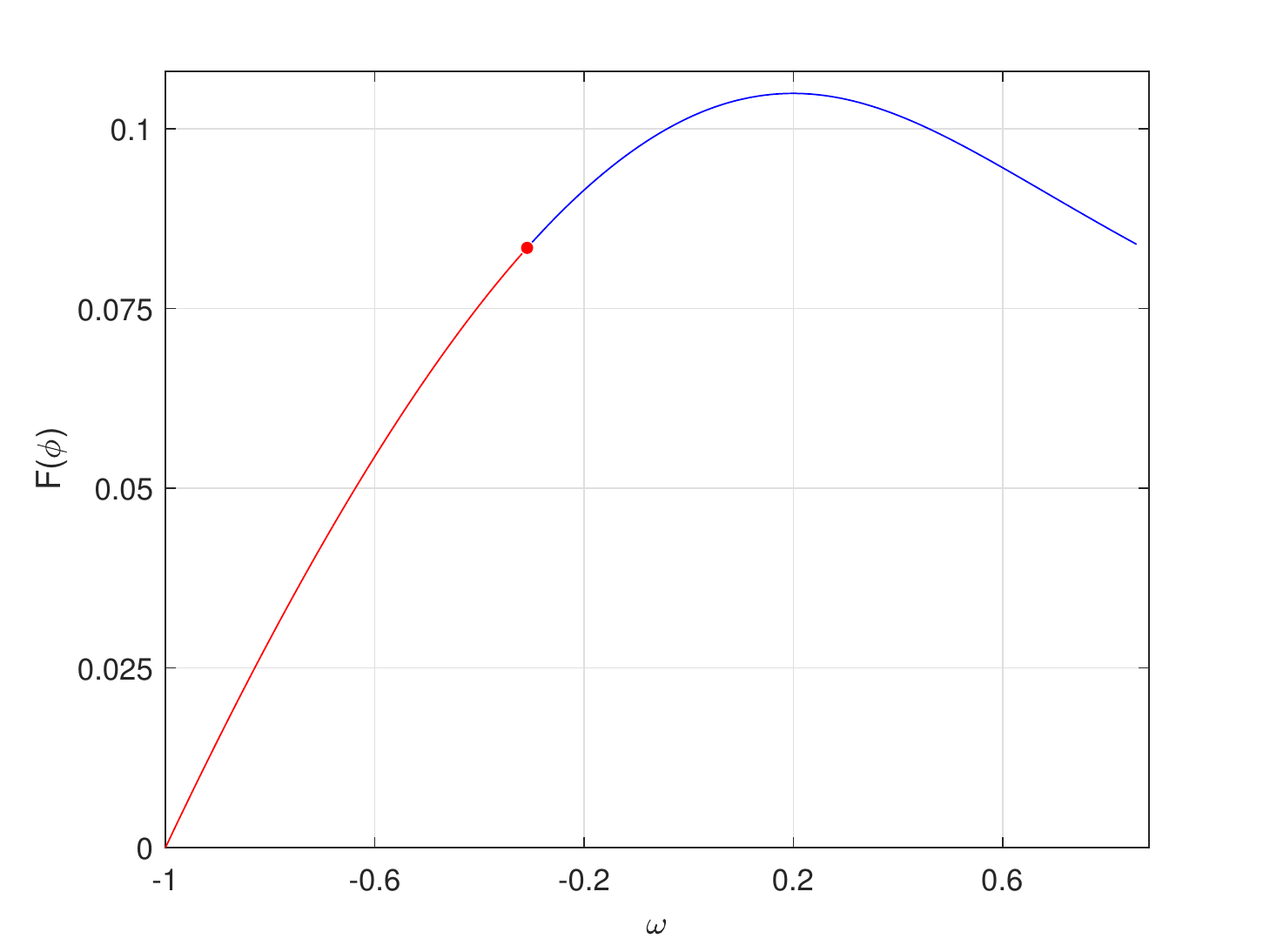}
	\includegraphics[width=0.45\linewidth]{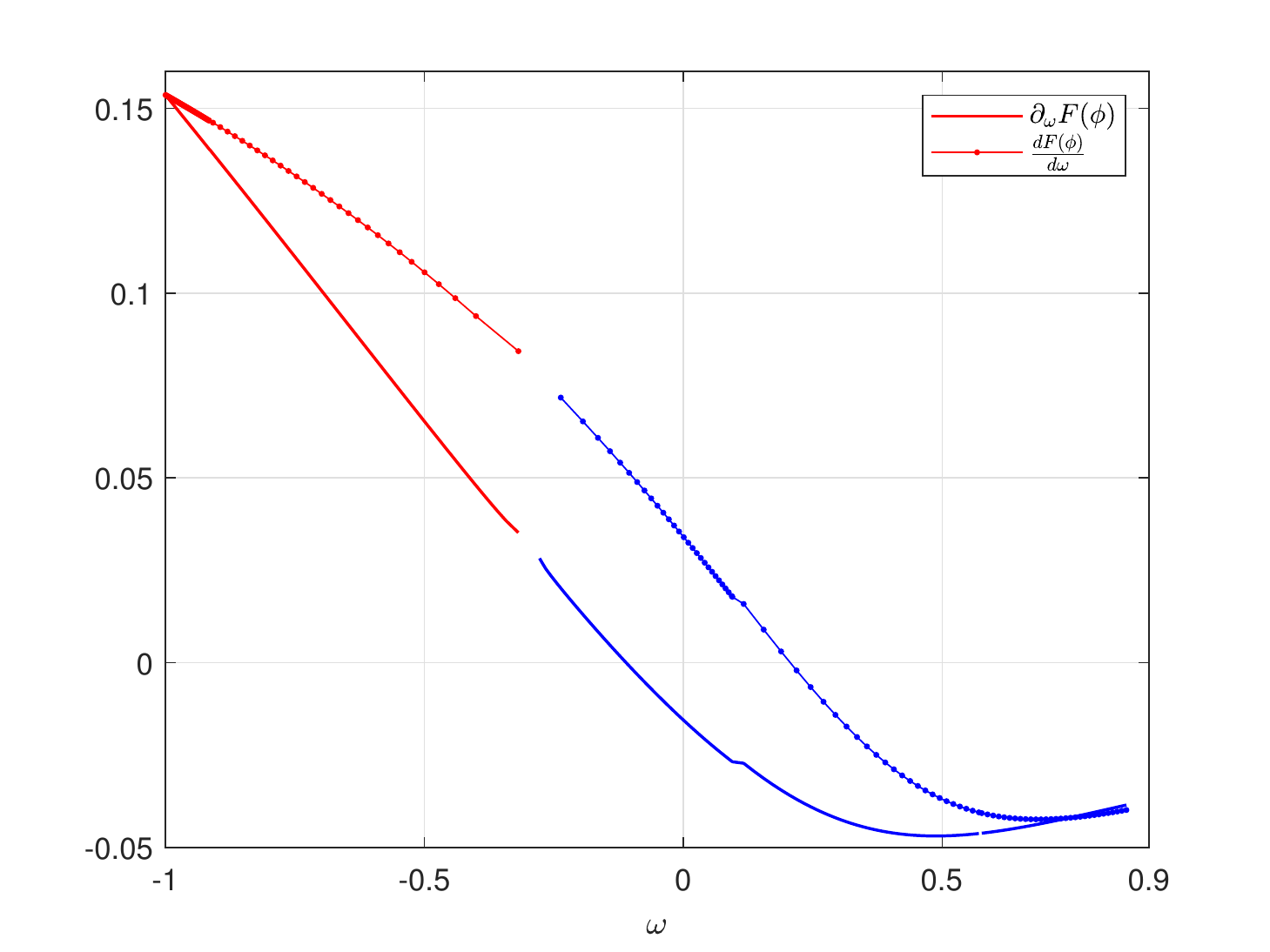}
	\caption{The same as Figure \ref{fig:alpha1} but for $\alpha = 0.6$.}
	\label{fig:alpha06}
\end{figure}

\section{Discussion}

It remains an open problem to characterize the most general solution of the stationary equation (\ref{ode-stat}) with arbitrary $b$. Here we generalize the two alternative parametrizations used in this work for the odd and even periodic waves.

Let $\psi \in H^{\alpha}_{\rm per, even}$ be a periodic wave solution to the stationary equation (\ref{ode-stat}) with
parameters $(c,b)$ defined in an open region $\mathcal{I} \subset \mathbb{R}^2$.
As in Section 5, we can define $\psi(x) = a + \phi(x)$, where $a := \frac{1}{2\pi} \int_{-\pi}^{\pi} \psi(x) dx$
so that $\langle 1, \phi \rangle = 0$. Then $\phi \in H^{\alpha}_{\rm per, even} \cap X_0$ is a solution of the stationary equation
(\ref{ode-stat-phi}), where $\omega := c - 6a^2$ and $\beta := b + c a - 2 a^3$. Parameters $(\omega,a)$
are defined in an open region $\mathcal{O} \subset \mathbb{R}^2$, which is
the image of the transformation $\omega = \omega(c,b)$ and $a = a(c,b)$, whereas
$\beta = \beta(c,b)$ is uniquely determined by (\ref{beta-expression}). Smoothness and
invertibility of this transformation are described as follows.

\begin{proposition}
\label{prop-conc-1}
Assume that $z(\mathcal{L}) = 1$ for a periodic wave with the single-lobe profile
$\psi \in H^{\alpha}_{\rm per, even}$. Then, the mapping $\mathcal{I} \ni (c,b) \mapsto (\omega,a) \in \mathcal{O}$ is $C^1$.
The transformation is invertible if and only if
\begin{equation}
\label{invertibility}
\frac{\partial a}{\partial b} \neq 0.
\end{equation}
\end{proposition}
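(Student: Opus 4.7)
My plan is to treat Proposition \ref{prop-conc-1} as a straightforward application of the implicit function theorem followed by the inverse function theorem, with the main input being $z(\mathcal{L}) = 1$ combined with parity considerations.

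First, I would set up the map $\Phi : H^{\alpha}_{\rm per, even} \times \mathbb{R}^2 \to L^2_{\rm even}(\mathbb{T})$ by $\Phi(\psi, c, b) := D^{\alpha}\psi + c\psi + b - 2\psi^3$, which vanishes at the given single-lobe solution $(\psi_0, c_0, b_0)$. Its Fr\'echet derivative with respect to $\psi$ is the operator $\mathcal{L}$ of (\ref{operator}). Since $z(\mathcal{L}) = 1$ by hypothesis, $\mathrm{Ker}(\mathcal{L}) = \mathrm{span}(\partial_x \psi_0)$, and since $\psi_0$ is even, $\partial_x \psi_0 \in L^2_{\rm odd}(\mathbb{T})$; therefore $\mathcal{L}|_{L^2_{\rm even}}$ has trivial kernel. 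As $\mathcal{L}$ is self-adjoint with compact resolvent (the embedding $H^{\alpha}_{\rm per} \hookrightarrow L^2(\mathbb{T})$ is compact for $\alpha > 1/2$), triviality of the kernel on the even subspace implies that $\mathcal{L}|_{L^2_{\rm even}}$ is a bounded linear isomorphism. By the implicit function theorem, there is a $C^1$ map $(c,b) \mapsto \psi(\cdot; c, b) \in H^{\alpha}_{\rm per, even}$ defined in a neighborhood of $(c_0, b_0)$. Hence
\begin{equation*}
a(c,b) := \frac{1}{2\pi} \int_{-\pi}^{\pi} \psi(x; c, b)\, dx, \qquad \omega(c,b) := c - 6 a(c,b)^2
\end{equation*}
are $C^1$ functions of $(c,b)$, which proves the first claim of the proposition.

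For the invertibility, I would simply compute the Jacobian determinant of the transformation $(c,b) \mapsto (\omega,a)$. Using $\omega = c - 6a^2$, one gets
\begin{equation*}
\det \frac{\partial(\omega,a)}{\partial(c,b)}
= \det \begin{pmatrix} 1 - 12 a\, \partial_c a & -12 a\, \partial_b a \\ \partial_c a & \partial_b a \end{pmatrix}
= \partial_b a - 12 a \,\partial_c a\, \partial_b a + 12 a\, \partial_b a\, \partial_c a = \frac{\partial a}{\partial b}.
\end{equation*}
Thus the Jacobian is nonsingular if and only if $\partial a / \partial b \neq 0$, and the inverse function theorem gives local $C^1$ invertibility in that case, while a vanishing Jacobian obstructs it. This establishes the equivalence (\ref{invertibility}).

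The argument is almost entirely routine; the only substantive point is the parity observation that $z(\mathcal{L})=1$ suffices to make $\mathcal{L}|_{L^2_{\rm even}}$ invertible (even though $\mathcal{L}$ itself is not), so that the IFT can be applied in the even subspace where the solution $\psi$ naturally lives. There is no serious obstacle in the proof; the mild subtlety worth flagging is that the whole discussion is local around a given reference solution, so the regions $\mathcal{I}$ and $\mathcal{O}$ should be understood as sufficiently small neighborhoods arising from the IFT, on which $\psi$, $a$, and $\omega$ extend as $C^1$ maps.
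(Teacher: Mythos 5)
Your proposal is correct and follows essentially the same route as the paper: the implicit function theorem applied to the stationary equation in the even subspace (where $z(\mathcal{L})=1$ forces $\mathrm{Ker}(\mathcal{L})=\mathrm{span}(\partial_x\psi)$ to be odd, hence invisible to $\mathcal{L}|_{L^2_{\rm even}}$), followed by the identical Jacobian computation $\det\partial(\omega,a)/\partial(c,b)=\partial a/\partial b$. You merely spell out the IFT step that the paper delegates to the proof of Lemma \ref{prop-continuation}.
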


\begin{proof}
In the case $z(\mathcal{L}) = 1$, the mapping $\mathcal{I} \ni (c,b) \mapsto \psi \in H^{\alpha}_{\rm per, even}$
is $C^1$ by the implicit function theorem (applied similarly to the proof of Lemma \ref{prop-continuation}). Let $\omega := c - 6a^2$
and $a := \frac{1}{2\pi} \int_{\mathbb{T}} \psi(x) dx$.
The mapping $\mathcal{I} \ni (c,b) \mapsto (\omega,a) \in \mathcal{O}$ is $C^1$ and the Jacobian of the transformation is given by
$$
\left| \begin{matrix} 1 - 12 a \frac{\partial a}{\partial c} & - 12 a \frac{\partial a}{\partial b} \\
\frac{\partial a}{\partial c} & \frac{\partial a}{\partial b} \end{matrix} \right| = \frac{\partial a}{\partial b},
$$
so that the transformation is invertible if and only if the condition (\ref{invertibility}) is satisfied.
\end{proof}

\begin{remark}
Since the mapping $\mathcal{I} \ni (c,b) \mapsto \psi \in H^{\alpha}_{\rm per, even}$
is $C^1$ in Proposition \ref{prop-conc-1}, we have 
$$
\mathcal{L} \frac{\partial \psi}{\partial b} = -1 \quad \Rightarrow \quad 
\sigma_0 = \langle \mathcal{L}^{-1} 1,1 \rangle = -\langle \frac{\partial \psi}{\partial b},1 \rangle 
= - 2 \pi \frac{\partial a}{\partial b}.
$$
It follows that the criterion (\ref{invertibility}) is equivalent to the criterion for $z(\mathcal{L}|_{X_0}) = 1$. If $\frac{\partial a}{\partial b} = 0$, 
then $z(\mathcal{L}|_{X_0}) = 2$ and $\frac{\partial \psi}{\partial b} \in {\rm Ker}(\mathcal{L} |_{X_0})$. In the latter case, we have bifurcation studied in the context of the odd periodic wave in Lemma \ref{lemma-characterization}.
\end{remark}

Similarly, but in the opposite direction, let $\phi \in H^{\alpha}_{\rm per, even} \cap X_0$ be
a periodic wave solution to the stationary equation (\ref{ode-stat-phi}) with
parameters $(\omega,a)$ defined in an open region $\mathcal{O} \subset \mathbb{R}^2$
and parameter $\beta = \beta(\omega,a)$ being uniquely defined by (\ref{beta-expression}).
Then, $\psi(x) = a + \phi(x) \in H^{\alpha}_{\rm per, even}$ is a solution of the stationary equation
(\ref{ode-stat}), where $c := \omega + 6a^2$ and $b := \beta(\omega,a) - \omega a - 4 a^3$. Parameters $(c,b)$
are defined in an open region $\mathcal{I} \subset \mathbb{R}^2$, which is
the image of the transformation $c = c(\omega,a)$ and $b = b(\omega,a)$. Smoothness and
invertibility of this transformation are described as follows.
\begin{proposition}
\label{prop-conc-2}
Assume that $z(\mathcal{L}|_{X_0}) = 1$ for a zero-mean periodic wave with the single-lobe profile
$\phi \in H^{\alpha}_{\rm per, even} \cap X_0$. Then, the mapping $\mathcal{O} \ni (\omega,a) \mapsto (c,b) \in \mathcal{I}$ is $C^1$.
The transformation is invertible if and only if
\begin{equation}
\label{invertibility-2}
\omega - \frac{\partial \beta}{\partial a} + 12 a \frac{\partial \beta}{\partial \omega} \neq 0.
\end{equation}
\end{proposition}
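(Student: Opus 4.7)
The plan is to mirror the argument of Proposition \ref{prop-conc-1}, now starting from the zero-mean profile $\phi$ and parameters $(\omega,a)$. Since $z(\mathcal{L}|_{X_0}) = 1$ by hypothesis and ${\rm Ker}(\mathcal{L}|_{X_0}) = {\rm span}(\partial_x\phi)$ lies in the subspace of odd functions, the operator $\mathcal{L}|_{X_0}$ has trivial kernel when restricted to $H_{\rm per, even}^{\alpha} \cap X_0$. This is precisely the non-degeneracy condition verified in the proof of Lemma \ref{lem-continuation-even}, so the same implicit function theorem argument yields a $C^1$ mapping $(\omega,a) \mapsto \phi(\cdot;\omega,a) \in H_{\rm per, even}^{\alpha} \cap X_0$ in an open neighborhood $\mathcal{O}$ of the reference point. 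Composing with (\ref{beta-expression}), which is smooth in $\phi$ since $H_{\rm per}^{\alpha}$ is a Banach algebra for $\alpha > \frac{1}{2}$, we obtain that $\beta$ is $C^1$ in $(\omega,a)$. The formulas $c = \omega + 6a^2$ and $b = \beta(\omega,a) - \omega a - 4a^3$ then yield the desired $C^1$ map $\mathcal{O} \ni (\omega,a) \mapsto (c,b) \in \mathcal{I}$.

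For the invertibility criterion, I would compute the Jacobian of this mapping directly. Using $\partial_{\omega} c = 1$, $\partial_a c = 12 a$, $\partial_{\omega} b = \partial_{\omega} \beta - a$, and $\partial_a b = \partial_a \beta - \omega - 12 a^2$, the Jacobian determinant simplifies to
\begin{equation*}
\det \begin{pmatrix} 1 & 12 a \\ \partial_{\omega} \beta - a & \partial_a \beta - \omega - 12 a^2 \end{pmatrix}
= \partial_a \beta - \omega - 12 a \, \partial_{\omega} \beta
= -\bigl( \omega - \partial_a \beta + 12 a \, \partial_{\omega} \beta \bigr),
\end{equation*}
so by the inverse function theorem the transformation is a local diffeomorphism if and only if (\ref{invertibility-2}) holds.

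Neither step is expected to present a serious obstacle: the non-degeneracy hypothesis $z(\mathcal{L}|_{X_0}) = 1$ directly supplies what is needed for the implicit function theorem via Lemma \ref{lem-continuation-even}, and the Jacobian calculation is explicit. The only mild point requiring care is that $\beta$ depends on $(\omega,a)$ both through the explicit coefficient $3a$ appearing in (\ref{beta-expression}) and through the implicit dependence of $\phi$ on $(\omega,a)$; since both contributions are $C^1$, this does not affect either the regularity claim or the Jacobian computation.
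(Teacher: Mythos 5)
Your proposal is correct and follows essentially the same route as the paper: the $C^1$ dependence of $\phi$ (hence of $\beta$) on $(\omega,a)$ via the implicit function theorem under the hypothesis $z(\mathcal{L}|_{X_0})=1$, followed by a direct evaluation of the $2\times 2$ Jacobian of $(\omega,a)\mapsto(c,b)$. Your determinant, $\partial_a\beta-\omega-12a\,\partial_\omega\beta = -\bigl(\omega-\partial_a\beta+12a\,\partial_\omega\beta\bigr)$, is in fact the correctly signed value (the paper's displayed intermediate expression carries a sign typo on the $12a\,\partial_\omega\beta$ term, though its conclusion agrees with (\ref{invertibility-2})).
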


\begin{proof}
In the case $z(\mathcal{L}|_{X_0}) = 1$, the mapping $\mathcal{O} \ni (\omega,a) \mapsto \phi \in H^{\alpha}_{\rm per, even} \cap X_0$
is $C^1$ by the implicit function theorem (applied similarly to the proof of Lemma \ref{lem-continuation-even}). Let $c := \omega + 6 a^2$
and $b := \beta(\omega,a) - \omega a - 4 a^3$. 
The mapping $\mathcal{O} \ni (\omega,a) \mapsto (c,b) \in \mathcal{I}$ is $C^1$ and  the Jacobian of the transformation is given by
$$
\left| \begin{matrix} 1 & 12 a  \\
\frac{\partial \beta}{\partial \omega} - a & \frac{\partial \beta}{\partial a} - \omega - 12 a^2 \end{matrix} \right|
= \frac{\partial \beta}{\partial a} + 12 a \frac{\partial \beta}{\partial \omega} - \omega,
$$
so that the transformation is invertible if and only if the condition (\ref{invertibility-2}) is satisfied.
\end{proof}

\begin{remark}
The criterion (\ref{invertibility-2}) is equivalent to the criterion 
for $z(\mathcal{L}) = 1$ as in Lemma \ref{lemma-kernel}. If 
$$
\omega - \frac{\partial \beta}{\partial a} + 12 a \frac{\partial \beta}{\partial \omega} = 0, 
$$
then $z(\mathcal{L}) = 2$ as follows 
from equality (\ref{range-kernel}).
\end{remark}

Both parameters $c$ and $b$ arise as Lagrange multipliers of the following variational problem with two constraints
\begin{equation}
\label{infB-gen}
r_{c,m} := \inf_{u\in H^{\frac{\alpha}{2}}_{\rm per}} \left\{ \mathcal{B}_c(u) : \quad
 \int_{-\pi}^{\pi} u^4 dx = 1, \quad \frac{1}{2\pi} \int_{-\pi}^{\pi} u dx = m \right\}.
\end{equation}
When $m = 0$, the variational problem (\ref{infB-gen}) reduces to the form (\ref{infB}) which was used in the context of the odd periodic waves.
Without loss of generality, it suffices to consider (\ref{infB-gen}) for $m \geq 0$ as solutions for $m \leq 0$ are mapped
to solutions for $m \geq 0$ by the transformation $u \mapsto -u$. 
As is shown in Appendix B, the variational problem 
(\ref{infB-gen}) defines constant solutions if $m = m_0 := (2 \pi)^{-\frac{1}{4}}$ 
and periodic waves with the single-lobe profile if $m \in (-m_0,m_0)$.

Further studies are needed to investigate how the variational problem (\ref{infB-gen}) with two constraints recovers the most general periodic solution to the stationary equation (\ref{ode-stat}) with two parameters $(c,b)$.

\vspace{0.25cm}

{\bf Acknowledgements:} F. Natali is supported by Funda\c{c}\~ao Arauc\'aria (grant 002/2017) and CNPq (grant 304240/2018-4). 
He would like to express his gratitude to members of the Department of Mathematics at McMaster University for their hospitality during his stay. U.Le is supported by the graduate scholarship from McMaster University. D.Pelinovsky is supported by the NSERC Discovery grant.

\appendix

\section{Stokes expansion of general small-amplitude waves}

Here we generalize the Stokes expansion of Section 5.1 in order to prove that
$\sigma_0 > 0$ for $\alpha > \alpha_0$ and $\sigma_0 < 0$ for $\alpha < \alpha_0$,
where $\sigma_0 = \langle \mathcal{L}^{-1} 1, 1 \rangle$ is computed on
the small-amplitude wave of Proposition \ref{prop-Stokes-even} for small amplitude $A$.

Let $\psi$ satisfy the stationary equation (\ref{ode-stat}) for $(c,b)$ defined
in an open neighborhood $\mathcal{I} \subset \mathbb{R}^2$ of the point
$\left(\frac{1}{2},0\right)$. We generalize the decomposition (\ref{Stokes-expansion}) by setting
\begin{equation}
\label{Stokes-expansion-app}
\psi(x) = \psi_0 + \varphi(x),
\end{equation}
where $\psi_0 = \psi(c,b)$ is a root of the cubic equation $b + c \psi_0 = 2 \psi_0^3$ and
$\varphi$ is not required to satisfy the zero-mean property. Since three roots exist for $\psi_0$
at $b = 0$, we are picking uniquely the positive root by using the expansion
\begin{equation}
\label{psi-0-expansion}
\psi_0(c,b) = \frac{1}{2} \sqrt{2c} + \frac{b}{2c} + \mathcal{O}(b^2).
\end{equation}
The stationary equation (\ref{ode-stat}) is written in the equivalent form:
\begin{equation}
\label{ode-stat-stokes-app}
D^{\alpha} \varphi + (c - 6 \psi_0^2) \varphi = 2 \varphi^3 + 6 \psi_0 \varphi^2,
\end{equation}
which generalizes (\ref{ode-stat-stokes}).
By using the Stokes expansion in terms of small amplitude $A$:
\begin{equation}
\label{Stokes-even-app}
\varphi(x) = A \varphi_1(x) + A^2 \varphi_2(x) + A^3 \varphi_3(x) + \mathcal{O}(A^4), \quad
c - 6 \psi_0^2 = -1 + A^2 \omega_2 + \mathcal{O}(A^4),
\end{equation}
we obtain recursively: $\varphi_1(x) = \cos(x)$,
$$
\varphi_2(x) = -3 \psi_0 + \frac{3 \psi_0}{2^{\alpha}-1} \cos(2x),
$$
$$
\varphi_3(x) = \frac{1}{3^{\alpha} - 1} \left[ \frac{1}{2} + \frac{18 \psi_0^2}{2^{\alpha}-1} \right] \cos(3x),
$$
and
$$
\omega_2 = \frac{3}{2} - 36 \psi_0^2 + \frac{18 \psi_0^2}{2^{\alpha}-1}.
$$
By substituting $\omega_2$ to the expansion for $c$ in (\ref{Stokes-even-app}) and using
expansion (\ref{psi-0-expansion}), we obtain
\begin{equation}
\label{A-squared}
\gamma_2 A^2 = 2c - 1 + 6 b + \mathcal{O}((2c-1)^2 + b^2),
\end{equation}
where $\gamma_2 = -\omega_2 |_{\psi_0 = \frac{1}{2}}$ is the same as in 
(\ref{Stokes-even}). By using (\ref{A-squared}), we obtain perturbatively:
\begin{eqnarray*}
a & = & \frac{1}{2\pi} \int_{-\pi}^{\pi} \psi(x) dx = \psi_0 \left[ 1 - 3 A^2 + \mathcal{O}(A^4) \right] \\
& = & \frac{1}{2} - a_1(2c-1) - a_2 b + \mathcal{O}((2c-1)^2 + b^2),\\
\omega & = & c - 6 a^2 \\
& = & -1 + \frac{1}{2} (1 + 12 a_1) (2c-1) + 6 a_2 b + \mathcal{O}((2c-1)^2 + b^2),
\end{eqnarray*}
and
\begin{eqnarray*}
\beta & = & b + c a - 2 a^3 \\
& = & \frac{1}{4} (1 + 4 a_1)(2c-1) + (1+ a_2) b + \mathcal{O}((2c-1)^2 + b^2),
\end{eqnarray*}
where
\begin{eqnarray*}
a_1 := \frac{3}{8 \gamma_2} \frac{4 - 2^{\alpha}}{2^{\alpha} - 1}, \quad
a_2 := \frac{3}{2 \gamma_2} \frac{2 + 2^{\alpha}}{2^{\alpha} - 1}.
\end{eqnarray*}
If $\gamma_2 \neq 0$ for $\alpha \neq \alpha_0$ given by (\ref{alpha-0}), the transformation $\mathcal{I} \ni (c,b) \mapsto (\omega,a) \in \mathcal{O}$
is $C^1$ and invertible with the inverse transformation
\begin{eqnarray*}
c & = & \frac{1}{2} + (\omega + 1) + 6 (a -\frac{1}{2}) + \mathcal{O}((\omega + 1)^2 + (2a-1)^2),\\
b & = & -\frac{1}{a_2} \left[ 2 a_1 (\omega + 1) + \frac{1}{2} (1 + 12 a_1) (2a - 1) + \mathcal{O}((\omega + 1)^2 + (2a-1)^2) \right],
\end{eqnarray*}
from which we obtain $\beta = \beta(\omega,a)$:
$$
\beta = \frac{a_2 - 4 a_1}{2a_2} (\omega + 1) + \frac{2 a_2 - 1 - 12 a_1}{2a_2}  (2a - 1) + \mathcal{O}((\omega + 1)^2 + (2a-1)^2)
$$
and
$$
s_0 = \omega - \partial_a \beta + 12 a \partial_{\omega} \beta  = \frac{1}{a_2} + \mathcal{O}((\omega + 1)^2 + (2a-1)^2).
$$
Since $\sigma_0 = \frac{2\pi}{s_0}$, we have ${\rm sign}(\sigma_0) = {\rm sign}(a_2) = {\rm sign}(\gamma_2)$,
from which it follows that $\sigma_0 > 0$ for $\alpha > \alpha_0$ and $\sigma_0 < 0$ for $\alpha < \alpha_0$.

Furthermore, it follows from (\ref{A-squared}) that 
$$
\gamma_2 A^2 = \frac{2 (a_2 - 6 a_1)}{a_2} (\omega + 1) + \frac{3(2a_2 - 1 - 12 a_1)}{a_2} (2 a - 1) + \mathcal{O}((\omega+1)^2 + (2a-1)^2).
$$
Explicit computation shows that $2a_2 - 1 - 12 a_1 = 0$, hence 
$\| \phi \|^2_{L^2} = \pi A^2 + \mathcal{O}(A^4)$ 
as a function of $(\omega,a)$ satisfies 
$$
\frac{\partial}{\partial \omega} \| \phi \|_{L^2}^2 = \frac{2\pi (a_2 - 6 a_1)}{\gamma_2 a_2} + \mathcal{O}(A^2) = \frac{2\pi}{3} 
\frac{2^{\alpha} - 1}{2^{\alpha} + 2}  + \mathcal{O}(A^2) 
= \frac{d}{d \omega} \| \phi \|_{L^2}^2 + \mathcal{O}(A^2),
$$
in agreement with (\ref{der-small-amplitude}). In other words, 
although $a$ is defined by $\omega$ at the periodic waves satisfying $b = 0$ by 
$$
2a - 1 = -\frac{4 a_1}{1 + 12 a_1} (\omega + 1) + \mathcal{O}((\omega + 1)^2),
$$
this dependence does not result in the discrepancy between partial and ordinary 
derivatives of $\| \phi \|_{L^2}^2$ in $\omega$ along the family 
of even periodic waves in the limit $A \to 0$.

\section{On the variational problem (\ref{infB-gen}) with two constraints}

We show that the periodic solutions to the stationary equations (\ref{ode-stat}) with two parameters $(c,b)$ can be recovered from
the ground state of the variational problem (\ref{infB-gen}).

\begin{proposition}
	\label{prop-conc-3}
	Fix $\alpha > \frac{1}{2}$ and $m_0 := (2\pi)^{-\frac{1}{4}}$. For every $m \in [-m_0,m_0]$ and every $c \in (-1,\infty)$,
	there exists the ground state (minimizer)  $\chi \in H^{\frac{\alpha}{2}}_{\rm per}$ of the variational problem (\ref{infB-gen}).
	If $m \in (-m_0,m_0)$, the ground state has the single-lobe profile and
	there exists $C > 0$ such that $\psi(x) = C \chi(x)$ satisfies the stationary equation (\ref{ode-stat}) with some $b$.
\end{proposition}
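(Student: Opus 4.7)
My plan is to apply the direct method of calculus of variations in parallel with Theorems \ref{theorem-existence} and \ref{theorem-existence-even}, treating the mean-value constraint via the splitting $u = m + v$ with $v \in X_0$. The boundary case $m = \pm m_0$ is immediate from H\"older's inequality: $|m| = \frac{1}{2\pi}|\int_{-\pi}^{\pi} u \, dx| \leq (2\pi)^{-3/4} \|u\|_{L^4} = m_0$ with equality only for constant $u$, so the only admissible function is $u \equiv \pm m_0$, which trivially realizes the infimum. For $|m| < m_0$, the decomposition gives
$$\mathcal{B}_c(u) = \tfrac{1}{2}\|D^{\alpha/2}v\|_{L^2}^2 + \tfrac{c}{2}\bigl(2\pi m^2 + \|v\|_{L^2}^2\bigr),$$
and the Poincar\'e inequality on $X_0$ yields $\mathcal{B}_c(u) \geq \tfrac{1+c}{2}\|v\|_{L^2}^2 + \pi c m^2$, which is bounded below and coercive in $v$ whenever $c > -1$. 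A minimizing sequence is therefore bounded in $H^{\alpha/2}_{\rm per}$; compact embedding into $L^4$ (which holds since $\alpha > 1/2$) together with the telescoping $L^4$ estimate of Theorem \ref{theorem-existence} propagates the quartic constraint into the limit, while the mean constraint survives weak convergence. Weak lower semi-continuity of $\mathcal{B}_c$ then identifies the weak limit $\chi$ as a minimizer.

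Having a minimizer $\chi$, I would next extract the stationary equation. Since $|m| < m_0$ strictly, $\chi$ is not constant, so the constraint gradients $4\chi^3$ and $\frac{1}{2\pi}\cdot 1$ are linearly independent in $L^2$. Lagrange's theorem yields
$$D^{\alpha}\chi + c\chi = \mu\chi^3 + \nu$$
for real multipliers $\mu, \nu$. Testing against $\chi$ and using $\int_{-\pi}^{\pi} \chi^4\,dx = 1$, $\int_{-\pi}^{\pi}\chi\,dx = 2\pi m$ gives $2\mathcal{B}_c(\chi) = \mu + 2\pi m \nu$, while integrating the equation directly gives $\nu = cm - \frac{\mu}{2\pi}\int_{-\pi}^{\pi} \chi^3\,dx$, so $\mu$ and $\nu$ are determined. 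Positivity $\mu > 0$ can be extracted from a perturbation on the constraint manifold exactly as in the proofs of Corollaries \ref{corollary-existence} and Theorem \ref{theorem-existence-even}. The scaling $\psi := C\chi$ with $C := \sqrt{\mu/2}$ then converts the Euler--Lagrange equation into (\ref{ode-stat}) with $b := -C\nu$.

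For the single-lobe claim, the intended tool is the fractional P\'olya--Szeg\H{o} inequality (Lemma A.1 in \cite{CJ2019}): if $\chi \geq 0$, its symmetric decreasing rearrangement $\chi^*$ preserves every moment $\int_{-\pi}^{\pi}\chi^p\,dx$ while not increasing $\mathcal{B}_c(\chi)$, so $\chi^*$ is also a minimizer and the original minimizer must, up to translation, be symmetric decreasing, hence single-lobe. The main obstacle is that rearrangement fails to preserve the mean constraint whenever $\chi$ changes sign, because $\int_{-\pi}^{\pi}|\chi|\,dx > \int_{-\pi}^{\pi}\chi\,dx$ strictly. I would circumvent this by a continuation argument from the endpoint $m = m_0$, where $\chi \equiv m_0 > 0$, combined with bootstrap regularity and a sign-preservation estimate to ensure $\chi$ remains nonnegative on an open neighborhood of $m = m_0$; for intermediate $m$, I would invoke the Sturm-type nodal analysis used for $\mathcal{L}$ in Propositions \ref{prop-nodal}--\ref{prop-kernel} to bound the number of extrema of any constrained critical point, deducing the single-lobe conclusion directly from the two-constraint variational structure.
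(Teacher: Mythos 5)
Your existence argument (the splitting $u=m+v$ with $v\in X_0$, Poincar\'e's inequality, compactness of the embedding into $L^4$, weak lower semicontinuity) and your treatment of the endpoint $|m|=m_0$ match the paper, which simply invokes the analysis of Theorem \ref{theorem-existence}. There are, however, two genuine gaps. First, the positivity of the Lagrange multiplier $\mu$ does \emph{not} follow ``exactly as'' in Corollary \ref{corollary-existence}: there the single constraint gives $\mu=2\mathcal{B}_c(\chi)>0$ outright, whereas here testing against $\chi$ only gives $\mu=2\mathcal{B}_c(\chi)+2\pi m\nu$ with $\nu$ of unknown sign, and ``a perturbation on the constraint manifold'' is not an argument. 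The paper eliminates $\nu$ to obtain
$$
\Bigl[\,1-m\int_{-\pi}^{\pi}\chi^3\,dx\Bigr]\mu \;=\; 2\bigl[\mathcal{B}_c(\chi)-\pi c m^2\bigr],
$$
and then shows both brackets are strictly positive for non-constant $\chi$: the right-hand one because $2\mathcal{B}_c(\chi)-2\pi cm^2=\|D^{\frac{\alpha}{2}}\chi\|_{L^2}^2+c\|\chi-m\|_{L^2}^2\geq(1+c)\|\chi-m\|_{L^2}^2>0$, and the left-hand one via the symmetrized identity
$$
1-m\int_{-\pi}^{\pi}\chi^3\,dx=\frac{1}{16\pi}\int_{\mathbb{T}}\!\int_{\mathbb{T}}\Bigl(\bigl[\chi(x)-\chi(y)\bigr]^4+3\bigl[\chi^2(x)-\chi^2(y)\bigr]^2\Bigr)\,dx\,dy>0
$$
(equivalently, Chebyshev's integral inequality for the similarly ordered pair $\chi,\chi^3$). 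Without some version of this step you cannot set $C=\sqrt{\mu/2}$, and the normalization to $2\psi^3$ in (\ref{ode-stat}) may fail.

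Second, the ``main obstacle'' you identify for the single-lobe claim is spurious, and the detour you propose does not close it. The rearrangement used here (Lemma A.1 of \cite{CJ2019}, exactly as in Theorem \ref{theorem-existence}) is the equimeasurable symmetric-decreasing rearrangement of $\chi$ itself, not of $|\chi|$; equimeasurability preserves $\int F(\chi)\,dx$ for every Borel $F$, hence both $\int\chi^4\,dx=1$ and $\int\chi\,dx=2\pi m$, while the fractional P\'olya--Szeg\H{o} inequality applies to sign-changing $\chi$ after adding a constant, which changes neither the seminorm nor the structure of the rearrangement. Thus the minimizer is either constant or single-lobe, and constants are excluded for $|m|<m_0$ by the strict case of H\"older's inequality --- the paper's one-line conclusion. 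By contrast, your continuation from $m=m_0$ only controls a neighborhood of the endpoint, the ``sign-preservation estimate'' is unspecified, and Propositions \ref{prop-nodal}--\ref{prop-kernel} concern eigenfunctions of the linearized operator $\mathcal{L}$, not the profile $\chi$, so they do not by themselves bound the number of extrema of a constrained critical point.
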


\begin{proof}
	The bound $|m| \leq m_0$ follows by the H\"{o}lder's inequality
	$$
	\left| \int_{\mathbb{T}} u dx \right| \leq \left( \int_{\mathbb{T}} 1^{\frac{4}{3}} dx \right)^{\frac{3}{4}}
	\left( \int_{\mathbb{T}} u^4 dx \right)^{\frac{1}{4}} = (2\pi)^{\frac{3}{4}}.
	$$
	The quadratic functional $B_c(u)$ in (\ref{def-B}) is bounded from below by the Poincar\'{e} inequality:
	$$
	B_c(u) \geq \frac{1}{2} \| u - m \|^2_{L^2} + \frac{1}{2} c \| u \|^2_{L^2} =
	\frac{1}{2} (1 + c) \| u \|^2_{L^2} - \pi m^2.
	$$
	By the same analysis as in the proof of Theorem \ref{theorem-existence},
	for every $m \in [-m_0,m_0]$ and every $c \in (-1,\infty)$, there exists the ground state
	$\chi \in H^{\frac{\alpha}{2}}_{\rm per}$ of the variational problem (\ref{infB-gen}). Moreover, by the symmetric rearrangements, 
	the ground state is either constant or has the single-lobe profile. The constant solution
	corresponds to $|m| = m_0$, hence the ground state has the single-lobe profile if $m \in (-m_0,m_0)$.
	
	With two Lagrange multipliers $\mu$ and $\nu$, the ground state
	$\chi \in H^{\frac{\alpha}{2}}_{\rm per}$ satisfies the stationary equation
	\begin{equation}
	\label{EL-last}
	D^{\alpha} \chi + c \chi + \nu = \mu \chi^3.
	\end{equation}
	Lagrange multipliers satisfy two relations due to the constraints in (\ref{infB-gen}):
	\begin{equation}
	\label{LM-last}
	\mu = 2 B_c(\chi) + 2 \pi m \nu, \quad \mu \int_{\mathbb{T}} \chi^3 dx = 2 \pi (c m + \nu).
	\end{equation}
	Eliminating $\nu$ yields
	\begin{equation}
	\label{LM-last-relation}
	\left[ 1 - m \int_{\mathbb{T}} \chi^3 dx \right] \mu = 2 \left[ B_c(\chi) - \pi c m^2 \right].
	\end{equation}
	The left-hand side of (\ref{LM-last-relation}) can be rewritten in the equivalent symmetrized form:
	\begin{eqnarray*}
		1 - m \int_{\mathbb{T}} \chi^3 dx & = & \frac{1}{2\pi} \left[ \left( \int_{\mathbb{T}} dx \right)
		\left( \int_{\mathbb{T}} \chi^4 dx \right) - \left( \int_{\mathbb{T}} \chi dx \right) \left( \int_{\mathbb{T}} \chi^3 dx \right) \right] \\
		& = & \frac{1}{16 \pi} \int_{\mathbb{T}} \int_{\mathbb{T}}
		\left( \left[ \chi(x) - \chi(y) \right]^4 + 3 \left[ \chi^2(x) - \chi^2(y) \right]^2 \right) dx dy,
	\end{eqnarray*}
	from which it follows that it is strictly positive if $\chi(x)$ is not constant.
	Similarly, the right-hand side of (\ref{LM-last-relation}) is strictly positive if $\chi(x)$ is not constant due to the following inequality
	$$
	2 B_c(\chi) - 2 \pi c m^2 = \| D^{\frac{\alpha}{2}} \chi \|^2_{L^2} + c \| u - m \|^2_{L^2}
	\geq (1 + c) \| u - m \|^2_{L^2} > 0.
	$$
	Since the ground state is non-constant if $m \in (-m_0,m_0)$,
	we obtain the unique $\mu > 0$ from (\ref{LM-last}) such that the transformation $\psi = C \chi$ with $C := \sqrt{\mu}/\sqrt{2}$
	reduces (\ref{EL-last}) to the stationary equation (\ref{ode-stat})
	with $b = \nu \sqrt{\mu}/\sqrt{2}$, where $\nu$ is uniquely found from (\ref{LM-last}).
\end{proof}


\begin{thebibliography}{99}
	
	
	\bibitem{AKNS} M.J. Ablowitz, D.J. Kaup, A.C. Newell, and H. Segur, ``The inverse scattering transform – Fourier analysis for nonlinear problems", Stud. Appl. Math. {\bf 53} (1974), 249--315.
	
	\bibitem{Albert}  J.P. Albert, \textit{Positivity properties and stability of solitary-wave solutions of model equations for long waves}, Comm. PDE, \textbf{17} (1992),  1-22.
	
	
	
	\bibitem{Ambrosio} V. Ambrosio, \textit{On some convergence results for fractional periodic Sobolev spaces}, Opuscula Math. {\bf 40} (2020), 5--20.
	
	\bibitem{AP2} T.P. Andrade and A. Pastor, \textit{Orbital stability of one-parameter periodic traveling waves
		for dispersive equations and applications}, J. Math. Anal. Appl. {\bf 475} (2019), 1242--1275.
	
	\bibitem{angulo1} J. Angulo, \textit{Non-linear	stability of periodic travelling-wave solutions for
		the Schr\"odinger and modified Korteweg-de Vries equation}, J. Diff. Equat. {\bf 235} (2007), 1--30.
	
	\bibitem{A} J. Angulo, \textit{Stability properties of solitary waves for fractional KdV and BBM equations}, Nonlinearity {\bf 31} (2018), 920--956.
	
	
	
	\bibitem{AN1} J. Angulo and F. Natali,
	\textit{On the instability  of periodic waves for dispersive equations}, Diff. Int. Equat. {\bf 29} (2016), 837--874.
	
	
	
	
	\bibitem{Kalish} J.L. Bona and H. Kalisch, \textit{Singularity formation in the generalized Benjamin–Ono equation}, Disc.Contin.
	Dyn. Syst. {\bf 11} (2004), 27--45.
	
	\bibitem{bona} J.L. Bona, P.E. Souganidis and W.A. Strauss, \textit{Stability and instability of
		solitary waves of Korteweg-de Vries type}, Proc. Roy. Soc. London Ser. A {\bf 411} (1987), 395-412.
	
	
\bibitem{Bronski} J.C. Bronski and V.M. Hur, \textit{Modulational instability and variational structure}, Stud. Appl. Math. {\bf 132} (2014), 285--331.
	
\bibitem{C} H. Chen, \textit{Existence of periodic traveling-wave solutions of nonlinear, dispersive wave equations}, Nonlinearity {\bf 17} (2004), 2041--2056.
	
\bibitem{BC} H. Chen and J. Bona, \textit{Periodic travelling wave solutions of nonlinear dispersive evolution equations}, Discr. Cont. Dynam. Syst. {\bf 33} (2013), 4841--4873.
	
	\bibitem{CPgardner} J. Chen and D.E. Pelinovsky, ``Periodic travelling waves of the modified KdV equation and 
	rogue waves on the periodic background", J. Nonlin. Sci. {\bf 29} (2019), 2797--2843.
	
	\bibitem{CJ2019} K. Claasen and M. Johnson, \textit{Nondegeneracy and stability of antiperiodic bound states for fractional
		nonlinear Schr\"{o}dinger equations}, J. Diff. Eqs. {\bf 266} (2019), 5664--5712.
	
	\bibitem{CKSTT} J. Colliander, M. Keel, G. Staffilani, H. Takaoka, T. Tao, \textit{Sharp global well-posedness results for periodic and non-periodic KdV and modified KdV on $\mathbb{R}$ and $\mathbb{T}$}, J. Amer. Math. Soc., \textbf{16} (2003), 705-749.
	
	
	
	\bibitem{DK} B. Deconinck and T. Kapitula, \textit{On the spectral and orbital stability of spatially
		periodic stationary solutions of generalized Korteweg-de Vries equations}, in Hamiltonian Partial Diff. Eq. Appl.
	{\bf 75} (Fields Institute Communications, Springer, New York, 2015), 285--322.
	
	
	
	
	\bibitem{FL2013} R.L. Frank and E. Lenzmann, \textit{Uniqueness of non-linear ground states for fractional Laplacian in $\mathbb{R}$}, Acta Math. {\bf 210} (2013), 261--318.
	
	
	\bibitem{grillakis1} M. Grillakis, J. Shatah and W. Strauss, \textit{Stability theory of solitary waves in the presence of symmetry I},
	J. Funct. Anal. {\bf 74} (1987), 160--197.
	
	
	\bibitem{stefanov} S. Hakkaev and A.G. Stefanov, \textit{Stability of periodic waves for the fractional KdV and NLS equations}, arXiv: 1907.05149 (2019).
	
	\bibitem{haragus} M. H$\check{a}$r$\check{a}$gu\c{s} and  T. Kapitula, \textit{On the spectra of periodic waves for infinite-dimensional Hamiltonian systems},  Phys. D {\bf 237} (2008), 2649--2671.
		
	\bibitem{henry} D. Henry, J.F. Perez and W. Wreszinski, \textit{Stability theory for solitary-wave solutions of scalar field equation},
	Comm. Math. Phys. {\bf 85} (1982), 351--361.
	
	\bibitem{hur} V.M. Hur and M. Johnson, \textit{Stability of periodic traveling waves
		for nonlinear dispersive equations}, SIAM J. Math. Anal. {\bf 47} (2015), 3528--3554.
	
	
	
	\bibitem{J} M.A. Johnson, \textit{Stability of small periodic waves in fractional KdV-type equations},
	SIAM J. Math. Anal. {\bf 45} (2013), 3168--3293.
	
	\bibitem{KMR} C.E. Kenig, Y. Martel and L. Robbiano, \textit{Local well-posedness and blow-up in the energy space for a class of $L^2$ critical dispersion generalized Benjamin-Ono equations}, Ann. Inst. H. Poincaré, Anal. Non Lin., {\bf 28} (2011), 853-887.
	
	\bibitem{KT} C.E. Kenig and H. Takaoka, \textit{Global well posedness of the modified Benjamin-Ono equation with initial data in $H^{1/2}$}, IMRN, \textbf{2006} (2006), O95702. 
	
	
	
	\bibitem{lepeli} U. Le and D.E. Pelinovsky, \textit{Convergence of Petviashvili's method near periodic waves in the fractional Korteweg-de Vries equation},
	SIAM J. Math. Anal. (2019), in press, https://arxiv.org/abs/1809.02725.
	
	\bibitem{lin} Z. Lin, \textit{Instability of nonlinear dispersive solitary waves}, J. Funct. Anal. {\bf 255} (2008), 1091--1124.
	
	\bibitem{SautPilod} F. Linares, D. Pilod, and J.C. Saut, \textit{Dispersive perturbations of Burgers and hyperbolic
	equations I: Local theory}, SIAM J. Math. Anal. {\bf 46} (2014), 1505--1537.
	
	\bibitem{SautSoliton} F. Linares, D. Pilod, and J.C. Saut, \textit{Remarks on the orbital stability of ground
	state solutions of FKDV and related equations}, Adv. Diff. Eqs. {\bf 20} (2015), 835--858.
	
	
	
	
	\bibitem{Martel-Pilod} Y. Martel and D. Pilod, \textit{Construction of a minimal mass blow up solution of the modified Benjamin–Ono equation}, Math. Ann. \textbf{369}, (2017), 153–245.
	
	\bibitem{NPU} F. Natali, D. Pelinovsky and U. Le, \textit{New variational characterization
		of periodic waves in the fractional Korteweg--de Vries equation}, Nonlinearity, {\bf 33} (2020), 1956--1986.
	
	
	
	
	\bibitem{Pel-book} D.E. Pelinovsky, \textit{Localization in Periodic Potentials: from Schr\"{o}dinger Operators
		to the Gross--Pitaevskii Equation}, LMS Lecture Note Series {\bf 390} (Cambridge University Press, Cambridge, 2011).
	
	\bibitem{Pel} D.E. Pelinovsky, \textit{Spectral stability of nonlinear waves in KdV-type evolution equations},
	In \textit{Nonlinear Physical Systems: Spectral Analysis, Stability, and Bifurcations}
	(Edited by O.N. Kirillov and D.E. Pelinovsky) (Wiley-ISTE, NJ, 2014), 377--400.
	
	
	\bibitem{W1} M.I. Weinstein, \textit{Lyapunov stability of ground states of nonlinear dispersive evolution equations},
	Comm. Pure Appl. Math. {\bf 39} (1986), 51--68.
	
	\bibitem{W2} M.I. Weinstein, \textit{Modulational stability of ground states of nonlinear Schr\"odinger equations}, SIAM J. Math. Anal., {\bf 16} (1985), 472-491.
	
\bibitem{W3} M.I. Weinstein, \textit{Existence and dynamic stability of solitary wave solutions of equations arising in long
wave propagation}, Commun. Part. Differ. Equations {\bf 12} (1987) 1133--1173.

\bibitem{W4} M.I. Weinstein, \textit{Solitary waves of nonlinear dispersive evolution equations with critical power nonlinearities}
J. Differ. Equations {\bf 69} (1987) 192--203.
\end{thebibliography}
\end{document}